\newtheorem{theorem}{Theorem}[section]
\newtheorem{definition}[theorem]{Definition}
\newtheorem{example}[theorem]{Example}
\newtheorem{lemma}[theorem]{Lemma}
\begin{document}

\title[Generalized Marriage Theorem]{A Generalized Marriage Theorem}

\author{Thomas Fischer}
\address{Frankfurt am Main, Germany.}

\email{dr.thomas.fischer@gmx.de}

\date{}     


\begin{abstract}
We consider a set-valued mapping on a simple graph and ask for the existence of a disparate selection. 
The term disparate is defined in the paper and we present a sufficient and necessary condition for the 
existence of a disparate selection. This approach generalizes the classical marriage theorem of Hall. 
We define the disparate kernel of the set-valued mapping and provide calculation methods for the 
disparate kernel and a disparate selection. Our main theorem is applied to a result of Ryser on the 
completion of partially prepopulated Latin squares and we derive Hall's marriage theorem.
\end{abstract}


\keywords{coloring, proper list coloring, marriage theorem, set-valued mapping, 
                disparate selection, graph theory, alldifferent constraint, latin square, sudoku.}

\subjclass[2020]{Primary 05C15; Secondary 90C35}

\maketitle

                                        %
                                        %
\section{Introduction} \label{S:intro}
We consider a set-valued mapping defined on a simple graph into a finite set. We ask for the existence 
of a selection with a specific property which we call disparate. This model generalizes a classical result 
of P. Hall \cite{HalP} who considered the case when the domain is a complete graph (compare 
van Lint and Wilson \cite{vLW} for a definition of complete graph). Our model includes list coloring 
problems introduced among others by Erd{\"o}s et al. \cite{ERT}. Survey articles on recent results and 
applications of graph and list coloring problems are due to Formanowicz and Tana\'{s} \cite{FT} and 
Sankar et al. \cite{SFRN}. 

The main result of this paper is a sufficient and necessary condition for the existence of a disparate 
selection. This condition is described by the existence of a distributed Hall collection. A Hall collection 
is a condition which had been investigated by several authors using a different terminology and is 
known to be necessary for the existence of a disparate selection (see e.g. Hilton and Johnson \cite{HJ2},
Cropper \cite{Crop} or Bobga et al. \cite{Bob2}). The existence of a distributed Hall collection is still 
necessary for the existence of a disparate selection, but also sufficient.

We introduce the terms disparate, complement mappings and cascades. The definitions 
and proofs, in particular the definition of the complement mapping, use ideas of 
Halmos and Vaughan \cite{HV} in their proof of the marriage theorem.

Based on this characterization we introduce the disparate kernel of a set-valued mapping. The 
disparate kernel describes the submapping of all disparate selections of a set-valued mapping.
We provide a method for the calculation of the disparate kernel and a method for the calculation of 
a disparate selection.

Also based on this characterization we introduce a condition on the simple graph, such that the 
existence of a Hall collection is equivalent to the existence of a disparate selection. From this result 
we derive a result on prepopulated Latin squares of Ryser \cite{Rys} and the before mentioned result 
of P. Hall \cite{HalP}.

Finally in this section we collect some basic terms and notations. The symbol $\sharp$ denotes the 
number of elements (cardinality) of a finite set. The expression $X \times Y$ denotes the cartesian 
product of $X$ and $Y$. The set of positive integers is denoted by $\mathbb{N}$. 

For the concept of set-valued mappings compare Berge \cite{Ber}. We do not require $F$ to have 
nonempty image sets as a general condition. But the assumptions in our lemmas and theorems 
will imply that $F$ has nonempty image sets.

For a set-valued mapping $F: X \longrightarrow 2^Y$ and a given set $W \subset X$ we define 
$F(W) = \cup_{x \in W} F(x)$. The restriction of $F$ on a subset $W \subset X$ is denoted by 
$F_{\mid W}$. The graph of $F$ is defined by $G(F) = \{(x,y) \in X \times Y \mid y \in F(x) \}$. 
A submapping of $F$ is another set-valued mapping $F^\prime: X \longrightarrow 2^Y$ with the 
property $F^\prime (x) \subset F(x)$ for each $x \in X$ which is written $G(F^\prime) \subset G(F)$. 

A selection for $F$ is a point-valued mapping $s: X \longrightarrow Y$ such that $s(x) \in F(x)$ for 
each $x \in X$. Given a submapping $F^\prime$ and a selection $s$ of $F$ such that 
$F^\prime(x) = \{s(x)\}$ we identify $s$ with the mapping $F^\prime$. In this sense $F^\prime$ 
is a point-valued mapping.

                                        %
                                        %
\section{Disparate Selections} \label{S:disparate}
Let $(X,E)$ be an undirected, simple graph with finite vertex set $X$ and edge set $E$. Let $Y$ be a 
finite set and let $F: X \longrightarrow 2^Y$ be a set-valued mapping.

\begin{definition} \label{D:disparate}
The points $(x,y), (x^\prime, y^\prime) \in X \times Y$ are called disparate if
$(x,y) \ne (x^\prime, y^\prime)$ and $y=y^\prime$ implies $\{x,x^\prime\} \notin E$.
\end{definition}

Two points $(x,y), (x^\prime, y^\prime) \in X \times Y$ are not disparate if and only if they are
identical or $y=y^\prime$ and $\{x,x^\prime\} \in E$. We extend the definition of disparate points 
to disparate sets.

\begin{definition} \label{D:disparateSet}
A set $A \subset X \times Y$ is called disparate if any two points $a, a^\prime \in A$, $a \ne a^\prime$, 
are disparate.
\end{definition}

We use the concept of disparate sets for the definition of disparate selections.

\begin{definition} \label{D:disparateSel}
A point-valued mapping $s: X \longrightarrow Y$ is called to be disparate if the graph $G(s)$ of $s$
is a disparate set.
\end{definition}

In this paper we ask for the existence of a disparate selection for the set-valued mapping $F$. The main 
result describes a condition which is sufficient and necessary for the existence of a disparate selection 
of $F$.

In the context of coloring problems (when $Y$ denotes the set of colors) a disparate selection is often 
called a proper coloring (e.g. van Lint and Wilson \cite{vLW}). A disparate selection $s$ for $F$ 
represents a proper coloring of the graph $(X,E)$.

The term ``disparate selection" is a generalization of the term ``complete system of distinct 
representatives" of P. Hall \cite{HalP}. Other authors used the terms ``alldifferent" resp. ``transversal". 
The term ``alldifferent" is widely used in the theory of constraint satisfaction problems. See 
Dechter and Rossi \cite{DR} for an overview on constraint satisfaction problems and see 
Hoeve \cite{Hoe} for an overview on alldifferent constraints.

                                        %
                                        %
\section{The Complement Mapping} \label{S:complement}    
Based on disparate points we define the hull and the complement of a set in $X \times Y$. 
The hull and the complement are the basic prerequisites for the definition of the complement mapping.

\begin{definition} \label{D:hull}
The hull of a set $A \subset X \times Y$ is defined by 
\begin{align*}                                                   
hull(A) = \{ a \in X \times Y \mid 
& \mbox{ there exists $a^\prime \in A$  such that} \\
& \mbox{ $a$ and $a^\prime$ are not disparate } \}.
\end{align*}
\end{definition}

Elementary properties of the hull are $A \subset hull(A)$ and $A \subset B$ implies $hull(A) \subset hull(B)$
for any sets $A, B \subset X \times Y$.

\begin{definition} \label{D:complement}
The complement of a set $A \subset X \times Y$ is defined by 
\begin{align*}                                                   
compl(A) = \{ a \in X \times Y \mid 
& \mbox{ $a$ and $a^\prime$ are disparate } \\
& \mbox{ for each $a^\prime \in A$ } \}.
\end{align*}
\end{definition}

The hull and the complement of any set in $X \times Y$ describe a partition of $X \times Y$.

\begin{lemma} \label{L:32}
Let $A \subset X \times Y$. 
\begin{enumerate}[(i)] 
\item $hull(A) \cap compl(A) = \emptyset$.
\item $hull(A) \cup compl(A) = X \times Y$.
\end{enumerate}
\end{lemma}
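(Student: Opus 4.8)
The plan is to unwind the two definitions and observe that for a fixed point $a \in X \times Y$, exactly one of the following holds: either $a$ is not disparate from some $a' \in A$, or $a$ is disparate from every $a' \in A$. The first alternative is precisely the membership condition for $hull(A)$, and the second is precisely the membership condition for $compl(A)$. Since these two alternatives are the negation of one another (they differ only by swapping an existential quantifier for a universal quantifier over the negated predicate), every point of $X \times Y$ lies in exactly one of the two sets. This gives both claims at once.

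In more detail, for part (i) I would argue by contradiction: suppose $a \in hull(A) \cap compl(A)$. From $a \in hull(A)$ there is some $a' \in A$ with $a$ and $a'$ not disparate. From $a \in compl(A)$, the point $a$ is disparate from \emph{every} element of $A$, in particular from that same $a'$. This contradicts the choice of $a'$, so the intersection is empty. For part (ii), let $a \in X \times Y$ be arbitrary. If there exists $a' \in A$ such that $a$ and $a'$ are not disparate, then $a \in hull(A)$ by Definition~\ref{D:hull}. Otherwise, no such $a'$ exists, which means $a$ and $a'$ are disparate for every $a' \in A$, so $a \in compl(A)$ by Definition~\ref{D:complement}. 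In either case $a \in hull(A) \cup compl(A)$, and since $a$ was arbitrary the union is all of $X \times Y$. (The reverse inclusion $hull(A) \cup compl(A) \subset X \times Y$ is immediate since both sets are by definition subsets of $X \times Y$.)

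I do not anticipate a genuine obstacle here; the lemma is essentially the tautology ``$\exists a'\, \neg P(a,a')$'' versus ``$\forall a'\, P(a,a')$'' dressed up in the vocabulary of disparateness, where $P(a,a')$ is the relation ``$a$ and $a'$ are disparate.'' The only mild subtlety worth a sentence is the edge case $A = \emptyset$: then $hull(\emptyset) = \emptyset$ (no witness $a'$ can exist) while $compl(\emptyset) = X \times Y$ (the universal condition over an empty index set is vacuously true), so (i) and (ii) still hold. It may be worth noting this explicitly so the reader sees the partition degenerates correctly, but it requires no extra work.
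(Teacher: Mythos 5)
Your proof is correct and follows the same route as the paper, which simply observes that both claims follow directly from Definitions~\ref{D:hull} and~\ref{D:complement}; you have merely spelled out the quantifier negation explicitly. The remark about the degenerate case $A=\emptyset$ is accurate and harmless, though the paper does not bother with it.
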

\begin{proof}
The proof follows from Definitions \ref{D:hull} and \ref{D:complement}.
\end{proof}

Based on Lemma \ref{L:32} we state elementary properties of the complement. 
$A \cap compl(A) = \emptyset$ and $A \subset B$ implies $compl(B) \subset compl(A)$ for any sets 
$A, B \subset X \times Y$.

We obtain more properties of the complement.

\begin{lemma} \label{L:34}
Let $A \subset X \times Y$. $A \subset compl(compl(A))$.
\end{lemma}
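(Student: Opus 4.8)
The plan is to unwind the definition of $compl$ twice and reduce everything to the symmetry of the disparateness relation. First I would record the observation that being disparate is a symmetric relation on $X \times Y$: by Definition \ref{D:disparate}, the condition ``$(x,y) \ne (x^\prime,y^\prime)$ and $y = y^\prime$ implies $\{x,x^\prime\} \notin E$'' is manifestly unchanged under swapping the two points, since $\ne$, $=$ and membership in the unordered edge set $E$ are all symmetric. So $a$ and $a^\prime$ are disparate if and only if $a^\prime$ and $a$ are disparate.

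Next I would take an arbitrary $a \in A$ and show $a \in compl(compl(A))$. By Definition \ref{D:complement}, this means checking that $a$ and $c$ are disparate for every $c \in compl(A)$. Fix such a $c$. Again by Definition \ref{D:complement}, $c \in compl(A)$ means $c$ and $a^\prime$ are disparate for every $a^\prime \in A$; applying this with $a^\prime = a$ (which is legitimate since $a \in A$) gives that $c$ and $a$ are disparate. By the symmetry noted above, $a$ and $c$ are disparate. Since $c \in compl(A)$ was arbitrary, $a \in compl(compl(A))$, and since $a \in A$ was arbitrary, $A \subset compl(compl(A))$.

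There is essentially no obstacle here: the only ingredient beyond the definitions is the symmetry of ``disparate,'' which is immediate from Definition \ref{D:disparate}. (One could alternatively derive the claim from the monotonicity property ``$A \subset B$ implies $compl(B) \subset compl(A)$'' stated after Lemma \ref{L:32}, but that still ultimately rests on the same symmetry, so the direct argument above is cleanest.)
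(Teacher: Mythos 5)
Your proof is correct and follows essentially the same route as the paper's: unwind the definition of $compl$ twice and observe that membership of $a$ in $compl(compl(A))$ reduces to $a$ being disparate from every element of $compl(A)$, which holds because $a \in A$. Your explicit remark that disparateness is symmetric is a point the paper leaves implicit, but it does not change the argument.
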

\begin{proof}
By definition of the complement $a \in A$ implies $a, b$ are disparate for each $b \in compl(A)$. 
Again by definition of the complement $a \in compl(compl(A))$ if and only if $a, b$ are disparate 
for each $b \in compl(A)$. This shows the statement.
\end{proof}

\begin{lemma} \label{L:35}
Let $A, B \subset X \times Y$. The following statements are equivalent:
\begin{enumerate}[(i)] 
\item $A$ and $B$ are disparate sets and $B \subset compl(A)$.
\item $A \cap B = \emptyset$ and $A \cup B$ is disparate.
\end{enumerate}
\end{lemma}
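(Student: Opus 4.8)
The plan is to prove the equivalence by establishing both implications directly from the definitions, using the symmetry of the relation ``$a$ and $a'$ are disparate'' (which is immediate from Definition \ref{D:disparate}) and the characterization of non-disparate pairs given just after that definition.

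First I would prove (i)$\Rightarrow$(ii). Assume $A$ and $B$ are disparate sets and $B \subset compl(A)$. To see $A \cap B = \emptyset$: if some $a \in A \cap B$, then since $a \in B \subset compl(A)$, the point $a$ is disparate from every point of $A$, in particular from $a$ itself; but no point is disparate from itself (a point equals itself, violating the clause $(x,y)\ne(x',y')$ in Definition \ref{D:disparate}), a contradiction. Hence $A$ and $B$ are disjoint. To see $A \cup B$ is disparate, take distinct $c, c' \in A \cup B$; if both lie in $A$ they are disparate since $A$ is disparate, similarly if both lie in $B$; and if (say) $c \in A$, $c' \in B$, then $c' \in compl(A)$ gives that $c'$ is disparate from $c$. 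So every pair of distinct points in $A \cup B$ is disparate, i.e. $A \cup B$ is a disparate set.

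Next I would prove (ii)$\Rightarrow$(i). Assume $A \cap B = \emptyset$ and $A \cup B$ is disparate. Then $A$ and $B$, as subsets of the disparate set $A \cup B$, are themselves disparate sets. It remains to check $B \subset compl(A)$: let $b \in B$ and $a \in A$. Since $A \cap B = \emptyset$ we have $a \ne b$, so $a$ and $b$ are two distinct points of the disparate set $A \cup B$ and are therefore disparate. As this holds for every $a \in A$, Definition \ref{D:complement} gives $b \in compl(A)$, hence $B \subset compl(A)$.

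There is no real obstacle here; the only point requiring slight care is the use of disjointness to guarantee that the points $a$ and $b$ compared in the second implication are genuinely distinct, since ``disparate'' is defined only for pairs and carries the built-in requirement $(x,y) \ne (x',y')$ — without $A \cap B = \emptyset$ one could not conclude $b \in compl(A)$ in general. Everything else is a routine unwinding of Definitions \ref{D:disparate}, \ref{D:disparateSet} and \ref{D:complement} together with the symmetry of the disparateness relation.
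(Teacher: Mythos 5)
Your proposal is correct and follows essentially the same route as the paper's proof: disjointness from $B\subset compl(A)$ together with $A\cap compl(A)=\emptyset$, a case split on where the two distinct points of $A\cup B$ lie, and the reverse direction by unwinding Definition \ref{D:complement}. Your explicit remark that $A\cap B=\emptyset$ guarantees $a\ne b$ in the converse direction is a small detail the paper leaves implicit, but it is not a different argument.
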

\begin{proof}
``(i) $\Rightarrow$ (ii)"
$A \cap B = \emptyset$, since $A \cap B \subset A \cap compl(A)$. Let $v, w \in A \cup B$, $v \ne w$. 
W.l.o.g. we may assume $v \in A$ and $w \in B$. Using ``(i)", $w \in compl(A)$, i.e., $v$ and 
$w$ are disparate. \\
``(ii) $\Rightarrow$ (i)" 
Using ``(ii)", $A$ and $B$ are disparate sets. Let $b \in B$ and $a \in A$. Using ``(ii)", $a$ and $b$ 
are disparate, i.e., $b \in compl(A)$.
\end{proof}

We prove a decomposition lemma.

\begin{lemma} \label{L:decomposition1}
Let $W \subset X$, $V \subset X \backslash W$ and  $Z \subset X \times Y$.
\[
(W \cup V) \times Y 
= (W \times Y) \cup ((V \times Y) \cap hull(Z)) \cup ((V \times Y) \cap compl(Z))
\]
and $((W \times Y) \cup ((V \times Y) \cap hull(Z))) \cap ((V \times Y) \cap compl(Z)) = \emptyset$.
\end{lemma}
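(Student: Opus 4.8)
The plan is to prove the two asserted identities directly from Lemma~\ref{L:32}, which says that $hull(Z)$ and $compl(Z)$ partition $X \times Y$. Everything reduces to elementary set algebra once we intersect that partition with $V \times Y$ and combine with $W \times Y$.

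\textbf{The union identity.} First I would write $(W \cup V) \times Y = (W \times Y) \cup (V \times Y)$, which is immediate since $W$ and $V$ are subsets of $X$. Then, applying Lemma~\ref{L:32}(ii), $V \times Y = (V \times Y) \cap (X \times Y) = (V \times Y) \cap (hull(Z) \cup compl(Z)) = ((V \times Y) \cap hull(Z)) \cup ((V \times Y) \cap compl(Z))$ by distributivity. Substituting this expression for $V \times Y$ into the decomposition $(W \cup V) \times Y = (W \times Y) \cup (V \times Y)$ gives exactly the claimed three-term union. No hypothesis on $W, V$ beyond being subsets of $X$ is needed here; the condition $V \subset X \backslash W$ is only relevant for the disjointness claim.

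\textbf{The disjointness identity.} Here I would show that $W \times Y$ and $(V \times Y) \cap hull(Z)$ are each disjoint from $(V \times Y) \cap compl(Z)$, so their union is too. For the second piece this is immediate from Lemma~\ref{L:32}(i): $(V \times Y) \cap hull(Z)$ and $(V \times Y) \cap compl(Z)$ are both contained in $hull(Z)$ and $compl(Z)$ respectively, and $hull(Z) \cap compl(Z) = \emptyset$. For the first piece I would use the hypothesis $V \subset X \backslash W$, which gives $W \cap V = \emptyset$, hence $(W \times Y) \cap (V \times Y) = (W \cap V) \times Y = \emptyset$, and a fortiori $(W \times Y) \cap ((V \times Y) \cap compl(Z)) = \emptyset$. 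Combining, $\big((W \times Y) \cup ((V \times Y) \cap hull(Z))\big) \cap \big((V \times Y) \cap compl(Z)\big) = \emptyset$ by distributing the intersection over the union.

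There is no real obstacle here: the lemma is a bookkeeping statement that repackages the partition of $X \times Y$ from Lemma~\ref{L:32} in a form convenient for a later induction (presumably splitting the domain into a processed part $W$, a current vertex batch $V$, and sorting $V \times Y$ by whether points lie in the hull or complement of some accumulated disparate set $Z$). The only thing to be careful about is invoking $V \subset X \backslash W$ exactly where disjointness of $W$ and $V$ is used, and otherwise just chaining distributivity with the two clauses of Lemma~\ref{L:32}.
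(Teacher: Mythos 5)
Your proof is correct and follows exactly the route the paper takes: the paper's one-line proof cites the decomposition $(W \cup V) \times Y = (W \times Y) \cup (V \times Y)$ together with Lemma \ref{L:32}, which are precisely the two ingredients you chain together (with the disjointness of $W$ and $V$ supplying the remaining piece). You have simply written out in full the details the paper leaves implicit.
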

\begin{proof}
Both equations follow from $(W \cup V) \times Y = (W \times Y) \cup (V \times Y)$ and
Lemma \ref{L:32}.
\end{proof}

\begin{figure}
\includegraphics{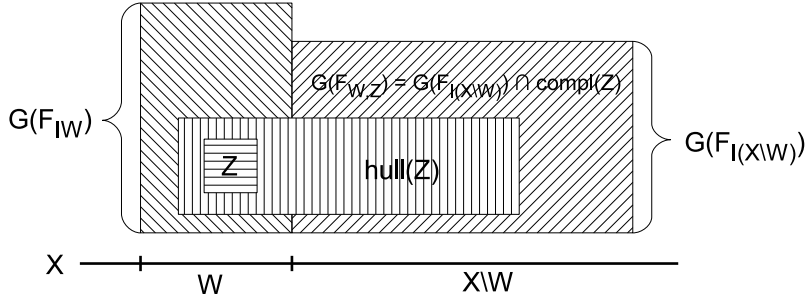}
\caption{The Complement Mapping} 
\label{Fig1}
\end{figure}

We introduce the complement mapping $F_{W,Z}: X \backslash W \longrightarrow 2^Y$ of $F$ 
depending on subsets $W \subset X$ and $Z \subset G(F_{\mid W})$. We define this mapping
based on a description of the graph
\[
G(F_{W,Z}) = G(F_{\mid (X \backslash W)}) \cap compl(Z).
\]
The definition of the complement mapping is illustrated in Fig. \ref{Fig1}. In particular, 
$F_{\emptyset , \emptyset} = F$. $F_{W,Z}$ is a submapping of $F_{\mid (X \backslash W)}$.  
In general, it can not be guaranteed that $F_{W,Z}$ has nonempty images. 

There are two main usages of the complement mapping in this paper. The first usage is where $W$ 
and $Z$ are singletons (compare Lemma \ref{L:complement1}). The second usage is illustrated in 
Lemma \ref{L:complement2} and Lemma \ref{L:H_W}.

\begin{lemma} \label{L:complement1}
Let $x \in X$ and $y \in F(x)$. The following statements are equivalent: 
\begin{enumerate}[(i)] 
\item $F$ admits a disparate selection $s$ such that $s(x)=y$.
\item $F_{\{x\},\{(x,y)\}}$ admits a disparate selection.
\end{enumerate}
\end{lemma}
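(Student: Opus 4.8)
The plan is to prove the two directions of the equivalence directly from the definition of the complement mapping $G(F_{\{x\},\{(x,y)\}}) = G(F_{\mid X\setminus\{x\}}) \cap compl(\{(x,y)\})$.

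\textbf{From (i) to (ii).} Suppose $s$ is a disparate selection for $F$ with $s(x)=y$. Let $s^\prime = s_{\mid X\setminus\{x\}}$ be the restriction of $s$ to $X\setminus\{x\}$, viewed as a point-valued mapping. I claim $s^\prime$ is a disparate selection for $F_{\{x\},\{(x,y)\}}$. First, $s^\prime$ is clearly disparate, since its graph $G(s^\prime) \subset G(s)$ and a subset of a disparate set is disparate. It remains to check that $s^\prime$ is a selection for $F_{\{x\},\{(x,y)\}}$, i.e., that $(x^\prime, s^\prime(x^\prime)) \in G(F_{\mid X\setminus\{x\}}) \cap compl(\{(x,y)\})$ for every $x^\prime \in X\setminus\{x\}$. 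Membership in $G(F_{\mid X\setminus\{x\}})$ is immediate from $s^\prime(x^\prime) = s(x^\prime) \in F(x^\prime)$. Membership in $compl(\{(x,y)\})$ means that $(x^\prime, s^\prime(x^\prime))$ and $(x,y)$ are disparate; but both points lie in $G(s)$ and $x^\prime \ne x$, so by Definition \ref{D:disparateSet} they are disparate.

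\textbf{From (ii) to (i).} Suppose $t$ is a disparate selection for $F_{\{x\},\{(x,y)\}}$. Define $s: X \longrightarrow Y$ by $s(x) = y$ and $s(x^\prime) = t(x^\prime)$ for $x^\prime \in X\setminus\{x\}$. Since $y \in F(x)$ by hypothesis and $t(x^\prime) \in F_{\{x\},\{(x,y)\}}(x^\prime) \subset F(x^\prime)$ (as $F_{\{x\},\{(x,y)\}}$ is a submapping of $F_{\mid X\setminus\{x\}}$), $s$ is a selection for $F$. To see $s$ is disparate, note $G(s) = \{(x,y)\} \cup G(t)$. I will apply Lemma \ref{L:35} with $A = \{(x,y)\}$ and $B = G(t)$: singletons are trivially disparate, $G(t)$ is disparate by hypothesis, and $B \subset compl(A)$ because every $(x^\prime, t(x^\prime)) \in G(F_{\{x\},\{(x,y)\}}) \subset compl(\{(x,y)\})$. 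Hence by Lemma \ref{L:35}, $A \cap B = \emptyset$ and $A \cup B = G(s)$ is disparate, so $s$ is a disparate selection for $F$ with $s(x) = y$.

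The argument is essentially a bookkeeping exercise and I do not anticipate a serious obstacle; the only point requiring a little care is making sure the complement is taken with respect to the right set (a singleton $\{(x,y)\}$, not a larger set) so that $compl(\{(x,y)\})$ excludes exactly those points clashing with $(x,y)$, which is precisely what Lemma \ref{L:35} needs. One could also avoid invoking Lemma \ref{L:35} in the second direction and argue directly, checking the three cases for a pair of distinct points of $G(s)$ (both in $G(t)$; one equal to $(x,y)$ and the other in $G(t)$; the degenerate case being vacuous), but routing through Lemma \ref{L:35} keeps the write-up shorter.
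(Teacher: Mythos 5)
Your proof is correct and follows the same route as the paper: restrict $s$ to $X\setminus\{x\}$ for one direction, and extend a selection of the complement mapping by $s(x)=y$ for the other, with disparateness in the second direction following from $G(t)\subset compl(\{(x,y)\})$ (the paper summarizes this as ``by definition of the complement mapping''; your appeal to Lemma \ref{L:35} just makes that step explicit). No gaps.
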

\begin{proof} 
``(i) $\Rightarrow$ (ii)" Consider $s_{\mid (X \backslash \{x\})}$ and the definition of the 
complement mapping 
$G(F_{\{x\},\{(x,y)\}})=G(F_{\mid (X \backslash \{x\})}) \cap compl(\{(x,y)\})$.  \\
``(ii) $\Rightarrow$ (i)" Let $s^\prime$ be a disparate selection of $F_{\{x\},\{(x,y)\}}$. Define
a selection $s$ of $F$ by
\[
s(z) =
\begin{cases}
y, & \mbox{ if }z = x, \\
s^\prime (x), & \mbox{ if }z \ne x
\end{cases}
\]
for each $z \in X$. By definition of the complement mapping $s$ is a disparate selection of $F$ and 
$s(x) = y$.
\end{proof}

\begin{lemma} \label{L:complement2}
Let $W \subset X$ be a nonempty set and $Z \subset G(F_{\mid W})$. Let $s$ be a selection of $F$ 
such that $s_{\mid W}$ is disparate, $G(s_{\mid W}) \subset Z$, $s_{\mid (X \backslash W)}$ is 
disparate and $s_{\mid (X \backslash W)}$ is a selection of $F_{W,Z}$. $s$ is a disparate selection 
of $F$.
\end{lemma}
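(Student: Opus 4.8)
The plan is to reduce the claim to Lemma~\ref{L:35} applied to the two pieces $A = G(s_{\mid W})$ and $B = G(s_{\mid (X \backslash W)})$ of the graph of $s$, using that $G(s) = A \cup B$.

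First I would unpack what each hypothesis gives. The hypothesis that $s_{\mid W}$ is disparate says precisely (by Definition~\ref{D:disparateSel}) that $A$ is a disparate set, and likewise $s_{\mid (X \backslash W)}$ disparate says $B$ is a disparate set. The hypothesis $G(s_{\mid W}) \subset Z$ is just $A \subset Z$. Finally, since $s_{\mid (X \backslash W)}$ is a selection of $F_{W,Z}$, we have $B = G(s_{\mid (X \backslash W)}) \subset G(F_{W,Z})$, and by the defining equation $G(F_{W,Z}) = G(F_{\mid (X \backslash W)}) \cap compl(Z)$ this forces $B \subset compl(Z)$.

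Next I would combine $A \subset Z$ with the anti-monotonicity of the complement recorded after Lemma~\ref{L:32} (namely $A \subset B$ implies $compl(B) \subset compl(A)$) to obtain $compl(Z) \subset compl(A)$, hence $B \subset compl(A)$. Now all the hypotheses of statement~(i) of Lemma~\ref{L:35} hold for the pair $(A,B)$: both $A$ and $B$ are disparate sets and $B \subset compl(A)$. Lemma~\ref{L:35} then yields statement~(ii): $A \cap B = \emptyset$ and $A \cup B$ is disparate. Since $A \cup B = G(s)$, the graph of $s$ is disparate, so $s$ is a disparate selection of $F$ by Definition~\ref{D:disparateSel}, which completes the proof.

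I do not expect a genuine obstacle here; the only point needing care is the bookkeeping that passes from ``$s_{\mid (X \backslash W)}$ is a selection of $F_{W,Z}$'' to ``$B \subset compl(Z)$'' via the defining intersection of the complement mapping, together with correctly invoking the order\emph{-reversing} (rather than monotone) behaviour of $compl$. An alternative, more pedestrian route would bypass Lemma~\ref{L:35} and argue directly on pairs: given distinct $(x,s(x)),(x',s(x')) \in G(s)$ with $x \ne x'$, split into the cases $x,x' \in W$, $x,x' \in X \backslash W$, and the mixed case; the first two are immediate from the disparateness of the two restrictions, while in the mixed case $(x,s(x)) \in Z$ and $(x',s(x')) \in compl(Z)$, so the two points are disparate by the definition of $compl$. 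I would present the Lemma~\ref{L:35}-based argument as the cleaner one.
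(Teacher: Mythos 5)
Your proposal is correct and follows exactly the paper's route: the paper's proof is the one-line instruction to apply Lemma~\ref{L:35} with $A=G(s_{\mid W})$ and $B=G(s_{\mid (X \backslash W)})$, and your write-up simply fills in the (correct) bookkeeping showing $B \subset compl(Z) \subset compl(A)$ so that statement~(i) of that lemma applies.
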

\begin{proof}
Apply Lemma \ref{L:35} with $A=G(s_{\mid W})$ and $B=G(s_{\mid (X \backslash W)})$.
\end{proof}

                                        %
                                        %
\section{Hall Collections} \label{S:collection}
We introduce Hall collections of $F$ and split the set $X$ into subsets $W$ and $X \backslash W$. 
We define submappings of $F_{\mid W}$ and $F_{\mid (X \backslash W)}$ and derive Hall 
collections of these submappings.

Let $A_V$ be a subset of $G(F_{\mid V})$ for each nonempty set $V \subset X$. We call 
\[
\{ A_V \subset G(F_{\mid V}) \mid V \subset X, V \ne \emptyset \}
\]
a collection of $F$, i.e., in a collection of $F$ we associate to each nonempty set $V \subset X$ a 
subset $A_V$ of $G(F_{\mid V})$.
              
Let $\mathcal{H} = \{ A_V \subset G(F_{\mid V}) \mid V \subset X, V \ne \emptyset \}$ be a
collection of $F$ and $W \subset X$. We derive two other collections from $\mathcal{H}$. The 
restriction of $\mathcal{H}$ on $W$ is defined by
\[
\mathcal{H}_{\mid W} = \{ A_V \in \mathcal{H} \mid V \subset W, V \ne \emptyset \}.
\] 
$\mathcal{H}_{\mid W}$ is a collection of $F_{\mid W}$. We derive a second collection from 
$\mathcal{H}$ by
\[
\mathcal{H}_W = 
\{ A_{W \cup V} \cap compl(\bigcup_{A \in\mathcal{H}_{\mid W}}A) \cap (V \times Y)  
\mid V \subset X \backslash W, V \ne \emptyset \}.
\]                         
$\mathcal{H}_W$ is a collection of $F_{\mid (X \backslash W)}$. Please note, in the definition 
$\mathcal{H}_W$ and also in Definition \ref{D:distributed1} we consider the collection 
$\mathcal{H}_{\mid W}$ (with sets $A_V$, $V \subset W$) and sets $A_{W \cup V}$ (where
$W \cup V$ contains $W$). We do not consider sets $A_V$ where $V \subset X \backslash W$.
These definitions reflect ideas used by Halmos and Vaughan \cite{HV} in their proof of the 
marriage theorem.

Let $\mathcal{H} = \{ A_V \subset G(F_{\mid V}) \mid V \subset X, V \ne \emptyset \}$ be a
collection of $F$ and let $W \subset X$ be a nonempty set. In the next step we consider a submapping 
of $F_{\mid W}$ and show that $\mathcal{H}_{\mid W}$ is a collection of this submapping.
We define a submapping $F^{\mathcal{H}_W}$ of $F_{\mid W}$ by
\[
G(F^{\mathcal{H}_W}) = G(F_{\mid W}) \cap compl(\bigcup_{A \in \mathcal{H}_W}A).
\] 

\begin{lemma} \label{L:HmidW}
Let $\mathcal{H}$ be a collection of $F$ and let $W \subset X$ be a nonempty set. 
$\mathcal{H}_{\mid W}$ is a collection of $F^{\mathcal{H}_W}$.
\end{lemma}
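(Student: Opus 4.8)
To show that $\mathcal{H}_{\mid W}$ is a collection of $F^{\mathcal{H}_W}$, I have to verify the defining property of a collection: for every nonempty $V \subset W$, the set $A_V \in \mathcal{H}_{\mid W}$ satisfies $A_V \subset G(F^{\mathcal{H}_W}_{\mid V})$. Unwinding the definition of the restriction of a set-valued mapping and the definition of $F^{\mathcal{H}_W}$, the graph $G(F^{\mathcal{H}_W}_{\mid V})$ equals $G(F_{\mid V}) \cap compl(\bigcup_{A \in \mathcal{H}_W}A)$ (intersecting $G(F_{\mid W})$ with $V \times Y$ and noting $V \subset W$). So the claim reduces to the two inclusions $A_V \subset G(F_{\mid V})$ and $A_V \subset compl(\bigcup_{A \in \mathcal{H}_W}A)$.

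The first inclusion $A_V \subset G(F_{\mid V})$ is immediate: since $\mathcal{H}$ is a collection of $F$ and $V \ne \emptyset$, by definition $A_V \subset G(F_{\mid V})$, and membership of $A_V$ in $\mathcal{H}_{\mid W}$ requires $V \subset W$, so nothing changes. The real work is the second inclusion. I need to show that every point $(x,y) \in A_V$ is disparate from every point in every member of $\mathcal{H}_W$. Fix such a point $(x,y)$; note $x \in V \subset W$. Fix a member of $\mathcal{H}_W$; by definition it has the form $A_{W \cup U} \cap compl(\bigcup_{A \in \mathcal{H}_{\mid W}}A) \cap (U \times Y)$ for some nonempty $U \subset X \backslash W$. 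Take any point $(x',y')$ in this set. Then in particular $(x',y') \in compl(\bigcup_{A \in \mathcal{H}_{\mid W}}A)$, which means $(x',y')$ is disparate from every point of $\bigcup_{A \in \mathcal{H}_{\mid W}}A$. Since $A_V \in \mathcal{H}_{\mid W}$, the point $(x,y)$ lies in that union, so $(x,y)$ and $(x',y')$ are disparate. As $(x,y)$ and the member of $\mathcal{H}_W$ were arbitrary, this gives $A_V \subset compl(\bigcup_{A \in \mathcal{H}_W}A)$.

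The one point requiring a little care — and the only place I expect a potential subtlety — is the identification of $G(F^{\mathcal{H}_W}_{\mid V})$ with $G(F_{\mid V}) \cap compl(\bigcup_{A \in \mathcal{H}_W}A)$; one must use $V \subset W$ so that restricting $F^{\mathcal{H}_W}$ (a mapping on $W$) to $V$ simply intersects its graph with $V \times Y$, and $G(F_{\mid W}) \cap (V \times Y) = G(F_{\mid V})$. Everything else is a direct unwinding of Definitions \ref{D:complement} and of the collection notions, together with the observation that membership in $compl(\cdot)$ is the single feature of points in $\mathcal{H}_W$ that we exploit. No appeal to Lemma \ref{L:34} or Lemma \ref{L:35} is needed; the argument is purely at the level of the defining properties.
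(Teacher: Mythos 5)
Your proof is correct and follows essentially the same route as the paper: the key observation in both is that every member of $\mathcal{H}_W$ lies in $compl(\bigcup_{A \in \mathcal{H}_{\mid W}}A)$, which by the symmetry of disparateness forces $\bigcup_{A \in \mathcal{H}_{\mid W}}A \subset compl(\bigcup_{A \in \mathcal{H}_W}A)$. You merely unwind this pointwise instead of invoking Lemma \ref{L:34} together with the antitonicity of $compl$, and your explicit handling of the restriction to $V \subset W$ is a welcome bit of extra care.
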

\begin{proof}
By definition $\mathcal{H}_{\mid W}$ is a collection of $F_{\mid W}$. From the definition of 
$\mathcal{H}_W$ we obtain the relation 
$\bigcup_{A \in\mathcal{H}_W}A \subset compl(\bigcup_{A \in\mathcal{H}_{\mid W}}A)$. 
Using the definition of 
$F^{\mathcal{H}_W}$
\[
\bigcup_{A \in\mathcal{H}_{\mid W}}A
\subset compl(compl(\bigcup_{A \in\mathcal{H}_{\mid W}}A))
\subset compl(\bigcup_{A \in \mathcal{H}_W}A)
\subset G(F^{\mathcal{H}_W}),
\]
i.e., $\mathcal{H}_{\mid W}$ is a collection of $F^{\mathcal{H}_W}$.
\end{proof}

We consider the complement mapping $F_{W, G(F^{\mathcal{H}_W})}$ as a submapping of 
$F_{\mid (X \backslash W)}$ and show that $\mathcal{H}_W$ is a collection of this submapping.

\begin{lemma} \label{L:H_W}
Let $\mathcal{H}$ be a collection of $F$ and let $W \subset X$ be a nonempty set. $\mathcal{H}_W$ 
is a collection of the complement mapping $F_{W, G(F^{\mathcal{H}_W})}$.
\end{lemma}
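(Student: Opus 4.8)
The plan is to unwind the three definitions involved—$\mathcal{H}_W$, the restriction $F_{\mid(X\backslash W)}$, and the complement mapping $F_{W,Z}$ with $Z = G(F^{\mathcal{H}_W})$—and check that each member set of $\mathcal{H}_W$ is contained in the graph of that complement mapping. Concretely, a generic member of $\mathcal{H}_W$ is
\[
B_V = A_{W\cup V}\cap compl\Bigl(\bigcup_{A\in\mathcal{H}_{\mid W}}A\Bigr)\cap(V\times Y)
\]
for a nonempty $V\subset X\backslash W$, and I must show $B_V\subset G(F_{W,Z})=G(F_{\mid(X\backslash W)})\cap compl(Z)$ where $Z=G(F^{\mathcal{H}_W})=G(F_{\mid W})\cap compl(\bigcup_{A\in\mathcal{H}_W}A)$.

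First I would dispatch the easy containment $B_V\subset G(F_{\mid(X\backslash W)})$: since $A_{W\cup V}\subset G(F_{\mid(W\cup V)})$ and we intersect with $V\times Y$ where $V\subset X\backslash W$, every point $(x,y)\in B_V$ has $x\in V\subset X\backslash W$ and $y\in F(x)$, hence $(x,y)\in G(F_{\mid(X\backslash W)})$. Second, and this is the substantive step, I would show $B_V\subset compl(Z)$. Since $Z=G(F^{\mathcal{H}_W})\subset compl(\bigcup_{A\in\mathcal{H}_W}A)$, by the antitone property of $compl$ (stated right after Lemma \ref{L:32}) together with Lemma \ref{L:34} it suffices to show $B_V\subset compl(compl(\bigcup_{A\in\mathcal{H}_W}A))$, for which by Lemma \ref{L:34} it is enough to show $B_V\subset\bigcup_{A\in\mathcal{H}_W}A$. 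But $B_V$ is literally one of the sets comprising $\mathcal{H}_W$, so $B_V\subset\bigcup_{A\in\mathcal{H}_W}A$ is immediate. Chaining the inclusions, $B_V\subset\bigcup_{A\in\mathcal{H}_W}A\subset compl(compl(\bigcup_{A\in\mathcal{H}_W}A))\subset compl(Z)$, the last step because $Z\subset compl(\bigcup_{A\in\mathcal{H}_W}A)$ and $compl$ reverses inclusions.

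Combining the two containments gives $B_V\subset G(F_{\mid(X\backslash W)})\cap compl(Z)=G(F_{W,Z})$ for every nonempty $V\subset X\backslash W$, which is exactly the assertion that $\mathcal{H}_W$ is a collection of $F_{W,G(F^{\mathcal{H}_W})}$. The main obstacle, such as it is, is bookkeeping: keeping straight which complement is being applied to which union and using Lemmas \ref{L:34} and \ref{L:32} (via its corollary that $compl$ is antitone) in the right order, rather than any genuine difficulty—the argument is a short chase through the definitions and the elementary properties of $compl$ already established. One could also observe that this lemma is essentially dual in structure to Lemma \ref{L:HmidW}, where the analogous chain $\bigcup_{A\in\mathcal{H}_{\mid W}}A\subset compl(compl(\bigcup_{A\in\mathcal{H}_{\mid W}}A))\subset compl(\bigcup_{A\in\mathcal{H}_W}A)\subset G(F^{\mathcal{H}_W})$ was used; here the roles of $\mathcal{H}_{\mid W}$ and $\mathcal{H}_W$ are swapped and one extra intersection with $V\times Y$ and with $G(F_{\mid(X\backslash W)})$ must be tracked.
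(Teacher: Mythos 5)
Your proposal is correct and follows essentially the same route as the paper: the key chain $\bigcup_{A\in\mathcal{H}_W}A\subset compl(compl(\bigcup_{A\in\mathcal{H}_W}A))\subset compl(G(F^{\mathcal{H}_W}))$ via Lemma \ref{L:34} and the antitonicity of $compl$ is exactly the paper's argument. The only cosmetic difference is that you verify $B_V\subset G(F_{\mid(X\backslash W)})$ explicitly, whereas the paper simply invokes the earlier observation that $\mathcal{H}_W$ is a collection of $F_{\mid(X\backslash W)}$.
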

\begin{proof}
Using the definition of $F^{\mathcal{H}_W}$,
\[
\bigcup_{A \in \mathcal{H}_W}A
\subset compl(compl(\bigcup_{A \in \mathcal{H}_W}A))
\subset compl(G(F^{\mathcal{H}_W})).
\]
By definition of the complement mapping 
\[
A 
\subset G(F_{\mid (X \backslash W)}) \cap compl(G(F^{\mathcal{H}_W}))
= G(F_{W,G(F^{\mathcal{H}_W})}) 
\]
for each $A \in \mathcal{H}_W$.
\end{proof}

At this step we introduce an additional condition on collections and call them Hall collections.

\begin{definition} \label{D:collection}
A collection $\mathcal{H}=\{ A_V \subset G(F_{\mid V}) \mid V \subset X, V \ne \emptyset \}$ 
of $F$ is called a Hall collection of $F$ if $A_V$ is a disparate set and $\sharp A_V \ge \sharp V$ for 
each nonempty set $V \subset X$.
\end{definition}

Several authors proposed definitions which are equivalent to Hall collections. Hilton and Johnson 
\cite{HJ2}) considered the maximum size of a transversal. Cropper \cite{Crop} considered the size 
of the largest independent set of vertices and Bobga et al. \cite{Bob2} considered the vertex independence 
number. These authors observed that their condition is necessary but not sufficient for the existence 
of a proper coloring.

We consider three examples depicted in Fig. \ref{Fig2}. The set $X$ consists in all cases of the 
vertices $\{1, 2, 3, 4\}$ and are indicated by circled numbers. The edges are drawn in the graphics.
The values of the mapping $F$ are drawn close to the vertices. In all cases we choose 
$W=\{1,3,4\}$, which is a generalized critical set of $F$ (compare Definition \ref{D:critical}).

The most right example is an extraction and relabeling of the Sudoku cells $\{(1,1), (2,2), (3,2), (8,2)\}$
(in the notation $(row,column)$) of Provan \cite[Table 2]{Pro}. The other examples are modifications of 
this example.

\begin{example} \label{E:41}
We consider Example a) of Fig. \ref{Fig2} and define a Hall collection $\mathcal{H}$ of $F$ by  \\
$A_{\{1\}}=\{(1,2)\}$, $A_{\{2\}}=\{(2,1)\}$, $A_{\{3\}}=\{(3,2)\}$, $A_{\{4\}}=\{(4,3)\}$,  \\
$A_{\{1,2\}}=\{(1,2),(2,1)\}$, $A_{\{1,3\}}=\{(1,3),(3,1)\}$, $A_{\{1,4\}}=\{(1,2),(4,1)\}$,  \\
$A_{\{2,3\}}=\{(2,1),(3,2)\}$, $A_{\{2,4\}}=\{(2,1),(4,3)\}$, $A_{\{3,4\}}=\{(3,2),(4,3)\}$,  \\
$A_{\{1,2,3\}}=\{(1,3),(2,1),(3,2)\}$, $A_{\{1,2,4\}}=\{(1,2),(2,1),(4,3)\}$,   \\
$A_{\{1,3,4\}}=\{(1,3),(3,1),(4,1)\}$, $A_{\{2,3,4\}}=\{(2,1),(3,2),(4,3)\}$ and  \\
$A_{\{1,2,3,4\}}=\{(1,2),(1,3),(3,1),(4,1)\}$. Then $\mathcal{H}_W=\{\emptyset\}$ is a collection 
of $F_{\mid (X \backslash W)}=F_{\mid \{2\}}$, but $\mathcal{H}_W$ is not a Hall collection of 
$F_{\mid (X \backslash W)}$. It can be easily verified that $F$ does not admit a disparate selection.
\end{example}

\begin{figure}
\includegraphics{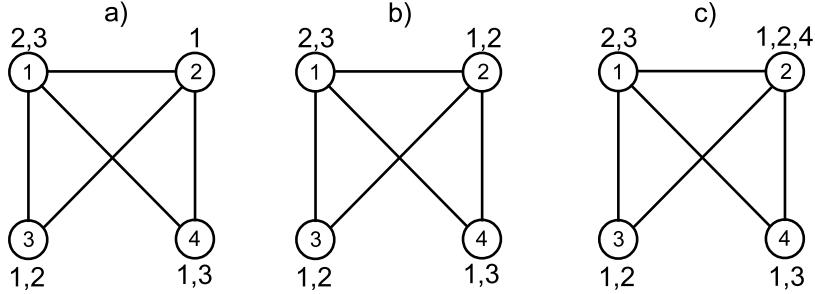}
\caption{Hall Collections} 
\label{Fig2}
\end{figure}

\begin{example} \label{E:42}
In Example b) $F$ admits a disparate selection $s$ given by $s(1)=\{3\}$, $s(2)=\{2\}$, 
$s(3)=\{1\}$, $s(4)=\{1\}$. This selection induces a Hall collection $\mathcal{H}^s$ according 
to Section \ref{S:necessity}. $\mathcal{H}^s_W=\{\{(2,2)\}\}$ and 
$G(F^{\mathcal{H}^s_W})=G(F_{\mid W}) \backslash \{(3,2)\}$.
\end{example}

\begin{example} \label{E:43}
In Example c) $F$ admits at least two disparate selections $s_1$ and $s_2$ given by 
$s_1(1)=\{3\}$, $s_1(2)=\{2\}$, $s_1(3)=\{1\}$, $s_1(4)=\{1\}$ and $s_2(1)=\{2\}$, 
$s_2(2)=\{4\}$, $s_2(3)=\{1\}$, $s_2(4)=\{1\}$. Again these selections induce Hall collections 
$\mathcal{H}^{s_1}$ and $\mathcal{H}^{s_2}$. $\mathcal{H}^{s_1}_W=\{\{(2,2)\}\}$, 
$\mathcal{H}^{s_2}_W=\{\{(2,4)\}\}$, 
$G(F^{\mathcal{H}^{s_1}_W})=G(F_{\mid W}) \backslash \{(3,2)\}$ and
$G(F^{\mathcal{H}^{s_2}_W})=G(F_{\mid W})$.
\end{example}

It is immediately clear that $\mathcal{H}_{\mid W}$ is a Hall collection of $F_{\mid W}$ and
that $\mathcal{H}_W$ is a collection of disparate sets if $\mathcal{H}$ is a Hall collection of $F$. 
At this moment we do not know if $\mathcal{H}_W$ is also a Hall collection. 

In the last step of this section we implement a condition on $\mathcal{H}$ such that $\mathcal{H}_W$ 
is a Hall collection.

\begin{definition} \label{D:distributed1}
Let $W \subset X$ be a nonempty set. A collection 
$\mathcal{H}=\{A_V \subset G(F_{\mid V}) \mid V \subset X, V \ne \emptyset \}$ of $F$ is called 
$W$-distributed if 
\[
\sharp (A_{W \cup V} 
\cap ((W \times Y) \cup (hull(\bigcup_{A \in\mathcal{H}_{\mid W}}A) \cap (V \times Y))))
\le \sharp W
\]
for each nonempty set $V \subset X \backslash W$. 
\end{definition}

\begin{example} \label{E:44}
We consider the Hall collection $\mathcal{H}$ of Example \ref{E:41} and observe that $\mathcal{H}$
is not $W$-distributed. Choose $V= X \backslash W = \{2\}$, consider 
$A_{W \cup V} = A_{\{1,2,3,4\}} \subset W \times Y$ and
$hull(\bigcup_{A \in \mathcal{H}_{\mid W}} A) = G(F)$.
The expression of Definition \ref{D:distributed1} reads as $\sharp A_{\{1,2,3,4\}} = 4 > 3 = \sharp W$.
In this example there does not exist a $W$-distributed Hall collection of $F$.
\end{example}

Under the assumption of Definition \ref{D:distributed1}, it is possible to show that $\mathcal{H}_W$ 
is a Hall collection.

\begin{lemma} \label{L:distributed1}
Let $\mathcal{H}$ be a Hall collection of $F$, let $W \subset X$ be a nonempty set and let 
$\mathcal{H}$ be $W$-distributed. $\mathcal{H}_W$ is a Hall collection of the complement mapping 
$F_{W, G(F^{\mathcal{H}_W})}$.
\end{lemma}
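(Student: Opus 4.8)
The plan is to check directly that $\mathcal{H}_W$ satisfies the two defining requirements of a Hall collection: every member is a disparate set, and the member associated with a nonempty $V \subset X \backslash W$ has cardinality at least $\sharp V$. That $\mathcal{H}_W$ is a collection of the complement mapping $F_{W, G(F^{\mathcal{H}_W})}$ is already supplied by Lemma \ref{L:H_W}, so only these two points remain.

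The disparateness is immediate: the member of $\mathcal{H}_W$ attached to $V$ is a subset of $A_{W \cup V}$, and $A_{W \cup V}$ is disparate because $\mathcal{H}$ is a Hall collection; a subset of a disparate set is disparate.

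For the cardinality bound I would write $Z = \bigcup_{A \in \mathcal{H}_{\mid W}} A$ and apply the decomposition of Lemma \ref{L:decomposition1} for this $Z$, intersected with $A_{W \cup V}$. Since $A_{W \cup V} \subset G(F_{\mid (W \cup V)}) \subset (W \cup V) \times Y$, this yields the disjoint decomposition
\[
A_{W \cup V} = \big( A_{W \cup V} \cap ((W \times Y) \cup (hull(Z) \cap (V \times Y))) \big) \cup \big( A_{W \cup V} \cap compl(Z) \cap (V \times Y) \big),
\]
in which the second block is precisely the member of $\mathcal{H}_W$ attached to $V$. Taking cardinalities, using that $\mathcal{H}$ is a Hall collection, so that $\sharp A_{W \cup V} \ge \sharp (W \cup V) = \sharp W + \sharp V$ (the union being disjoint since $V \subset X \backslash W$), together with the hypothesis that $\mathcal{H}$ is $W$-distributed, so that $\sharp \big( A_{W \cup V} \cap ((W \times Y) \cup (hull(Z) \cap (V \times Y))) \big) \le \sharp W$, I obtain
\[
\sharp \big( A_{W \cup V} \cap compl(Z) \cap (V \times Y) \big) \ge (\sharp W + \sharp V) - \sharp W = \sharp V,
\]
which is the required inequality.

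I do not expect a genuine obstacle here; the only thing requiring care is matching the partition produced by Lemma \ref{L:decomposition1} to the defining expression of $\mathcal{H}_W$ and correctly exploiting its disjointness so that cardinalities add, after which the Hall condition and the $W$-distributed condition plug in directly.
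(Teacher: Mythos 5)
Your proposal is correct and follows essentially the same route as the paper: both invoke Lemma \ref{L:H_W} for the collection property, observe that each member of $\mathcal{H}_W$ is disparate as a subset of the disparate set $A_{W \cup V}$, and obtain the cardinality bound by applying Lemma \ref{L:decomposition1} with $Z=\bigcup_{A \in\mathcal{H}_{\mid W}}A$, then combining $\sharp A_{W \cup V} \ge \sharp (W \cup V)$ with the $W$-distributed inequality. No substantive difference.
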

\begin{proof}
Let $\mathcal{H}=\{A_V \subset G(F_{\mid V}) \mid V \subset X, V \ne \emptyset \}$ and let 
$V \subset X \backslash W$ be a nonempty set. Using Lemma \ref{L:H_W}
\[
A_{W \cup V} \cap compl(\bigcup_{A \in\mathcal{H}_{\mid W}}A) \cap (V \times Y) 
\subset G(F_{W, G(F^{\mathcal{H}_W})})
\]
is a disparate set and using Lemma \ref{L:decomposition1} with 
$Z=\bigcup_{A \in\mathcal{H}_{\mid W}}A$ yields
\begin{align*}
&\sharp (A_{W \cup V} 
\cap compl(\bigcup_{A \in\mathcal{H}_{\mid W}}A)
\cap (V \times Y))  \\
&= \sharp A_{W \cup V} - 
\sharp (A_{W \cup V} 
\cap ((W \times Y) 
\cup (hull(\bigcup_{A \in\mathcal{H}_{\mid W}}A)
\cap (V \times Y))))  \\
&\ge  \sharp (W \cup V) - \sharp W  \\
&= \sharp V,
\end{align*}
i.e., $\mathcal{H}_W$ is a Hall collection.
\end{proof}

                                        %
                                        %
\section{Cascades} \label{S:cascades}
We extend the notation and results of Section \ref{S:collection} to cascades. Let $W_0 \subset X$. 
For any tuple $(W_1, \ldots, W_k)$, $k \in \mathbb{N}$, where $W_i \subset W_0$ for $i=1, \ldots, k$, 
we define the complementary tuple $(W^c_1, \ldots, W^c_k)$ of $(W_1, \ldots, W_k)$ in $W_0$ 
inductively by $W^c_0 = W_0$ and 
\[
W^c_i = 
\begin{cases}
W_{i-1} \backslash W_i     & \mbox{if } W_i \subset W_{i-1},  \\
W^c_{i-1} \backslash W_i & \mbox{if } W_i \not\subset W_{i-1},
\end{cases}
\]
for $i=1, \ldots, k$.

\begin{definition} \label{D:cascade}
A tuple $(W_1, \ldots, W_k)$, $k \in \mathbb{N}$, is called a cascade in $W_0 \subset X$ if 
$W_i \subset W_{i-1}$ or $W_i \subset W^c_{i-1}$ for $i=1, \ldots, k$.
\end{definition}

Please note, the complementary tuple $(W^c_1, \ldots, W^c_k)$ of a cascade $(W_1, \ldots, W_k)$ 
in $W_0$ is also a cascade in $W_0$.

If $(W_1, \ldots, W_k)$, $k \in \mathbb{N}$, is a cascade in $X$ with complementary tuple 
$(W^c_1, \ldots, W^c_k)$, $(W_2, \ldots, W_k)$ is a cascade in $W_1$ or $W^c_1$.
Let $W_0 \subset X$. If $(W_1, \ldots, W_k)$, $k \in \mathbb{N}$, is a cascade in $W_0$ or in
$X \backslash W_0$, $(W_0, W_1, \ldots, W_k)$ is a cascade in $X$.

Let $(W_1, \ldots, W_k)$, $k \in \mathbb{N}$, be a cascade in $W_0 \subset X$ with complementary
cascade $(W^c_1, \ldots, W^c_k)$. We define the predecessor function by 
\[
pre(i) = 
\begin{cases}
W_{i-1}     & \mbox{if } W_i \subset W_{i-1},  \\
W^c_{i-1} & \mbox{if } W_i \subset W^c_{i-1},
\end{cases}
\]
for $i=1, \ldots, k$. 

\begin{lemma} \label{L:pre}
Let $(W_1, \ldots, W_k)$, $k \in \mathbb{N}$, be a cascade in $W_0 \subset X$ with complementary
tuple $(W^c_1, \ldots, W^c_k)$. $pre(i) = W_i \cup W^c_i$ for $i=1, \ldots, k$.
\end{lemma}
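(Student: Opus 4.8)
The plan is to prove the identity $pre(i) = W_i \cup W^c_i$ by cases, following exactly the case split that appears in the definitions of the complementary tuple and the predecessor function. Fix $i \in \{1, \ldots, k\}$. By Definition \ref{D:cascade} we are in one of two cases: either $W_i \subset W_{i-1}$, or $W_i \subset W^c_{i-1}$ (these are the cases that define $pre(i)$, and they are not mutually exclusive only if $W_i$ happens to lie in both, which causes no trouble since then $W_{i-1} = W^c_{i-1}$ would be false in general — so I will handle the genuine dichotomy dictated by the definition of $W^c_i$, namely whether $W_i \subset W_{i-1}$ holds or not).

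First I would treat the case $W_i \subset W_{i-1}$. Then $pre(i) = W_{i-1}$ by definition, and $W^c_i = W_{i-1} \backslash W_i$ by the first branch in the definition of the complementary tuple. Since $W_i \subset W_{i-1}$, we get $W_i \cup W^c_i = W_i \cup (W_{i-1} \backslash W_i) = W_{i-1} = pre(i)$, which is the claim. Second, I would treat the case $W_i \not\subset W_{i-1}$. By Definition \ref{D:cascade} this forces $W_i \subset W^c_{i-1}$, so $pre(i) = W^c_{i-1}$, and the second branch in the definition of the complementary tuple gives $W^c_i = W^c_{i-1} \backslash W_i$. Since $W_i \subset W^c_{i-1}$, we obtain $W_i \cup W^c_i = W_i \cup (W^c_{i-1} \backslash W_i) = W^c_{i-1} = pre(i)$, again the claim.

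The only subtlety — and the step I would flag as needing a word of care rather than being a real obstacle — is the interaction of the two case distinctions: the predecessor function $pre$ splits on the condition "$W_i \subset W_{i-1}$'' versus "$W_i \subset W^c_{i-1}$,'' whereas the complementary tuple splits on "$W_i \subset W_{i-1}$'' versus "$W_i \not\subset W_{i-1}$.'' These agree precisely because the cascade hypothesis guarantees that if $W_i \not\subset W_{i-1}$ then $W_i \subset W^c_{i-1}$, and if $W_i \subset W_{i-1}$ the first branch applies unambiguously. So both functions are well defined on the same cases and the per-case computations above match up. No induction on $i$ is actually required — the identity is pointwise in $i$, using only $W^c_{i-1}$ and $W_{i-1}$ — so the proof is just the two elementary set-theoretic identities $A \cup (B \backslash A) = B$ whenever $A \subset B$, applied once in each branch.

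\begin{proof}
Fix $i \in \{1, \ldots, k\}$. By Definition \ref{D:cascade}, either $W_i \subset W_{i-1}$ or $W_i \subset W^c_{i-1}$.

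If $W_i \subset W_{i-1}$, then $pre(i) = W_{i-1}$ and, by the definition of the complementary tuple, $W^c_i = W_{i-1} \backslash W_i$. Hence $W_i \cup W^c_i = W_i \cup (W_{i-1} \backslash W_i) = W_{i-1} = pre(i)$.

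If $W_i \not\subset W_{i-1}$, then $W_i \subset W^c_{i-1}$ by Definition \ref{D:cascade}, so $pre(i) = W^c_{i-1}$ and, by the definition of the complementary tuple, $W^c_i = W^c_{i-1} \backslash W_i$. Hence $W_i \cup W^c_i = W_i \cup (W^c_{i-1} \backslash W_i) = W^c_{i-1} = pre(i)$.

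In both cases $pre(i) = W_i \cup W^c_i$, which proves the statement.
\end{proof}
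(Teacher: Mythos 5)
Your proof is correct and follows essentially the same two-case argument as the paper: split on whether $W_i \subset W_{i-1}$, read off $pre(i)$ and $W^c_i$ from the respective definitions, and apply $A \cup (B \backslash A) = B$. The extra remark aligning the two case splits is a reasonable clarification but not a departure from the paper's reasoning.
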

\begin{proof}
Let $1\le i \le k$. We distinguish two cases, use the definition of cascades and of the complementary 
tuple.  \\
Case 1: $W_i \subset W_{i-1}$.  \\
$pre(i) = W_{i-1} = W_i \cup (W_{i-1} \backslash W_i) = W_i \cup W^c_i$.  \\
Case 2: $W_i \subset W^c_{i-1}$. \\
$pre(i) = W^c_{i-1} = W_i \cup (W^c_{i-1} \backslash W_i) = W_i \cup W^c_i$.
\end{proof}

Let $\mathcal{H}$ be a collection of $F$ and let $(W_1, \ldots, W_k)$, $k \in \mathbb{N}$, be a 
cascade in $W_0=X$ with complementary tuple $(W^c_1, \ldots, W^c_k)$. Denote 
$\mathcal{H}(W_1, \ldots, W_0) = \mathcal{H}$ and $F(W_1, \ldots, W_0) = F$. We define inductively
\[
\mathcal{H}(W_1, \ldots, W_i) = 
\begin{cases}
\mathcal{H}(W_1, \ldots, W_{i-1})_{\mid W_i} & \mbox{if } W_{i+1} \subset W_i,  \\
\mathcal{H}(W_1, \ldots, W_{i-1})_{W_i} & \mbox{if } W_{i+1} \subset W^c_i,
\end{cases}
\]
the abbreviation
$\Tilde{F}(W_1, \ldots, W_i) = F(W_1, \ldots, W_{i-1})^{\mathcal{H}(W_1, \ldots, W_{i-1})_{W_i}}$ and 
\[
F(W_1, \ldots, W_i) =
\begin{cases}
\Tilde{F}(W_1, \ldots, W_i)                                               & \mbox{if } W_{i+1} \subset W_i,  \\
F(W_1, \ldots, W_{i-1})_{W_i,G(\Tilde{F}(W_1, \ldots, W_i))}  & \mbox{if } W_{i+1} \subset W^c_i
\end{cases}
\]
for $i=1, \ldots, k-1$.

Please note, $F(W_1, \ldots, W_i)$ depends on the Hall collection $\mathcal{H}$, but this dependance 
is not stated explicitly and we do not define the expressions $\mathcal{H}(W_1, \ldots, W_k)$ and 
$F(W_1, \ldots, W_k)$.


\begin{lemma} \label{L:cascade1}
Let $\mathcal{H}$ be a collection of $F$ and let $(W_1, \ldots, W_k)$, $k \in \mathbb{N}$, be a 
cascade in $W_0=X$.
\begin{enumerate}[(i)] 
\item $\mathcal{H}(W_1, \ldots, W_{i-1})$ is a collection of $F(W_1, \ldots, W_{i-1})$ and
\item $F(W_1, \ldots, W_{i-1})$ is defined on $pre(i)$,
\end{enumerate}
for $i=1, \ldots, k$.
\end{lemma}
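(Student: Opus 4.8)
The statement is a simultaneous assertion about the whole cascade, so the natural approach is induction on $i$, proving (i) and (ii) together for $i=1,\ldots,k$. First I would settle the base case $i=1$: here $\mathcal{H}(W_1,\ldots,W_0)=\mathcal{H}$ and $F(W_1,\ldots,W_0)=F$ by definition, $F$ is defined on $X=W_0=pre(1)$ (since $W_1\subset W_0=W_0^c$, so $pre(1)=W_0$), and $\mathcal{H}$ is a collection of $F$ by hypothesis. So the base case is immediate from the conventions set up just before the lemma.

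For the inductive step, assume (i) and (ii) hold for some index $i$ with $1\le i\le k-1$, and prove them for $i+1$. By the inductive hypothesis, $G=F(W_1,\ldots,W_{i-1})$ is defined on $pre(i)=W_i\cup W_i^c$ (using Lemma \ref{L:pre}) and $\mathcal{H}(W_1,\ldots,W_{i-1})$ is a collection of $G$. Now split into the two cases of the cascade condition at step $i+1$. If $W_{i+1}\subset W_i$: then $W_i\subset pre(i)=W_i\cup W_i^c$, so $W_i$ is a legitimate subset of the domain of $G$, and $F(W_1,\ldots,W_i)=\tilde F(W_1,\ldots,W_i)=G^{\mathcal{H}(W_1,\ldots,W_{i-1})_{W_i}}$ is, by the definition of the submapping $F^{\mathcal{H}_W}$, defined on $W_i$; since $W_{i+1}\subset W_i$ we have $pre(i+1)=W_i$, giving (ii). For (i), $\mathcal{H}(W_1,\ldots,W_i)=\mathcal{H}(W_1,\ldots,W_{i-1})_{\mid W_i}$, and Lemma \ref{L:HmidW} (applied to $G$ and the set $W_i$) says exactly that this restricted collection is a collection of $G^{\mathcal{H}(W_1,\ldots,W_{i-1})_{W_i}}=F(W_1,\ldots,W_i)$. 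If instead $W_{i+1}\subset W_i^c$: then again $W_i\subset pre(i)$, so $\tilde F(W_1,\ldots,W_i)=G^{\mathcal{H}(\ldots)_{W_i}}$ is defined on $W_i$, and $F(W_1,\ldots,W_i)=G_{W_i,\,G(\tilde F(W_1,\ldots,W_i))}$ is, by the definition of the complement mapping $F_{W,Z}$, defined on $(\text{domain of }G)\setminus W_i=(W_i\cup W_i^c)\setminus W_i=W_i^c=pre(i+1)$, which is (ii). For (i), $\mathcal{H}(W_1,\ldots,W_i)=\mathcal{H}(W_1,\ldots,W_{i-1})_{W_i}$, and Lemma \ref{L:H_W} (applied to $G$ and $W_i$) says this is a collection of the complement mapping $G_{W_i,\,G(G^{\mathcal{H}(\ldots)_{W_i}})}=F(W_1,\ldots,W_i)$.

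The one point deserving care — the potential obstacle — is checking that in each inductive step the set $W_i$ really is an admissible argument for the constructions $\mathcal{H}_{\mid W}$, $\mathcal{H}_W$, $F^{\mathcal{H}_W}$, $F_{W,Z}$ as defined in Sections \ref{S:collection} and \ref{S:complement}: those were defined for $F$ on $X$ and $W\subset X$, whereas here $G=F(W_1,\ldots,W_{i-1})$ is defined on the smaller vertex set $pre(i)$. The cascade condition $W_i\subset W_{i-1}$ or $W_i\subset W_{i-1}^c$ together with Lemma \ref{L:pre} is precisely what guarantees $W_i\subset pre(i)$, so the constructions make sense relative to the graph $(pre(i),E)$ (the edge set $E$ being inherited). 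All the auxiliary lemmas (\ref{L:HmidW}, \ref{L:H_W}) were stated for an arbitrary set-valued mapping and an arbitrary nonempty subset of its domain, so they apply verbatim with $F$ replaced by $G$ and $X$ replaced by $pre(i)$; I would note this explicitly once. A minor bookkeeping issue is handling the possibility $W_i=\emptyset$ or empty intermediate sets, but the definitions of $\mathcal{H}_{\mid W}$ and $\mathcal{H}_W$ range over nonempty $V$ and the lemmas assume $W$ nonempty, so either the cascade is implicitly assumed to have nonempty stages or the degenerate cases are vacuous; I would follow whichever convention the paper has adopted and not belabor it.
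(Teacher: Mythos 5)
Your proof is correct and takes essentially the same route as the paper: induction on $i$, with the base case immediate from the conventions and the inductive step resolved by the case split $W_{i+1}\subset W_i$ versus $W_{i+1}\subset W^c_i$ together with Lemmas \ref{L:HmidW}, \ref{L:H_W} and \ref{L:pre}. The only difference is one of exposition --- you make explicit the check that $W_i\subset pre(i)$ so that the constructions $\mathcal{H}_{\mid W_i}$, $\mathcal{H}_{W_i}$, $F^{\mathcal{H}_{W_i}}$ and $F_{W_i,Z}$ are admissible on the reduced domain, a point the paper leaves implicit.
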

\begin{proof}
Let $(W^c_1, \ldots, W^c_k)$ be the complementary tuple of $(W_1, \ldots, W_k)$. We prove 
the statements by induction on $i=1, \ldots, k$. The statements are true for $i=1$. Let $2\le i \le k$. 
\begin{enumerate}[(i)] 
\item By induction hypothesis $\mathcal{H}(W_1, \ldots, W_{i-2})$ is a collection of 
$F(W_1,$ $\ldots, W_{i-2})$. The statement follows from Lemmas \ref{L:HmidW} and \ref{L:H_W}
and the definitions of $\mathcal{H}(W_1, \ldots, W_{i-1})$ and $F(W_1, \ldots, W_{i-1})$.
\item By induction hypothesis the mapping $F(W_1, \ldots, W_{i-2})$ is defined on $pre(i-1)$ and by 
Lemma \ref{L:pre}, $pre(i-1) = W_{i-1} \cup W^c_{i-1}$. If $W_{i} \subset W_{i-1}$, 
$F(W_1, \ldots, W_{i-1})$ is defined on $W_{i-1} = pre(i)$ and if $W_{i} \subset W^c_{i-1}$, 
$F(W_1, \ldots, W_{i-1})$ is defined on $W^c_{i-1} = pre(i)$.
\end{enumerate}
\end{proof}

\begin{lemma} \label{L:cascade2}
Let $\mathcal{H}$ be a collection of $F$ and let $(W_1, \ldots, W_k)$, $k \in \mathbb{N}$, be a 
cascade in $X$.
\begin{enumerate}[(i)] 
\item $\mathcal{H}(W_1, \ldots, W_i) = \mathcal{H}(W_1)(W_2, \ldots, W_i)$ and
\item $F(W_1, \ldots, W_i) = F(W_1)(W_2, \ldots, W_i)$
\end{enumerate}
for $i=1, \ldots, k-1$.
\end{lemma}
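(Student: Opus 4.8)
The plan is to prove both statements simultaneously by induction on $i$, mirroring the inductive structure of Lemma \ref{L:cascade1}. First I would observe that the base case $i=1$ is immediate: $\mathcal{H}(W_1) = \mathcal{H}(W_1)$ and $F(W_1) = F(W_1)$ by definition (here $\mathcal{H}(W_1)(W_2,\ldots,W_1)$ and $F(W_1)(W_2,\ldots,W_1)$ denote the base objects of the cascade $(W_2,\ldots,W_k)$ in $W_1$ or in $X\setminus W_1$, i.e., $\mathcal{H}(W_1)$ and $F(W_1)$ themselves). The key point to set up cleanly is that $(W_2,\ldots,W_k)$ is a cascade in $W_1$ or in $W^c_1 = X\setminus W_1$ (as noted right after Definition \ref{D:cascade}), so the expressions $\mathcal{H}(W_1)(W_2,\ldots,W_i)$ and $F(W_1)(W_2,\ldots,W_i)$ are well-defined via the same inductive scheme applied to the shorter cascade with starting set $W_1$.

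For the inductive step, assume the identities hold for $i-1$, with $2 \le i \le k-1$. I would then unfold the recursive definitions on both sides and check that they agree case by case according to whether $W_{i+1}\subset W_i$ or $W_{i+1}\subset W^c_i$. On the left side, $\mathcal{H}(W_1,\ldots,W_i)$ is obtained from $\mathcal{H}(W_1,\ldots,W_{i-1})$ by either the $_{\mid W_i}$ or the $_{W_i}$ construction; by the induction hypothesis $\mathcal{H}(W_1,\ldots,W_{i-1}) = \mathcal{H}(W_1)(W_2,\ldots,W_{i-1})$, and the same construction applied to the right-hand object yields exactly $\mathcal{H}(W_1)(W_2,\ldots,W_i)$. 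The analogous unfolding works for $F$: the definitions of $\Tilde{F}$ and of $F(W_1,\ldots,W_i)$ are determined entirely by $F(W_1,\ldots,W_{i-1})$, the collection $\mathcal{H}(W_1,\ldots,W_{i-1})$, and the set $W_i$, all of which match their counterparts in the $W_1$-based cascade by the induction hypothesis. A subtle bookkeeping point I would check is that the predecessor function and the complementary tuple are consistent between the two parametrizations — i.e., $W^c_i$ computed inside the cascade $(W_1,\ldots,W_k)$ in $X$ agrees, for $i\ge 2$, with the complement computed inside the cascade $(W_2,\ldots,W_k)$ in $W_1$ (or in $W^c_1$); this follows from the inductive definition of the complementary tuple, since $W^c_1$ is exactly the starting set of the reindexed cascade and the recursion for $W^c_i$ only looks back one step.

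The main obstacle I anticipate is purely notational rather than mathematical: one must be careful that the recursive definitions of $\mathcal{H}(W_1,\ldots,W_i)$ and $F(W_1,\ldots,W_i)$ ``look back'' only to index $i-1$ and to $W_i$, so that stripping off the leading $W_1$ does not change the output — this is really the statement that the cascade recursion is ``history-free'' beyond one step. Once that is recognized, the proof is a routine double induction with a case split, and no new lemmas beyond Lemmas \ref{L:pre} and \ref{L:cascade1} and the definitions of $\mathcal{H}_{\mid W}$, $\mathcal{H}_W$, $F^{\mathcal{H}_W}$ and the complement mapping are needed. I would write it as a short induction: state the base case in one line, then in the inductive step handle the two cases $W_{i+1}\subset W_i$ and $W_{i+1}\subset W^c_i$ in parallel for parts (i) and (ii), invoking the induction hypothesis to rewrite $\mathcal{H}(W_1,\ldots,W_{i-1})$ and $F(W_1,\ldots,W_{i-1})$ and then applying the identical recursive step on both sides.
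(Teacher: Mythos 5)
Your proposal is correct and follows essentially the same route as the paper: a joint induction on $i$ with trivial base case $i=1$, a case split on $W_{i+1}\subset W_i$ versus $W_{i+1}\subset W^c_i$, and an application of the induction hypothesis followed by the identical one-step recursion on both sides. Your additional remark that the complementary tuple of $(W_2,\ldots,W_k)$ in $W_1$ (or $W^c_1$) agrees with $(W^c_2,\ldots,W^c_k)$ is a point the paper leaves implicit, and it is worth the one line you give it.
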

\begin{proof}
Let $(W^c_1, \ldots, W^c_k)$ be the complementary tuple of $(W_1, \ldots, W_k)$. Using 
Lemma \ref{L:cascade1}, $\mathcal{H}(W_1)$ is a collection of $F(W_1)$.
We prove the statements by induction on $i=1, \ldots, k-1$. The statements are true for $i=1$. 
Let $2\le i \le k-1$. We distinguish two cases.  \\
Case 1: $W_{i+1} \subset W_i$.  
\begin{enumerate}[(i)] 
\item Using the induction hypothesis
\begin{align*}                                                   
& \mathcal{H}(W_1, \ldots, W_i) = \mathcal{H}(W_1, \ldots, W_{i-1})_{\mid W_i}  \\
& = \mathcal{H}(W_1)(W_2, \ldots, W_{i-1})_{\mid W_i} 
= \mathcal{H}(W_1)(W_2, \ldots, W_i).
\end{align*}
\item Using the induction hypothesis and ``(i)"
\begin{align*}                                                   
& F(W_1, \ldots, W_i) 
= F(W_1, \ldots, W_{i-1})^{\mathcal{H}(W_1, \ldots, W_{i-1})_{W_i}}  \\
& = F(W_1)(W_2, \ldots, W_{i-1})^{\mathcal{H}(W_1)(W_2, \ldots, W_{i-1})_{W_i}}
= F(W_1)(W_2, \ldots, W_i).
\end{align*}
\end{enumerate}
Case 2: $W_{i+1} \subset W^c_i$.  
\begin{enumerate}[(i)] 
\item Using the induction hypothesis
\begin{align*}                                                   
& \mathcal{H}(W_1, \ldots, W_i) = \mathcal{H}(W_1, \ldots, W_{i-1})_{W_i}  \\
& = \mathcal{H}(W_1)(W_2, \ldots, W_{i-1})_{W_i} = \mathcal{H}(W_1)(W_2, \ldots, W_i).
\end{align*}
\item Using the induction hypothesis and ``(i)"
\begin{align*}                                                   
& F(W_1, \ldots, W_i) 
= F(W_1, \ldots, W_{i-1})_{W_i,G(F(W_1, \ldots, W_{i-1})^{\mathcal{H}(W_1, \ldots, W_{i-1})_{W_i}})}  \\
& = F(W_1)(W_2, \ldots, W_{i-1})
_{W_i,G(F(W_1)(W_2, \ldots, W_{i-1})^{\mathcal{H}(W_1)(W_2, \ldots, W_{i-1})_{W_i}})}  \\
& = F(W_1)(W_2, \ldots, W_i).
\end{align*}
\end{enumerate}
\end{proof}

In the remaining part of this section we consider a property of Hall collections of $F$.

\begin{lemma} \label{L:cascadeHall}  
Let $\mathcal{H}$ be a Hall collection of $F$ and let $(W_1, \ldots, W_k)$, $k \in \mathbb{N}$, be a 
cascade in $W_0=X$ such that $\mathcal{H}(W_1, \ldots , W_{i-1})$ is $W_i$-distributed for $i=1, \ldots, k$. 
$\mathcal{H}(W_1, \ldots , W_i)$ is a Hall collection of $F(W_1, \ldots , W_i)$ for $i=1, \ldots, k-1$. 
\end{lemma}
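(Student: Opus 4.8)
The plan is to prove the statement by induction on $i=1,\ldots,k-1$, mirroring the inductive bookkeeping already set up in Lemmas \ref{L:cascade1} and \ref{L:cascade2}. The base case $i=1$ is exactly Lemma \ref{L:distributed1}: since $\mathcal{H}=\mathcal{H}(W_1,\ldots,W_0)$ is a Hall collection of $F=F(W_1,\ldots,W_0)$ and is $W_1$-distributed by hypothesis, $\mathcal{H}_{W_1}=\mathcal{H}(W_1)$ is a Hall collection of $F_{W_1,G(F^{\mathcal{H}_{W_1}})}=F(W_1)$. For the inductive step, assume $2\le i\le k-1$ and that $\mathcal{H}(W_1,\ldots,W_{i-1})$ is a Hall collection of $F(W_1,\ldots,W_{i-1})$. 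We distinguish the two cases appearing in the definition of a cascade, according to whether $W_{i+1}\subset W_i$ or $W_{i+1}\subset W^c_i$.

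In the case $W_{i+1}\subset W_i$, we have $\mathcal{H}(W_1,\ldots,W_i)=\mathcal{H}(W_1,\ldots,W_{i-1})_{\mid W_i}$ and $F(W_1,\ldots,W_i)=\tilde F(W_1,\ldots,W_i)=F(W_1,\ldots,W_{i-1})^{\mathcal{H}(W_1,\ldots,W_{i-1})_{W_i}}$. Here I would invoke Lemma \ref{L:HmidW} applied to the collection $\mathcal{H}(W_1,\ldots,W_{i-1})$ of $F(W_1,\ldots,W_{i-1})$ with the set $W_i$: it gives that $\mathcal{H}(W_1,\ldots,W_{i-1})_{\mid W_i}$ is a collection of $F(W_1,\ldots,W_{i-1})^{\mathcal{H}(W_1,\ldots,W_{i-1})_{W_i}}$. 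It remains only to check the Hall property, which is immediate: $\mathcal{H}(W_1,\ldots,W_{i-1})_{\mid W_i}$ consists of those $A_V\in\mathcal{H}(W_1,\ldots,W_{i-1})$ with $\emptyset\ne V\subset W_i$, and for each such $V$ the set $A_V$ is disparate with $\sharp A_V\ge\sharp V$ because $\mathcal{H}(W_1,\ldots,W_{i-1})$ is a Hall collection. This is exactly the remark in the text that ``$\mathcal{H}_{\mid W}$ is a Hall collection of $F_{\mid W}$ if $\mathcal{H}$ is a Hall collection of $F$'', here applied at level $i-1$.

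In the case $W_{i+1}\subset W^c_i$, we have $\mathcal{H}(W_1,\ldots,W_i)=\mathcal{H}(W_1,\ldots,W_{i-1})_{W_i}$ and $F(W_1,\ldots,W_i)=F(W_1,\ldots,W_{i-1})_{W_i,G(\tilde F(W_1,\ldots,W_i))}$ with $\tilde F(W_1,\ldots,W_i)=F(W_1,\ldots,W_{i-1})^{\mathcal{H}(W_1,\ldots,W_{i-1})_{W_i}}$. By the induction hypothesis $\mathcal{H}(W_1,\ldots,W_{i-1})$ is a Hall collection of $F(W_1,\ldots,W_{i-1})$, and by hypothesis of the lemma $\mathcal{H}(W_1,\ldots,W_{i-1})$ is $W_i$-distributed; moreover $W_i$ is a nonempty subset of $\mathrm{pre}(i)=W_{i-1}\cup W^c_{i-1}$, the domain of $F(W_1,\ldots,W_{i-1})$, by Lemmas \ref{L:pre} and \ref{L:cascade1}. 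Hence Lemma \ref{L:distributed1}, applied with $F$ replaced by $F(W_1,\ldots,W_{i-1})$, $\mathcal{H}$ replaced by $\mathcal{H}(W_1,\ldots,W_{i-1})$ and $W$ replaced by $W_i$, yields that $\mathcal{H}(W_1,\ldots,W_{i-1})_{W_i}$ is a Hall collection of the complement mapping $F(W_1,\ldots,W_{i-1})_{W_i,G(F(W_1,\ldots,W_{i-1})^{\mathcal{H}(W_1,\ldots,W_{i-1})_{W_i}})}$, which is precisely $F(W_1,\ldots,W_i)$. This completes both cases and the induction.

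The main point requiring care — and the only genuine obstacle — is purely notational: one must be confident that the hypotheses of Lemma \ref{L:distributed1} (and of Lemma \ref{L:HmidW}) are legitimately available at level $i-1$, i.e. that ``$W_i$ nonempty subset of the domain of $F(W_1,\ldots,W_{i-1})$'' holds and that the earlier lemmas were stated for an arbitrary ambient set-valued mapping on an arbitrary finite graph, so that they apply verbatim to $F(W_1,\ldots,W_{i-1})$. Both are guaranteed: Lemma \ref{L:cascade1}(ii) identifies the domain as $\mathrm{pre}(i)$, and a cascade requires $W_i\subset\mathrm{pre}(i)$; the assumption that $W_i$ is nonempty is part of the cascade hypothesis implicit in the $W_i$-distributed condition (which quantifies over nonempty $V\subset X\backslash W_i$ and presupposes $W_i\ne\emptyset$ in Definition \ref{D:distributed1}). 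Once this is observed, the proof is a straightforward two-case induction with no computation beyond what Lemmas \ref{L:HmidW} and \ref{L:distributed1} already provide.
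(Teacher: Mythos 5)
Your proof follows the paper's own argument essentially verbatim: induction on $i$, with the two cases $W_{i+1}\subset W_i$ and $W_{i+1}\subset W^c_i$ handled by Lemma \ref{L:HmidW} (together with the observation that restricting a Hall collection preserves disparateness and cardinality) and by Lemma \ref{L:distributed1}, respectively. The one slip is in your base case: you assert $\mathcal{H}(W_1)=\mathcal{H}_{W_1}$ and $F(W_1)=F_{W_1,G(F^{\mathcal{H}_{W_1}})}$, but by the inductive definitions this holds only when $W_2\subset W^c_1$; when $W_2\subset W_1$ one has instead $\mathcal{H}(W_1)=\mathcal{H}_{\mid W_1}$ and $F(W_1)=F^{\mathcal{H}_{W_1}}$, and the base case must then be settled by Lemma \ref{L:HmidW} plus the restriction remark --- exactly the argument you already give in Case 1 of your inductive step (and the reason the paper's base case cites both Lemma \ref{L:HmidW} and Lemma \ref{L:distributed1}). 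Since you supply that argument anyway, this is a bookkeeping omission rather than a genuine gap.
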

\begin{proof}
Let $(W^c_1, \ldots, W^c_k)$ be the complementary tuple of $(W_1, \ldots, W_k)$. Using 
Lemma \ref{L:cascade1}, $\mathcal{H}(W_1)$ is a collection of $F(W_1)$.
We prove the statement by induction on $i=1, \ldots, k-1$. Let $i=1$. $\mathcal{H}(W_1)$ 
is a Hall collection of $F(W_1)$ by Lemma \ref{L:HmidW} and \ref{L:distributed1}. Let $2 \le i \le k-1$.  
We distinguish two cases.  \\
Case 1: $W_{i+1} \subset W_i$.  \\ 
Using the induction hypothesis and Lemma \ref{L:HmidW},
$\mathcal{H}(W_1, \ldots , W_{i-1})_{\mid W_i}$ is a Hall collection of 
$F(W_1, \ldots , W_{i-1})^{\mathcal{H}(W_1, \ldots , W_{i-1})_{W_i}}$. The statement follows
from $W_{i+1} \subset W_i$ and the definition of $\mathcal{H}(W_1, \ldots , W_i)$ and 
$F(W_1, \ldots , W_i)$.  \\
Case 2: $W_{i+1} \subset W^c_i$.  \\ 
Using the induction hypothesis and Lemma \ref{L:distributed1},
$\mathcal{H}(W_1, \ldots , W_{i-1})_{W_i}$ is a Hall collection of
$F(W_1, \ldots , W_{i-1})_{W_i,G(F(W_1, \ldots , W_{i-1})^{\mathcal{H}(W_1, \ldots , W_{i-1})_{W_i}})}$.
The statement follows from $W_{i+1} \subset W^c_i$ and the definition of $\mathcal{H}(W_1, \ldots , W_i)$ and 
$F(W_1, \ldots , W_i)$.  
\end{proof}

                                        %
                                        %
\section{Generalized Marriage Theorem} \label{S:main}
This section contains the main result of this paper. We prove a sufficient and necessary condition 
on $F$ for the existence of a disparate selection. This theorem is called a generalized marriage
theorem and the proof is a generalization of the proof of Halmos and Vaughan \cite{HV}
of the marriage theorem.

Before we are able to state this theorem we have to introduce the generalized Hall condition in an 
inductive process. Within this process we define generalized critical sets.

\begin{definition} \label{D:Hall1}
Let $\sharp X = 1$. $F$ is called to satisfy the generalized Hall condition if $G(F) \ne \emptyset$.
\end{definition}

This definition describes the initialization of the generalized Hall condition. By induction hypothesis 
we defined the generalized Hall condition for sets $X^\prime$ with $\sharp X^\prime < \sharp X$.
We need some more definitions before we define the generalized Hall condition for sets $X$ of 
arbitrary size.

\begin{definition} \label{D:critical}
A nonempty set $W \subset X$ is called a generalized critical set of $F$ if there exists 
$(x,y) \in (X \backslash W) \times Y$ such that the complement mapping 
${(F_{\{x\}, \{(x,y)\}})}_{\mid W}$ does not satisfy the generalized Hall condition.
\end{definition}

In particular $\sharp W < \sharp X$ in Definition \ref{D:critical} and the generalized Hall condition
of the mapping ${(F_{\{x\}, \{(x,y)\}})}_{\mid W}$ is defined by induction hypothesis.

Critical sets had been used by Halmos and Vaughan \cite{HV} in their proof of the marriage theorem 
without naming them explicitly. Easterfield \cite{Eas} used critical sets in an algorithm determining 
a minimal sum in an $n \times n$-array of real numbers and called them ``exactly adjusted". 
Everett and Whaples \cite{EW} proved a generalization of Hall's theorem and called them ``perfect". 
The term ``critical" had been introduced by M. Hall \cite{HalM2} and he considered critical blocks. 
Schrijver \cite{Sch} called them ``tight" sets. 

The consideration of critical subsets had been used by Crook \cite{Cro} and Provan \cite{Pro} in their 
description of a strategy solving Sudoku puzzles. Crook used the term ``preemptive" set and Provan 
used the term ``pigeon-hole rule". 

\begin{definition} \label{D:critical2}  
Let $\mathcal{H}$ be a Hall collection of $F$. A cascade $(W_1, \ldots,$ $W_k)$, $k \in \mathbb{N}$, 
in $W_0=X$ is called $(F,\mathcal{H})$-critical if $W_i$ is a generalized critical set of 
$F(W_1, \ldots, W_{i-1})$ for $i=1, \ldots, k$.
\end{definition}

A generalized critical set $W \subset X$ of $F$ is called minimal if there does not exist a subset 
$W^\prime \subset W$, $W^\prime \ne W$, such that $W^\prime$ is a generalized critical set of $F$.
Please note, $F$ admits a primitive generalized critical set if and only if $F$ admits a generalized critical 
set.

\begin{definition} \label{D:primitive1}
Let $\mathcal{H}$ be a Hall collection of $F$. An $(F,\mathcal{H})$-critical cascade $(W_1, \ldots, W_k)$, 
$k \in \mathbb{N}$, in $W_0=X$ is called primitive if $W_i$ is a minimal generalized critical set of 
$F(W_1, \ldots, W_{i-1})$ for $i=1, \ldots, k$.
\end{definition}

Using the notation of primitive critical cascades we are able to introduce a ``distributed"-condition 
on a Hall collection.

\begin{definition} \label{D:distributed2}
A Hall collection $\mathcal{H}$ of $F$ is called distributed if the collection 
$\mathcal{H}(W_1, \ldots, W_{i-1})$ of $F(W_1, \ldots, W_{i-1})$ is $W_i$-distributed for $i=1, \ldots, k$ 
and each primitive $(F,\mathcal{H})$-critical cascade $(W_1, \ldots, W_k)$, $k \in \mathbb{N}$, in $W_0=X$.
\end{definition}

As a consequence of Lemma \ref{L:cascadeHall}, $\mathcal{H}(W_1, \ldots, W_i)$ is a Hall collection of
$F(W_1, \ldots, W_i)$ for $i=1, \ldots, k-1$, each primitive $(F,\mathcal{H})$-critical cascade 
$(W_1, \ldots, W_k)$ in $X$ and each distributed Hall collection $\mathcal{H}$ of $F$.

\begin{definition} \label{D:Hall2}
Let $\sharp X > 1$. $F$ is called to satisfy the generalized Hall condition if $F$ admits a distributed 
Hall collection.
\end{definition}

Definition \ref{D:Hall2} completes the inductive step in the definition of the generalized Hall condition 
and we are able to state our main result.

\begin{theorem}[Generalized Marriage Theorem] \label{T:main}
The following statements are equivalent: 
\begin{enumerate}[(i)] 
\item $F$ satisfies the generalized Hall condition.
\item $F$ admits a disparate selection.
\end{enumerate}
\end{theorem}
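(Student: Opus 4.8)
The plan is to prove the equivalence by induction on $\sharp X$, mirroring the Halmos--Vaughan argument. The base case $\sharp X = 1$ is immediate from Definition \ref{D:Hall1}: $F$ has a disparate selection iff $G(F) \ne \emptyset$ (disparateness is vacuous on a single vertex). So assume $\sharp X > 1$ and that the equivalence holds for all strictly smaller vertex sets.

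For the direction ``(ii) $\Rightarrow$ (i)'', suppose $s$ is a disparate selection of $F$. I would show that $s$ induces a canonical Hall collection $\mathcal{H}^s$ of $F$ (the referenced Section \ref{S:necessity}, used informally in Examples \ref{E:42} and \ref{E:43}), and that this $\mathcal{H}^s$ is distributed. The point is that each $A_V$ should be built so that $G(s_{\mid V}) \subset A_V$ together with enough extra points to reach $\sharp A_V \ge \sharp V$ while staying disparate; disparateness of $s$ is what makes $G(s_{\mid V})$ itself a disparate set of size $\sharp V$. For the ``distributed'' clause one runs down an arbitrary primitive $(F,\mathcal{H}^s)$-critical cascade $(W_1,\dots,W_k)$: Lemma \ref{L:complement2} lets us split $s$ along $W_i / W^c_i$ at each stage, so the restrictions of $s$ survive as disparate selections of the complement mappings $F(W_1,\dots,W_i)$, and the cardinality bookkeeping forces the $W_i$-distributed inequality of Definition \ref{D:distributed1} (a critical set cannot absorb more than $\sharp W_i$ of the selection's points inside the hull-plus-$W_i\times Y$ region, else some submapping would violate the Hall condition, contradicting that $s$ restricts to a disparate selection there).

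For the harder direction ``(i) $\Rightarrow$ (ii)'', assume $F$ satisfies the generalized Hall condition, witnessed by a distributed Hall collection $\mathcal{H}$. There are two cases. If $F$ has \emph{no} generalized critical set, then for every $(x,y) \in X \times Y$ with $y \in F(x)$ and every nonempty $W \subset X \setminus \{x\}$, the mapping $(F_{\{x\},\{(x,y)\}})_{\mid W}$ satisfies the generalized Hall condition; picking any $x$ and any $y \in F(x)$ (nonemptiness of $F(x)$ comes from $\mathcal{H}$ being a Hall collection, via $A_{\{x\}}$), the complement mapping $F_{\{x\},\{(x,y)\}}$ on $X \setminus \{x\}$ satisfies the generalized Hall condition on a smaller vertex set — here I would need the submapping-to-full-mapping step, i.e.\ that a map dominating one satisfying the Hall condition also does, so that $F_{\{x\},\{(x,y)\}}$ itself (not just its restrictions) qualifies — hence by induction it admits a disparate selection, and Lemma \ref{L:complement1} extends it to a disparate selection of $F$. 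If $F$ \emph{does} have a generalized critical set, take a minimal one $W$ (so $(W)$ starts a primitive critical cascade); by Lemma \ref{L:distributed1}, $\mathcal{H}_W$ is a Hall collection of $F_{W,G(F^{\mathcal{H}_W})}$ and by Lemma \ref{L:HmidW}, $\mathcal{H}_{\mid W}$ is a Hall collection of $F^{\mathcal{H}_W}$. One then checks that both $F^{\mathcal{H}_W}$ (on $W$) and $F_{W,G(F^{\mathcal{H}_W})}$ (on $X \setminus W$) again satisfy the generalized Hall condition — the distributedness of $\mathcal{H}$ propagates to distributedness of $\mathcal{H}_{\mid W}$ and $\mathcal{H}_W$ because primitive critical cascades of the pieces, prefixed by $W$, are primitive critical cascades of $F$ (this is the content packaged in Lemmas \ref{L:cascade1}, \ref{L:cascade2}, \ref{L:cascadeHall}). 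Since $\sharp W < \sharp X$ and $\sharp(X \setminus W) < \sharp X$, induction gives a disparate selection $s_1$ of $F^{\mathcal{H}_W}$ on $W$ and a disparate selection $s_2$ of $F_{W,G(F^{\mathcal{H}_W})}$ on $X \setminus W$; gluing them and invoking Lemma \ref{L:complement2} (with $Z = G(F^{\mathcal{H}_W})$, noting $s_1$ is a selection of $F_{\mid W}$ with $G(s_1) \subset Z$ and $s_2$ a selection of $F_{W,Z}$) yields a disparate selection of $F$.

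The main obstacle is the bookkeeping in the inductive step of ``(i) $\Rightarrow$ (ii)'': verifying that the generalized Hall condition — itself defined through a nested recursion over primitive critical cascades — is inherited by $F^{\mathcal{H}_W}$ and by the complement mapping $F_{W,G(F^{\mathcal{H}_W})}$. Concretely one must match primitive $(F^{\mathcal{H}_W},\mathcal{H}_{\mid W})$-critical cascades (resp.\ $(F_{W,\dots},\mathcal{H}_W)$-critical cascades) with prefixed primitive $(F,\mathcal{H})$-critical cascades in $X$ so that the $W_i$-distributed inequalities transfer, using Lemma \ref{L:cascade2} to commute the two constructions $(-)_{\mid W_i}$ and $(-)_{W_i}$ with passing to $F(W_1)$, and Lemma \ref{L:cascadeHall} to keep everything a Hall collection along the way. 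The second delicate point, also needed in the no-critical-set case, is closing the gap between ``every proper restriction satisfies the Hall condition'' and ``the mapping itself satisfies it'': one wants a monotonicity lemma stating that if $G(F') \supset G(F)$ pointwise and $F$ satisfies the generalized Hall condition then so does $F'$, which should follow by the same induction since enlarging images can only shrink the family of generalized critical sets and only relax the distributed inequalities.
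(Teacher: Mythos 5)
Your proposal follows essentially the same route as the paper: induction on $\sharp X$, splitting on whether $F$ admits a generalized critical set, decomposing via $F^{\mathcal{H}_W}$ and the complement mapping $F_{W,G(F^{\mathcal{H}_W})}$ with the cascade lemmas for sufficiency, and using the induced collection $\mathcal{H}^s$ (where one simply takes $A_V = G(s_{\mid V})$, which already has exactly $\sharp V$ points, so no extra points are needed) for necessity. The one ``delicate point'' you flag in the no-critical-set case --- needing a monotonicity lemma to pass from restrictions to the full complement mapping --- is unnecessary: taking $W = X \backslash \{x\}$ in Definition \ref{D:critical} makes $(F_{\{x\},\{(x,y)\}})_{\mid W}$ equal to $F_{\{x\},\{(x,y)\}}$ itself, which is exactly how Lemma \ref{L:74} disposes of that case.
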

\begin{proof}
The proof of this theorem requires several lemmas in both directions, which are collected in the next
two sections. The implication ``(i) $\Rightarrow$ (ii)" is contained in Lemma \ref{L:79}. The 
implication ``(ii) $\Rightarrow$ (i)" is contained in Lemma \ref{L:87}.
\end{proof}

                                        %
                                        %
\section{Sufficiency} \label{S:sufficiency}
This section contains the lemmas, which are used in the sufficiency part of the generalized marriage 
theorem. The results of this section are subsumed in Lemma \ref{L:79}.

\begin{lemma} \label{L:74}
Let $F$ satisfy the generalized Hall condition and assume $F$ does not admit a generalized critical set.
The complement mapping $F_{\{x\}, \{(x,y)\}}$ satisfies the generalized Hall condition for each 
$(x,y) \in G(F)$.
\end{lemma}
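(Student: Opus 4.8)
The plan is to deduce the statement directly from Definition~\ref{D:critical}, instantiated at the single set $W = X \backslash \{x\}$; the proof requires no induction of its own and no auxiliary construction. First I would dispose of the degenerate case $\sharp X = 1$: then $X \backslash \{x\} = \emptyset$, the complement mapping $F_{\{x\},\{(x,y)\}}$ has empty domain, and there is nothing to verify. So from now on I assume $\sharp X > 1$ and fix a pair $(x,y) \in G(F)$; note $Y \ne \emptyset$ since $G(F) \subset X \times Y$ is nonempty.

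Next I would check that $W := X \backslash \{x\}$ is a legitimate candidate for a generalized critical set of $F$: it is a nonempty subset of $X$ with $\sharp W = \sharp X - 1 < \sharp X$, exactly as Definition~\ref{D:critical} requires. By hypothesis $F$ admits no generalized critical set, so in particular $W$ is not one. Unwinding the negation of the condition in Definition~\ref{D:critical}, and using the bookkeeping identity $X \backslash W = \{x\}$, this says precisely: for \emph{every} $y^\prime \in Y$ the complement mapping ${(F_{\{x\},\{(x,y^\prime)\}})}_{\mid W}$ satisfies the generalized Hall condition. Specializing to $y^\prime = y$ (admissible since $y \in Y$) yields that ${(F_{\{x\},\{(x,y)\}})}_{\mid W}$ satisfies the generalized Hall condition.

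Finally I would observe that, by the definition of the complement mapping, $F_{\{x\},\{(x,y)\}}$ is a set-valued mapping whose domain is $X \backslash \{x\} = W$, so restricting it to $W$ changes nothing: ${(F_{\{x\},\{(x,y)\}})}_{\mid W} = F_{\{x\},\{(x,y)\}}$. Hence $F_{\{x\},\{(x,y)\}}$ itself satisfies the generalized Hall condition, which is the claim. There is no substantial obstacle; the only points needing care are the edge case $\sharp X = 1$ and the elementary identity $X \backslash (X \backslash \{x\}) = \{x\}$. It is worth noting, moreover, that the hypothesis ``$F$ satisfies the generalized Hall condition'' is not actually used in the argument beyond guaranteeing $G(F) \ne \emptyset$, so that the assertion is not vacuous.
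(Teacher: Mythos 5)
Your proof is correct and follows essentially the same route as the paper: both arguments simply observe that $X \backslash \{x\}$ is not a generalized critical set and unwind Definition~\ref{D:critical} at that single set. Your version merely spells out the quantifier negation, the identity $X \backslash (X \backslash \{x\}) = \{x\}$, and the edge case $\sharp X = 1$, which the paper leaves implicit.
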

\begin{proof}
Let $(x,y) \in G(F)$. Using the assumption, $X \backslash \{x\}$ is not a generalized critical set of $F$,
i.e., $F_{\{x\}, \{(x,y)\}}$ satisfies the generalized Hall condition (compare Definition \ref{D:critical}).
\end{proof}

In their proof of the marriage theorem Halmos and Vaughan \cite{HV} distinguish the cases where 
$F$ admits a critical set or not. In our context the case where $F$ does not admit a critical set is 
treated in Lemma \ref{L:74}. The case where $F$ admits a critical set is treated in Lemma \ref{L:78}.
We need a preparatory lemma.

\begin{lemma} \label{L:7prep}
Let $\mathcal{H}$ be a Hall collection of $F$. Let $W_1 \subset X$ be a minimal generalized 
critical set of $F$ and let $(W_2, \ldots, W_k)$, $k \in \mathbb{N}$, $k \ge 2$, be a primitive 
$(F(W_1),\mathcal{H}(W_1))$-critical cascade in $W_1$ or in $X \backslash W_1$. 
$(W_1, W_2, \ldots, W_k)$ is a primitive $(F,\mathcal{H})$-critical cascade in $X$.
\end{lemma}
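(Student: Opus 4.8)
The plan is to verify the definition of a primitive $(F,\mathcal{H})$-critical cascade (Definition \ref{D:primitive1}) directly for the tuple $(W_1, W_2, \ldots, W_k)$, using Lemma \ref{L:cascade2} to reduce the conditions on $F(W_1, \ldots, W_{i-1})$ to conditions on $F(W_1)(W_2, \ldots, W_{i-1})$, which are exactly the hypotheses provided. First I would observe that $(W_1, W_2, \ldots, W_k)$ is indeed a cascade in $X$: since $(W_2, \ldots, W_k)$ is a cascade in $W_1$ or in $X \backslash W_1$, the remark following Definition \ref{D:cascade} (``if $(W_1, \ldots, W_k)$ is a cascade in $W_0$ or in $X \backslash W_0$, then $(W_0, W_1, \ldots, W_k)$ is a cascade in $X$'') applies with $W_0 = W_1$. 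I would also note that, because $W_1$ is a generalized critical set of $F$, we have $\sharp W_1 < \sharp X$, so the inductive definitions of the earlier sections are available.

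Next I would check the critical-set condition vertex by vertex. For $i=1$: $W_1$ is a minimal generalized critical set of $F = F(W_1, \ldots, W_0)$ by hypothesis, so in particular a generalized critical set. For $i = 2, \ldots, k$: by Lemma \ref{L:cascade2}(ii) applied to the cascade $(W_1, W_2, \ldots, W_k)$ in $X$, we have $F(W_1, W_2, \ldots, W_{i-1}) = F(W_1)(W_2, \ldots, W_{i-1})$. Since $(W_2, \ldots, W_k)$ is a primitive $(F(W_1), \mathcal{H}(W_1))$-critical cascade (in $W_1$ or in $X \backslash W_1$), Definition \ref{D:critical2} gives that $W_i$ is a generalized critical set of $F(W_1)(W_2, \ldots, W_{i-1})$, hence of $F(W_1, W_2, \ldots, W_{i-1})$. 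This establishes that $(W_1, \ldots, W_k)$ is an $(F,\mathcal{H})$-critical cascade.

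Then I would upgrade ``critical'' to ``minimal critical'' to obtain primitivity. For $i=1$, minimality of $W_1$ is part of the hypothesis. For $i \ge 2$, primitivity of $(W_2, \ldots, W_k)$ says $W_i$ is a \emph{minimal} generalized critical set of $F(W_1)(W_2, \ldots, W_{i-1}) = F(W_1, \ldots, W_{i-1})$, which is precisely what Definition \ref{D:primitive1} requires. One subtlety worth spelling out: Lemma \ref{L:cascade2} is stated for $i = 1, \ldots, k-1$, and the critical-set conditions in Definitions \ref{D:critical2} and \ref{D:primitive1} refer to $F(W_1, \ldots, W_{i-1})$ for $i = 1, \ldots, k$, i.e. to indices up to $k-1$ inside the $F(\cdots)$ expression — so the range matches and no out-of-range term is needed. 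I would also implicitly use, via Lemma \ref{L:cascade1}, that all the objects $F(W_1, \ldots, W_{i-1})$ and $\mathcal{H}(W_1, \ldots, W_{i-1})$ are well-defined along this cascade, which is what makes ``$W_i$ is a generalized critical set of $F(W_1, \ldots, W_{i-1})$'' a meaningful statement.

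The main obstacle is bookkeeping rather than mathematical depth: one must be careful that the cascade $(W_2, \ldots, W_k)$ may live in $W_1$ or in $X \backslash W_1$, and that the predecessor/complementary-tuple structure of $(W_1, W_2, \ldots, W_k)$ in $X$ restricts correctly to the structure of $(W_2, \ldots, W_k)$ in $W_1$ (resp. $X \backslash W_1$) — this is exactly the content packaged into Lemma \ref{L:cascade2}, so invoking it cleanly is the crux. No new estimates or constructions are needed; the proof is a short unwinding of definitions combined with the shift identity $F(W_1, \ldots, W_i) = F(W_1)(W_2, \ldots, W_i)$.
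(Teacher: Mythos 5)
Your proposal is correct and follows essentially the same route as the paper's proof: verify that $(W_1,\ldots,W_k)$ is a cascade in $X$, note that $W_1$ is a minimal generalized critical set of $F=F(W_1,\ldots,W_0)$ by hypothesis, and use the identity $F(W_1,\ldots,W_{i-1})=F(W_1)(W_2,\ldots,W_{i-1})$ from Lemma \ref{L:cascade2} to transfer minimal criticality of $W_i$ for $i=2,\ldots,k$. Your additional remarks on the index ranges and on well-definedness via Lemma \ref{L:cascade1} are sound bookkeeping that the paper leaves implicit.
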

\begin{proof}
$(W_1, W_2, \ldots, W_k)$ is a cascade in $W_0 = X$. By assumption $W_1$ is a minimal generalized 
critical set of $F(W_1, \ldots, W_0)=F$. Using the assumption and Lemma \ref{L:cascade2}, $W_i$ is a 
minimal generalized critical set of $F(W_1, \ldots, W_{i-1})=F(W_1)(W_2, \ldots, W_{i-1})$ for 
$i=2, \ldots, k$.  
\end{proof}

\begin{lemma} \label{L:78}
Let $F$ satisfy the generalized Hall condition and let $W \subset X$ be a minimal generalized critical 
set of $F$. There exists a submapping $F^\prime$ of $F_{\mid W}$ such that $F^\prime$ satisfies 
the generalized Hall condition and the complement mapping $F_{W, G(F^\prime)}$ satisfies the 
generalized Hall condition.
\end{lemma}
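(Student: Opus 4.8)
The plan is to build $F'$ as the submapping $F^{\mathcal{H}_W}$ coming from a carefully chosen distributed Hall collection $\mathcal{H}$ of $F$, and to show that both $F^{\mathcal{H}_W}$ and the complement mapping $F_{W,G(F^{\mathcal{H}_W})}$ satisfy the generalized Hall condition. Since $F$ satisfies the generalized Hall condition and $\sharp X>1$ (as $W$ is a generalized critical set, $\sharp W<\sharp X$), $F$ admits a distributed Hall collection $\mathcal{H}$. With $W=W_1$ a minimal generalized critical set of $F$, set $F'=F^{\mathcal{H}_W}$ and note that $\mathcal{H}_{\mid W}$ is a Hall collection of $F'$ by Lemma \ref{L:HmidW} together with the hypothesis that $\mathcal{H}$ is $W$-distributed (here one uses Lemma \ref{L:distributed1} on the complement side and the easy observation that $\mathcal{H}_{\mid W}$ is automatically a Hall collection of $F_{\mid W}$, hence of the larger submapping $F'$). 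Then $\mathcal{H}_W$ is a Hall collection of $F_{W,G(F')}$, again by Lemma \ref{L:distributed1}.

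Next I would verify the generalized Hall condition recursively for each of the two mappings, using the fact that both have strictly smaller domains ($W\subsetneq X$ and $X\setminus W\ne\emptyset$) so the inductive definition applies. For $F'=F^{\mathcal{H}_W}$ on the vertex set $W$: I claim $\mathcal{H}_{\mid W}$ is a \emph{distributed} Hall collection of $F'$. Take any primitive $(F',\mathcal{H}_{\mid W})$-critical cascade $(W_2,\ldots,W_k)$ in $W$. By Lemma \ref{L:7prep}, $(W_1,W_2,\ldots,W_k)$ with $W_1=W$ is a primitive $(F,\mathcal{H})$-critical cascade in $X$ (here is where minimality of $W$ is essential, and where one must match $F(W_1)=F'$ and $\mathcal{H}(W_1)=\mathcal{H}_{\mid W}$ to the notation of the cascade recursion — this identification is the bookkeeping heart of the argument). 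Since $\mathcal{H}$ is distributed, $\mathcal{H}(W_1,\ldots,W_{i-1})$ is $W_i$-distributed for each $i$; for $i\ge 2$ this is exactly the condition needed for $\mathcal{H}_{\mid W}(W_2,\ldots,W_{i-1})$ via Lemma \ref{L:cascade2}. Hence $\mathcal{H}_{\mid W}$ is distributed, so $F'$ satisfies the generalized Hall condition by Definition \ref{D:Hall2} (or Definition \ref{D:Hall1} if $\sharp W=1$, where one checks $G(F')\ne\emptyset$ directly from $\sharp A_{\{w\}}\ge 1$).

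The symmetric argument handles $F_{W,G(F')}$ on $X\setminus W$: show $\mathcal{H}_W$ is a distributed Hall collection of it. A primitive critical cascade $(W_2,\ldots,W_k)$ for $F_{W,G(F')}$ extends, via the cascade recursion with the second branch ($W_2\subset W_1^c=X\setminus W$), to a primitive $(F,\mathcal{H})$-critical cascade $(W_1,W_2,\ldots,W_k)$ in $X$ by the second half of Lemma \ref{L:7prep}; distributedness of $\mathcal{H}$ then transfers the $W_i$-distributed property down, and Lemma \ref{L:cascade2} identifies $F(W_1)=F_{W,G(F')}$, $\mathcal{H}(W_1)=\mathcal{H}_W$. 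The main obstacle I anticipate is not conceptual but notational: one must be scrupulous that the recursion $F(W_1,\ldots,W_i)$, which \emph{a priori} requires knowing the next term $W_{i+1}$ to decide which branch was taken at step $i$, is applied consistently so that $F(W_1)$ literally equals $F^{\mathcal{H}_W}$ in one branch and the complement mapping in the other — and that the two halves of Lemma \ref{L:7prep} (cascade in $W_1$ versus in $X\setminus W_1$) line up with the two submappings being constructed. Once that alignment is fixed, the proof is just an invocation of Lemmas \ref{L:HmidW}, \ref{L:distributed1}, \ref{L:cascade2} and \ref{L:7prep} together with the inductive Definitions \ref{D:Hall1} and \ref{D:Hall2}.
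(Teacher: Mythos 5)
Your proposal is correct and follows essentially the same route as the paper's proof: take a distributed Hall collection $\mathcal{H}$ of $F$, set $F'=F^{\mathcal{H}_W}$, and use Lemmas \ref{L:HmidW} and \ref{L:distributed1} for the Hall-collection part, then Lemma \ref{L:7prep} to lift primitive critical cascades of $F'$ (resp. $F_{W,G(F')}$) to primitive $(F,\mathcal{H})$-critical cascades in $X$ and Lemma \ref{L:cascade2} to transfer the $W_i$-distributed property. The only quibble is the parenthetical describing $F'$ as a ``larger submapping'' of $F_{\mid W}$ — it is a submapping, and Lemma \ref{L:HmidW} is exactly what shows $\mathcal{H}_{\mid W}$ still lands inside $G(F^{\mathcal{H}_W})$ — but this does not affect the argument.
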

\begin{proof}
By assumption $F$ admits a distributed Hall collection $\mathcal{H}$. We define a
submapping of $F_{\mid W}$ by $F^\prime = F^{\mathcal{H}_W}$ and define $W_1 = W$.

Using Lemma \ref{L:HmidW}, $\mathcal{H}_{\mid W}$ is a Hall collection of $F^{\mathcal{H}_W}$. 
Let $(W_2, \ldots, W_k)$, $k \in \mathbb{N}$, $k \ge 2$, be a primitive 
$(F^{\mathcal{H}_W},\mathcal{H}_{\mid W})$-critical cascade in $W$.  
Using Lemma \ref{L:7prep}, $(W_1, \ldots, W_k)$ is a primitive $(F,\mathcal{H})$-critical cascade in 
$X$. Using Lemma \ref{L:cascade2} the collection
\[
\mathcal{H}_{\mid W_1}(W_2, \ldots, W_{i-1})=\mathcal{H}(W_1, \ldots, W_{i-1})
\]
of $F^{\mathcal{H}_{W_1}}(W_2, \ldots, W_{i-1})=F(W_1, \ldots, W_{i-1})$ is $W_i$-distributed 
for $i=2, \ldots, k$, i.e., $F^\prime=F^{\mathcal{H}_{W}}=F^{\mathcal{H}_{W_1}}$ satisfies the 
generalized Hall condition.

By Lemma \ref{L:distributed1}, $\mathcal{H}_W$ is a Hall collection of $F_{W,G(F^\prime)}$.
Let $(W_2, \ldots, W_k)$, $k \in \mathbb{N}$, $k \ge 2$, be a primitive 
$(F_{W,G(F^\prime)},\mathcal{H}_W)$-critical cascade in $X \backslash W$. 
Using Lemma \ref{L:7prep}, $(W_1, \ldots, W_k)$ is a primitive $(F,\mathcal{H})$-critical cascade in 
$X$. Using Lemma \ref{L:cascade2} the collection 
\[
\mathcal{H}_{W_1}(W_2, \ldots, W_{i-1}) = \mathcal{H}(W_1, \ldots, W_{i-1})  
\]
of $F_{W_1,G(F^\prime)}(W_2, \ldots, W_{i-1})=F(W_1, \ldots, W_{i-1})$ is $W_i$-distributed for 
$i=2, \ldots, k$, i.e., $F_{W,G(F^\prime)}=F_{W_1,G(F^\prime)}$ satisfies the generalized Hall condition.
\end{proof}

Lemma \ref{L:79} shows the sufficiency part of the generalized marriage theorem.

\begin{lemma} \label{L:79}
Let $F$ satisfy the generalized Hall condition. $F$ admits a disparate selection.
\end{lemma}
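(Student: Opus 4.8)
The plan is to induct on $\sharp X$. The base case $\sharp X = 1$ is immediate: by Definition \ref{D:Hall1}, satisfying the generalized Hall condition means $G(F) \ne \emptyset$, so we may pick any $y \in F(x)$ for the unique $x \in X$, and the single point $(x,y)$ trivially forms a disparate set. For the inductive step, assume the claim holds for all ground sets of size $< \sharp X$, and let $F$ satisfy the generalized Hall condition on $X$ with $\sharp X > 1$. Split into the two cases Halmos and Vaughan used.

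First, if $F$ admits no generalized critical set: pick any $(x,y) \in G(F)$ — this exists since a distributed Hall collection $\mathcal H$ forces $A_{\{x\}} \ne \emptyset$ for each $x$, hence $G(F) \ne \emptyset$. By Lemma \ref{L:74}, $F_{\{x\},\{(x,y)\}}$ satisfies the generalized Hall condition; it is defined on $X \backslash \{x\}$, a set of size $\sharp X - 1 < \sharp X$, so the induction hypothesis yields a disparate selection $s'$ of $F_{\{x\},\{(x,y)\}}$. Then Lemma \ref{L:complement1} (implication (ii) $\Rightarrow$ (i)) upgrades $s'$ to a disparate selection $s$ of $F$ with $s(x) = y$. (Equivalently, one extends $s'$ by $s(x)=y$ and invokes Lemma \ref{L:complement2} with $W = \{x\}$, $Z = \{(x,y)\}$.)

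Second, if $F$ admits a generalized critical set, then it admits a minimal one, say $W$, with $\emptyset \ne W \subsetneq X$. Apply Lemma \ref{L:78}: there is a submapping $F' = F^{\mathcal H_W}$ of $F_{\mid W}$ such that both $F'$ and the complement mapping $F_{W,G(F')}$ satisfy the generalized Hall condition. Since $F'$ is defined on $W$ with $\sharp W < \sharp X$, the induction hypothesis gives a disparate selection $s_1$ of $F'$; since $F_{W,G(F')}$ is defined on $X \backslash W$ with $\sharp(X \backslash W) < \sharp X$ (because $W \ne \emptyset$), the induction hypothesis gives a disparate selection $s_2$ of $F_{W,G(F')}$. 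Define $s : X \to Y$ by $s(z) = s_1(z)$ for $z \in W$ and $s(z) = s_2(z)$ for $z \in X \backslash W$. Then $s_{\mid W} = s_1$ is disparate with $G(s_{\mid W}) \subset G(F') = Z$, and $s_{\mid (X \backslash W)} = s_2$ is disparate and is a selection of $F_{W,Z}$; so Lemma \ref{L:complement2} (with this $W$ and $Z = G(F')$) yields that $s$ is a disparate selection of $F$.

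The main obstacle is making sure the pieces assembled by Lemma \ref{L:78} really do satisfy the \emph{inductive} hypothesis on strictly smaller ground sets — i.e., that "satisfies the generalized Hall condition" in the conclusion of Lemma \ref{L:78} is the version defined for $W$ and for $X \backslash W$ respectively, not some circular reference back to $F$ on $X$. This is exactly why Definition \ref{D:Hall2}, Definition \ref{D:distributed2} and the cascade machinery of Section \ref{S:cascades} were set up as they were: a distributed Hall collection of $F$ restricts/pushes forward (via $\mathcal H_{\mid W}$ and $\mathcal H_W$, through Lemmas \ref{L:HmidW}, \ref{L:distributed1}, \ref{L:cascade2}, \ref{L:cascadeHall}) to a distributed Hall collection of each smaller mapping, and the primitive-critical-cascade bookkeeping guarantees the "distributed" requirement descends correctly. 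Once one trusts Lemma \ref{L:78}, the induction is a clean two-case split with the size drop $\sharp X - 1$ (first case) or $\max(\sharp W, \sharp(X\backslash W)) < \sharp X$ (second case).
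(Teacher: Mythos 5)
Your proposal is correct and follows essentially the same argument as the paper: induction on $\sharp X$, with the two-case split (no generalized critical set, handled via Lemma \ref{L:74} and Lemma \ref{L:complement1}; a minimal generalized critical set $W$, handled via Lemma \ref{L:78} and the gluing of Lemma \ref{L:complement2} with $Z = G(F^\prime)$). The extra remarks on why $G(F) \ne \emptyset$ and on the non-circularity of Lemma \ref{L:78} are sound but not needed beyond what the paper already establishes.
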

\begin{proof}
We prove the statement by induction on the number $\sharp X$ of elements in $X$. The statement is 
true for $\sharp X = 1$ (compare Definition \ref{D:Hall1}).

Let $\sharp X > 1$. By induction hypothesis the statement is true for subsets 
$X^\prime \subset X$ with $\sharp X^\prime < \sharp X$. We distinguish two cases.  \\
Case 1: $F$ does not admit a generalized critical set.  \\
Choose $(x,y) \in G(F)$ and apply Lemma \ref{L:74}. Since $\sharp (X \backslash \{x\}) < \sharp X$ 
we apply the induction hypothesis and there exists a disparate selection $s$ of $F_{\{x\},\{(x,y)\}}$.
The statement follows from Lemma \ref{L:complement1}.  \\
Case 2: $F$ admits a generalized critical set.  \\
Then $F$ admits a minimal generalized critical set $W \subset X$ and  we know $W \ne \emptyset$ 
and $W \ne X$. Using Lemma \ref{L:78} there exists a submapping $F^\prime$ of $F_{\mid W}$ 
such that $F^\prime$ satisfies the generalized Hall condition and $F_{W, G(F^\prime)}$ satisfies 
the generalized Hall condition.

By induction hypothesis there exist disparate selections $s_1$ of $F^\prime$ and $s_2$ of 
$F_{W, G(F^\prime)}$. We define a selection $s$ of $F$ by $s_{\mid W}=s_1$ and 
$s_{\mid (X \backslash W)}=s_2$. Using Lemma \ref{L:complement2} with $Z=G(F^\prime)$, 
$s$ is disparate.
\end{proof}

This completes the sufficiency part of the generalized marriage theorem.

                                        %
                                        %
\section{Necessity} \label{S:necessity}
This section contains the lemmas, which are used in the necessity part of the generalized marriage 
theorem. The results of this section are subsumed in Lemma \ref{L:87}.

Any selection $s$ of $F$ induces a collection of $F$ by
\[
\mathcal{H}^s = \{G(s_{\mid W}) \subset G(F_{\mid W}) \mid W \subset X, W \ne \emptyset\}.
\]

We collect some elementary properties of disparate selections and the induced collection $\mathcal{H}^s$.

\begin{lemma} \label{L:81}
Let $s$ be a disparate selection of $F$. $\mathcal{H}^s$ is a Hall collection of $F$.
\end{lemma}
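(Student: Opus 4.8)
The plan is to verify the two defining requirements of a Hall collection directly from the fact that $s$ is a disparate selection. Recall $\mathcal{H}^s = \{G(s_{\mid W}) \subset G(F_{\mid W}) \mid W \subset X, W \ne \emptyset\}$, so I first need $G(s_{\mid W})$ to be a genuine subset of $G(F_{\mid W})$ — but this is immediate, since $s$ being a selection means $s(x) \in F(x)$ for every $x$, hence $(x, s(x)) \in G(F_{\mid W})$ for every $x \in W$. So $\mathcal{H}^s$ is a collection of $F$ to begin with, and only the Hall condition from Definition \ref{D:collection} is at stake: for each nonempty $W \subset X$, (a) $G(s_{\mid W})$ is a disparate set, and (b) $\sharp G(s_{\mid W}) \ge \sharp W$.

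For (a): since $s$ is a disparate selection, $G(s)$ is a disparate set by Definition \ref{D:disparateSel}. Now $G(s_{\mid W}) = \{(x, s(x)) \mid x \in W\} \subset G(s)$, and any subset of a disparate set is disparate (any two distinct points of the subset are two distinct points of $G(s)$, hence disparate, by Definition \ref{D:disparateSet}). This gives (a) with essentially no work. For (b): the map $x \mapsto (x, s(x))$ is an injection from $W$ onto $G(s_{\mid W})$ — distinct vertices $x \ne x'$ give points with distinct first coordinates, hence distinct points of $X \times Y$ — so $\sharp G(s_{\mid W}) = \sharp W \ge \sharp W$. Thus both conditions hold for every nonempty $W$, and $\mathcal{H}^s$ is a Hall collection of $F$.

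There is no real obstacle here; the statement is essentially unpacking definitions. The only point requiring a moment's care is that $s$ being \emph{disparate} (not merely a selection) is exactly what is needed for (a) — if $s$ were an arbitrary selection, $G(s_{\mid W})$ need not be disparate whenever two adjacent vertices receive the same value — whereas the cardinality bound (b) uses only that $s$ is single-valued. I would state the proof in two short sentences, invoking Definition \ref{D:disparateSet} for the subset-of-disparate-is-disparate remark and noting the injectivity of $x \mapsto (x,s(x))$ for the counting bound.
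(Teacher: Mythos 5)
Your proof is correct and follows exactly the same route as the paper's: $G(s_{\mid W})$ is a subset of the disparate set $G(s)$, hence disparate, and $\sharp G(s_{\mid W}) = \sharp W$ by injectivity of $x \mapsto (x,s(x))$. The paper states this in one sentence; your additional remarks (that $\mathcal{H}^s$ is a collection, and where disparateness of $s$ is actually used) are accurate but not essential.
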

\begin{proof}
The set $G(s_{\mid W}) \subset G(s)$ is a disparate set and $\sharp G(s_{\mid W}) = \sharp W$
for each $W \subset X$.
\end{proof}

\begin{lemma} \label{L:82}
Let $s$ be a disparate selection of $F$ and let $W \subset X$ be a nonempty set. 
\begin{enumerate}[(i)] 
\item $\mathcal{H}^s_{\mid W}=\{G(s_{\mid V}) \mid V \subset W, V \ne \emptyset \}$.
\item $\bigcup_{A \in \mathcal{H}^s_{\mid W}}A=G(s_{\mid W})$.
\item $\mathcal{H}^s_W=\{G(s_{\mid V}) \mid V \subset X \backslash W, V \ne \emptyset \}$.
\item $\bigcup_{A \in \mathcal{H}^s_W}A=G(s_{\mid (X \backslash W)})$.
\end{enumerate}
\end{lemma}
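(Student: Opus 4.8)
The plan is to unwind the definitions in Section \ref{S:collection} for the special collection $\mathcal{H}^s$. For (i): by definition $\mathcal{H}^s_{\mid W} = \{A_V \in \mathcal{H}^s \mid V \subset W, V \ne \emptyset\}$, and since $A_V = G(s_{\mid V})$ for every nonempty $V \subset X$, this is exactly $\{G(s_{\mid V}) \mid V \subset W, V \ne \emptyset\}$. For (ii): from (i), $\bigcup_{A \in \mathcal{H}^s_{\mid W}} A = \bigcup_{\emptyset \ne V \subset W} G(s_{\mid V})$. Since $s$ is point-valued, $G(s_{\mid V}) = \{(v, s(v)) \mid v \in V\}$, so the union over all nonempty $V \subset W$ is $\{(w, s(w)) \mid w \in W\} = G(s_{\mid W})$; here I would just note the two inclusions, the singletons $V = \{w\}$ for $w \in W$ giving ``$\supset$'' and $V \subset W$ giving ``$\subset$''.

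For (iii) and (iv) I would combine the definition of $\mathcal{H}^s_W$ with parts (i) and (ii). Recall
\[
\mathcal{H}^s_W = \{ A_{W \cup V} \cap compl(\textstyle\bigcup_{A \in \mathcal{H}^s_{\mid W}} A) \cap (V \times Y) \mid V \subset X \backslash W, V \ne \emptyset \}.
\]
Fix a nonempty $V \subset X \backslash W$. By (ii) the inner union is $G(s_{\mid W})$, and $A_{W \cup V} = G(s_{\mid W \cup V}) = G(s_{\mid W}) \cup G(s_{\mid V})$ since $W$ and $V$ are disjoint. Intersecting with $V \times Y$ kills the $G(s_{\mid W})$ part (as $s_{\mid V}$ has first coordinates in $V$ and $W \cap V = \emptyset$), leaving $G(s_{\mid V}) \cap compl(G(s_{\mid W}))$. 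Now I claim $G(s_{\mid V}) \subset compl(G(s_{\mid W}))$: indeed $s$ is disparate, so $G(s) = G(s_{\mid V}) \cup G(s_{\mid W}) \cup \cdots$ is a disparate set, hence any two distinct points of $G(s)$ are disparate; in particular every point of $G(s_{\mid V})$ is disparate from every point of $G(s_{\mid W})$ (they are distinct since $V \cap W = \emptyset$), which is exactly membership in $compl(G(s_{\mid W}))$ by Definition \ref{D:complement} — one may also invoke Lemma \ref{L:35} here directly. Therefore the term reduces to $G(s_{\mid V})$, proving (iii). Then (iv) follows from (iii) exactly as (ii) followed from (i), applied to $X \backslash W$ in place of $W$.

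I do not anticipate a genuine obstacle; the only point requiring a little care is the step $G(s_{\mid V}) \subset compl(G(s_{\mid W}))$, where one must use that $s$ is \emph{disparate} (not merely a selection) together with the disjointness $V \cap W = \emptyset$ to ensure the relevant pairs of points are genuinely distinct before concluding they are disparate. Everything else is bookkeeping with the definitions of $F(W) = \bigcup_{x \in W} F(x)$, restriction, and the graph of a point-valued mapping.
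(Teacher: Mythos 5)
Your proposal is correct and follows essentially the same route as the paper: (i) and (ii) by unwinding the definition of $\mathcal{H}^s$, then for (iii) the same chain of equalities, using $G(s_{\mid V}) \subset compl(G(s_{\mid W}))$ (from disparateness of $s$ and $V \cap W = \emptyset$) to discard the complement factor, and (iv) from (iii) as (ii) from (i). No gaps.
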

\begin{proof}
The statements ``(i)" and ``(ii)" are a direct consequence of the definition of $\mathcal{H}^s$.
Using ``(ii)" and since $s$ is a disparate selection
$G(s_{\mid (X \backslash W)}) \subset compl(G(s_{\mid W}))
= compl(\bigcup_{A \in \mathcal{H}^s_{\mid W}}A)$.
We obtain
\begin{align*}
\mathcal{H}^s_W
& = \{G(s_{\mid W \cup V}) \cap compl(\bigcup_{A \in \mathcal{H}^s_{\mid W}}A) \cap (V \times Y)
\mid V \subset X \backslash W, V \ne \emptyset \}  \\
& = \{G(s_{\mid V}) \cap compl(\bigcup_{A \in \mathcal{H}^s_{\mid W}}A)
\mid V \subset X \backslash W, V \ne \emptyset \}  \\
& = \{G(s_{\mid V}) \mid V \subset X \backslash W, V \ne \emptyset \},
\end{align*}
i.e., ``(iii)" and this implies ``(iv)".
\end{proof}

\begin{lemma} \label{L:distributed2}
Let $s$ be a disparate selection of $F$ and let $W \subset X$ be a nonempty set. $\mathcal{H}^s$
is $W$-distributed.
\end{lemma}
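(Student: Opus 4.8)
The goal is to show that if $s$ is a disparate selection of $F$ and $W \subset X$ is nonempty, then the induced Hall collection $\mathcal{H}^s$ is $W$-distributed in the sense of Definition~\ref{D:distributed1}. The plan is to unfold the definition and exploit the very simple description of $\mathcal{H}^s_{\mid W}$ and its union provided by Lemma~\ref{L:82}. Fix a nonempty $V \subset X \backslash W$. By Lemma~\ref{L:82}(ii), $\bigcup_{A \in \mathcal{H}^s_{\mid W}} A = G(s_{\mid W})$, and the relevant set $A_{W \cup V}$ in the collection $\mathcal{H}^s$ is $G(s_{\mid (W \cup V)})$. So the quantity to be bounded is
\[
\sharp\!\left( G(s_{\mid (W \cup V)}) \cap \big( (W \times Y) \cup ( hull(G(s_{\mid W})) \cap (V \times Y) ) \big) \right).
\]

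First I would intersect with $W \times Y$: since $G(s_{\mid (W \cup V)}) \cap (W \times Y) = G(s_{\mid W})$, this part contributes exactly $\sharp W$ points. The crux is then to show that the remaining part, $G(s_{\mid (W \cup V)}) \cap hull(G(s_{\mid W})) \cap (V \times Y)$, is empty. A point here would be of the form $(v, s(v))$ with $v \in V \subset X \backslash W$, lying in $hull(G(s_{\mid W}))$; that is, $(v, s(v))$ is not disparate from some $(w, s(w))$ with $w \in W$. Since $v \ne w$ (as $v \notin W$), ``not disparate'' forces $s(v) = s(w)$ and $\{v,w\} \in E$ — but that contradicts the disparateness of $s$, since $(v, s(v))$ and $(w, s(w))$ are two distinct points of the disparate set $G(s)$. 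Hence this intersection is empty, and the whole cardinality equals $\sharp W \le \sharp W$, which is what Definition~\ref{D:distributed1} demands.

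I expect no real obstacle here: the statement is essentially a bookkeeping consequence of the definitions together with Lemma~\ref{L:82}. The only mild care needed is the bookkeeping with the union of the hull and $W \times Y$, and making sure that a point of $G(s_{\mid (W\cup V)})$ outside $W \times Y$ is automatically in $V \times Y$ (which holds since its first coordinate lies in $(W \cup V) \backslash W = V$). The argument that $hull(G(s_{\mid W}))$ meets no point $(v, s(v))$ with $v \in V$ is the substantive step, and it reduces immediately to the definition of disparateness of the selection $s$ together with $v \notin W$. Thus I would write the proof as: fix $V$, invoke Lemma~\ref{L:82}(ii) to identify $\bigcup_{A \in \mathcal{H}^s_{\mid W}} A$ with $G(s_{\mid W})$, split $G(s_{\mid (W\cup V)})$ along $W \times Y$ and $V \times Y$, observe the $V \times Y$ part avoids $hull(G(s_{\mid W}))$ by disparateness, and conclude the cardinality is exactly $\sharp W$.
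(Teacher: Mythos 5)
Your proof is correct and follows essentially the same route as the paper: identify $\bigcup_{A\in\mathcal{H}^s_{\mid W}}A$ with $G(s_{\mid W})$ via Lemma \ref{L:82}(ii), reduce the quantity to $\sharp(G(s_{\mid W})\cup(G(s_{\mid V})\cap hull(G(s_{\mid W}))))$, and observe that the second part is empty because disparateness of $s$ places $G(s_{\mid V})$ in $compl(G(s_{\mid W}))$. The paper leaves that last emptiness step implicit; you spell it out, which is a fair elaboration rather than a different argument.
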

\begin{proof}
Let $V \subset X \backslash W$ be a nonempty set. Using Lemma \ref{L:82} ``(ii)"
\begin{align*}
&\sharp (G(s_{\mid W \cup V}) 
\cap ((W \times Y) \cup (hull(\bigcup_{A \in\mathcal{H}^s_{\mid W}}A)
\cap (V \times Y))))  \\
&= \sharp (G(s_{\mid W}) \cup (G(s_{\mid V}) \cap hull(G(s_{\mid W}))))  \\
&= \sharp G(s_{\mid W})  \\
&= \sharp W.  
\end{align*}
\end{proof}

Please note, in the proof of Lemma \ref{L:distributed2} we do not require $W$ to be a generalized 
critical set or a minimal generalized critical set. A disparate selection induces a $W$-distributed Hall 
collection for any nonempty set $W \subset X$.

\begin{lemma} \label{L:83}
Let $s$ be a disparate selection of $F$ and let $W \subset X$ be a nonempty set. 
\begin{enumerate}[(i)] 
\item $\mathcal{H}^{s_{\mid W}}=\mathcal{H}^s_{\mid W}$.
\item $s_{\mid W}$ is a disparate selection of $F^{\mathcal{H}^s_W}$.
\item $\mathcal{H}^{s_{\mid (X \backslash W)}}=\mathcal{H}^s_W$.
\item $s_{\mid (X \backslash W)}$ is a disparate selection of $F_{W,G(F^{\mathcal{H}^s_W})}$.
\end{enumerate}
\end{lemma}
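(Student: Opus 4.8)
The plan is to prove the four statements in order, since (i) feeds into (ii) and (iii) feeds into (iv), and the two halves are parallel (one for $W$, one for $X \backslash W$). For (i), I would unwind the definitions: $\mathcal{H}^{s_{\mid W}}$ is by definition $\{G((s_{\mid W})_{\mid V}) \mid V \subset W, V \ne \emptyset\}$, and $(s_{\mid W})_{\mid V} = s_{\mid V}$ for $V \subset W$, so this equals $\{G(s_{\mid V}) \mid V \subset W, V \ne \emptyset\}$, which is exactly $\mathcal{H}^s_{\mid W}$ by Lemma \ref{L:82}(i). Statement (iii) is the same kind of bookkeeping but uses Lemma \ref{L:82}(iii): $\mathcal{H}^{s_{\mid (X\backslash W)}} = \{G((s_{\mid(X\backslash W)})_{\mid V}) \mid V \subset X\backslash W, V\ne\emptyset\} = \{G(s_{\mid V}) \mid V \subset X\backslash W, V\ne\emptyset\} = \mathcal{H}^s_W$.

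For (ii), I want to show $G(s_{\mid W}) \subset G(F^{\mathcal{H}^s_W})$, since $s_{\mid W}$ is already known to be a disparate point-valued mapping on $W$ and a selection of $F_{\mid W}$. By definition $G(F^{\mathcal{H}^s_W}) = G(F_{\mid W}) \cap compl(\bigcup_{A \in \mathcal{H}^s_W} A)$. By Lemma \ref{L:82}(iv), $\bigcup_{A \in \mathcal{H}^s_W} A = G(s_{\mid (X\backslash W)})$, and since $s$ is disparate, $G(s_{\mid W})$ and $G(s_{\mid (X\backslash W)})$ are disparate sets whose union $G(s)$ is disparate; applying Lemma \ref{L:35} with $A = G(s_{\mid(X\backslash W)})$ and $B = G(s_{\mid W})$ gives $G(s_{\mid W}) \subset compl(G(s_{\mid(X\backslash W)})) = compl(\bigcup_{A\in\mathcal{H}^s_W}A)$. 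Combined with $G(s_{\mid W}) \subset G(F_{\mid W})$ this yields the claim.

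For (iv), I want $s_{\mid(X\backslash W)}$ to be a disparate selection of $F_{W,G(F^{\mathcal{H}^s_W})}$, whose graph by definition is $G(F_{\mid(X\backslash W)}) \cap compl(G(F^{\mathcal{H}^s_W}))$. Again $s_{\mid(X\backslash W)}$ is disparate and a selection of $F_{\mid(X\backslash W)}$, so I only need $G(s_{\mid(X\backslash W)}) \subset compl(G(F^{\mathcal{H}^s_W}))$. From part (ii), $G(s_{\mid W}) \subset G(F^{\mathcal{H}^s_W})$, and monotonicity of $compl$ gives $compl(G(F^{\mathcal{H}^s_W})) \subset compl(G(s_{\mid W}))$ — that is the wrong direction, so instead I argue directly: take $a \in G(s_{\mid(X\backslash W)})$ and $b \in G(F^{\mathcal{H}^s_W}) \subset G(F_{\mid W}) \cap compl(G(s_{\mid(X\backslash W)}))$ (the last inclusion being $G(F^{\mathcal{H}^s_W}) = G(F_{\mid W})\cap compl(\bigcup_{A\in\mathcal{H}^s_W}A)$ together with Lemma \ref{L:82}(iv)); then $b \in compl(G(s_{\mid(X\backslash W)}))$ means $a$ and $b$ are disparate, so $a \in compl(G(F^{\mathcal{H}^s_W}))$. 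Hence $G(s_{\mid(X\backslash W)}) \subset G(F_{W,G(F^{\mathcal{H}^s_W})})$.

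The only mild subtlety — the main obstacle, such as it is — is keeping the two uses of $compl$ pointed in the right direction: $compl$ is order-reversing, so to place $G(s_{\mid W})$ or $G(s_{\mid(X\backslash W)})$ inside a complement I must go back to the pointwise definition or to Lemma \ref{L:35} rather than naively applying monotonicity. Once the disparateness of $G(s)$ is split via Lemma \ref{L:35} and Lemma \ref{L:82}(ii),(iv) is invoked to identify the relevant unions, everything is routine unwinding of definitions.
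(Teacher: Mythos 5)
Your proposal is correct and follows essentially the same route as the paper: (i) and (iii) by unwinding definitions via Lemma \ref{L:82}, (ii) by identifying $\bigcup_{A\in\mathcal{H}^s_W}A$ with $G(s_{\mid(X\backslash W)})$ and placing $G(s_{\mid W})$ in its complement, and (iv) by the reverse inclusion into $compl(G(F^{\mathcal{H}^s_W}))$. The only cosmetic difference is that in (iv) you argue the double-complement step pointwise where the paper invokes $A\subset compl(compl(A))$ (Lemma \ref{L:34}); the content is identical.
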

\begin{proof}
``(i)" This statement is clear.  \\
``(ii)" $G(s_{\mid W})$ is a disparate set. Using Lemma \ref{L:82} ``(iv)"
\[
G(s_{\mid W}) \subset compl(G(s_{\mid (X \backslash W)})) = compl(\bigcup_{A \in \mathcal{H}^s_W}A)
\]
and by definition of $F^{\mathcal{H}^s_W}$, $G(s_{\mid W}) \subset G(F^{\mathcal{H}^s_W})$,
i.e., $s_{\mid W}$ is a selection of $F^{\mathcal{H}^s_W}$.  \\
``(iii)" $\mathcal{H}^{s_{\mid (X \backslash W)}} = \{G(s_{\mid V}) \mid V \subset X \backslash W, V \ne \emptyset \}$
and the statement follows from Lemma \ref{L:82} ``(iii)".  \\
``(iv)" $G(s_{\mid (X \backslash W)})$ is a disparate set. By definition of $F^{\mathcal{H}^s_W}$, 
$G(F^{\mathcal{H}^s_W}) \subset compl(\bigcup_{A \in \mathcal{H}^s_W}A)$ and using 
Lemma \ref{L:82} ``(iv)",
\[
G(s_{\mid (X \backslash W)}) 
= \bigcup_{A \in \mathcal{H}^s_W}A 
\subset compl(compl(\bigcup_{A \in \mathcal{H}^s_W}A))
\subset compl(G(F^{\mathcal{H}^s_W})).
\]
By definition of $F_{W,G(F^{\mathcal{H}^s_W})}$, 
$G(s_{\mid (X \backslash W)}) \subset G(F_{W,G(F^{\mathcal{H}^s_W})})$, i.e.,
$s_{\mid (X \backslash W)}$ is a selection of $F_{W,G(F^{\mathcal{H}^s_W})}$.
\end{proof}

\begin{lemma} \label{L:84}
Let $s$ be a disparate selection of $F$ and let $(W_1, \ldots, W_k)$, $k \in \mathbb{N}$, be a 
cascade in $W_0=X$. 
\begin{enumerate}[(i)] 
\item $\mathcal{H}^{s_{\mid pre(i)}}=\mathcal{H}^s(W_1, \ldots, W_{i-1})$ and
\item $s_{\mid pre(i)}$ is a disparate selection of $F(W_1, \ldots, W_{i-1})$
\end{enumerate}
for $i=1, \ldots, k$.
\end{lemma}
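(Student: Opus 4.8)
The plan is to prove Lemma \ref{L:84} by induction on $i$, in close parallel to the proof of Lemma \ref{L:cascade1}, using the restriction results of Lemma \ref{L:83} as the inductive engine. For $i=1$ we have $pre(1)=W_0=X$ (interpreting $W_0^c=W_0$), and $\mathcal{H}^s(W_1,\ldots,W_0)=\mathcal{H}^s$, $F(W_1,\ldots,W_0)=F$ by convention, so both claims hold trivially since $s$ is a disparate selection of $F$ and $\mathcal{H}^s$ is its induced collection. For the inductive step, suppose the statement holds for $i-1$, so $\mathcal{H}^{s_{\mid pre(i-1)}}=\mathcal{H}^s(W_1,\ldots,W_{i-2})$ and $s_{\mid pre(i-1)}$ is a disparate selection of $F(W_1,\ldots,W_{i-2})$. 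By Lemma \ref{L:pre}, $pre(i-1)=W_{i-1}\cup W_{i-1}^c$, so the mapping $F(W_1,\ldots,W_{i-2})$ is defined on $W_{i-1}\cup W_{i-1}^c$ (consistent with Lemma \ref{L:cascade1}(ii)).

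Then I would split into the two cases of the cascade definition. If $W_i\subset W_{i-1}$, then $pre(i)=W_{i-1}$, and I apply Lemma \ref{L:83}(i) and (ii) to the disparate selection $s_{\mid pre(i-1)}$ of $F(W_1,\ldots,W_{i-2})$ with the ``$W$'' of that lemma taken to be $W_{i-1}$ (a nonempty subset of $pre(i-1)$, so $pre(i-1)\setminus W_{i-1}=W_{i-1}^c$ plays the role of $X\setminus W$). This gives $\mathcal{H}^{s_{\mid W_{i-1}}}=(\mathcal{H}^{s_{\mid pre(i-1)}})_{\mid W_{i-1}}$ and that $s_{\mid W_{i-1}}$ is a disparate selection of $F(W_1,\ldots,W_{i-2})^{(\mathcal{H}^{s_{\mid pre(i-1)}})_{W_{i-1}}}$. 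Combining with the induction hypothesis and the definitions of $\mathcal{H}^s(W_1,\ldots,W_{i-1})$ and $F(W_1,\ldots,W_{i-1})$ (which in this case is $\Tilde F(W_1,\ldots,W_{i-1})$), both parts follow. If instead $W_i\subset W_{i-1}^c$, then $pre(i)=W_{i-1}^c$, and I apply Lemma \ref{L:83}(iii) and (iv) with the same choice $W=W_{i-1}$, so that $X\setminus W$ there corresponds to $W_{i-1}^c=pre(i)$; this yields $\mathcal{H}^{s_{\mid W_{i-1}^c}}=(\mathcal{H}^{s_{\mid pre(i-1)}})_{W_{i-1}}$ and that $s_{\mid W_{i-1}^c}$ is a disparate selection of $F(W_1,\ldots,W_{i-2})_{W_{i-1},G(F(W_1,\ldots,W_{i-2})^{(\mathcal{H}^{s_{\mid pre(i-1)}})_{W_{i-1}}})}$, which by the induction hypothesis and the definitions is exactly the claim for $i$.

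The main obstacle I anticipate is purely bookkeeping: making sure that when I invoke Lemma \ref{L:83} on the restricted problem, the ambient vertex set is $pre(i-1)$ rather than $X$, so that ``$X\setminus W$'' in the statement of Lemma \ref{L:83} must be read as $pre(i-1)\setminus W_{i-1}$, and verifying via Lemma \ref{L:pre} that this equals $W_{i-1}^c$. One must also check that $W_{i-1}$ is genuinely nonempty, which is part of the cascade/critical-set hypotheses (generalized critical sets are nonempty by Definition \ref{D:critical}), and that the inductive application of Lemma \ref{L:83} is legitimate because $s_{\mid pre(i-1)}$ has already been shown to be a \emph{disparate} selection of $F(W_1,\ldots,W_{i-2})$. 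Once the indices are aligned, each case is a one-line substitution, exactly mirroring the structure of Lemmas \ref{L:cascade1} and \ref{L:cascade2}, with Lemma \ref{L:83} supplying the step that those lemmas obtained from Lemmas \ref{L:HmidW} and \ref{L:H_W}.
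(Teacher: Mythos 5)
Your proposal is correct and follows essentially the same route as the paper: induction on $i$, with Lemma \ref{L:83} applied to the disparate selection $s_{\mid pre(i-1)}$ of $F(W_1,\ldots,W_{i-2})$ with $W=W_{i-1}$ (so that ``$X\setminus W$'' is read as $pre(i-1)\setminus W_{i-1}=W^c_{i-1}$), followed by the same two-case split according to $W_i\subset W_{i-1}$ or $W_i\subset W^c_{i-1}$. The bookkeeping points you flag are exactly the ones the paper handles via Lemma \ref{L:pre} and the substitution $pre(i-1)\rightarrow X$.
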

\begin{proof}
Let $(W^c_1, \ldots, W^c_k)$ be the complementary tuple of the cascade $(W_1, \ldots, W_k)$.
We prove the statements by induction on $i = 1, \ldots , k$. The statements are true for $i=1$. 
Let $2 \le i \le k$. By induction hypothesis ``(ii)", $s_{\mid pre(i-1)}$ is a disparate selection of 
$F(W_1, \ldots, W_{i-2})$ and we apply Lemma \ref{L:83}, where $pre(i-1) \rightarrow X$,
$W_{i-1} \rightarrow W$, $s_{\mid pre(i-1)} \rightarrow s$ and 
$F(W_1, \ldots, W_{i-2}) \rightarrow F$. We obtain
\begin{enumerate}[(a)] 
\setlength{\itemindent}{.8cm}  
\item $\mathcal{H}^{s_{\mid W_{i-1}}}=\mathcal{H}^{s_{\mid  pre(i-1)}}_{\mid W_{i-1}}$.
\item $s_{\mid W_{i-1}}$ is a disparate selection of 
$F(W_1, \ldots, W_{i-2})^{\mathcal{H}^{s_{\mid  pre(i-1)}}_{W_{i-1}}}$.
\item $\mathcal{H}^{s_{\mid (pre(i-1) \backslash W_{i-1})}}
=\mathcal{H}^{s_{\mid  pre(i-1)}}_{W_{i-1}}$. 
\item $s_{\mid (pre(i-1) \backslash W_{i-1})}$ is a disparate selection of 
\[
F(W_1, \ldots, W_{i-2})
_{W_{i-1},G(F(W_1, \ldots, W_{i-2})^{\mathcal{H}^{s_{\mid  pre(i-1)}}_{W_{i-1}}})}.
\]
\end{enumerate}
We distinguish two cases.   \\
Case 1: $W_i \subset W_{i-1}$.  \\
Using $pre(i)=W_{i-1}$, ``(a)" and induction hypothesis ``(i)" we obtain
\[
\mathcal{H}^{s_{\mid pre(i)}}
=\mathcal{H}^{s_{\mid W_{i-1}}}
=\mathcal{H}^{s_{\mid pre(i-1)}}_{\mid W_{i-1}}
\]
\[
=\mathcal{H}^s(W_1, \ldots, W_{i-2})_{\mid W_{i-1}}
=\mathcal{H}^s(W_1, \ldots, W_{i-1}).
\]
Using $pre(i)=W_{i-1}$, ``(b)" and induction hypothesis ``(i)", $s_{\mid pre(i)}$ is a disparate
selection of 
\[
F(W_1, \ldots, W_{i-1}) 
=F(W_1, \ldots, W_{i-2})^{\mathcal{H}^s(W_1, \ldots, W_{i-2})_{W_{i-1}}}
\]
\[
=F(W_1, \ldots, W_{i-2})^{\mathcal{H}^{s_{\mid pre(i-1)}}_{W_{i-1}}}.
\]
Case 2: $W_i \subset W^c_{i-1}$.  \\
Using $pre(i)=W^c_{i-1}=pre(i-1) \backslash W_{i-1}$, ``(c)" and induction hypothesis ``(i)" 
we obtain
\[
\mathcal{H}^{s_{\mid pre(i)}}
=\mathcal{H}^{s_{\mid (pre(i-1) \backslash W_{i-1})}}
=\mathcal{H}^{s_{\mid pre(i-1)}}_{W_{i-1}}
\]
\[
=\mathcal{H}^s(W_1, \ldots, W_{i-2})_{W_{i-1}}
=\mathcal{H}^s(W_1, \ldots, W_{i-1}).
\]
Using $pre(i)=pre(i-1) \backslash W_{i-1}$, ``(d)" and induction hypothesis ``(i)", $s_{\mid pre(i)}$ 
is a disparate selection of 
\begin{align*}
&F(W_1, \ldots, W_{i-1})  \\
&=F(W_1, \ldots, W_{i-2})
_{W_{i-1},G({F(W_1, \ldots, W_{i-2})^{\mathcal{H}^s(W_1, \ldots, W_{i-2})_{W_{i-1}}}})}  \\
&=F(W_1, \ldots, W_{i-2})
_{W_{i-1},G({F(W_1, \ldots, W_{i-2})^{\mathcal{H}^{s_{\mid pre(i-1)}}_{W_{i-1}}}})}.
\end{align*}
\end{proof}

\begin{lemma} \label{L:86}
Let $s$ be a disparate selection of $F$ and let $(W_1, \ldots, W_k)$, $k \in \mathbb{N}$, be a cascade 
in $W_0=X$. The collection $\mathcal{H}^s(W_1, \ldots, W_{i-1})$ of $F(W_1, \ldots, W_{i-1})$ 
is $W_i$-distributed for $i=1, \ldots, k$.
\end{lemma}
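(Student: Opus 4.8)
The plan is to obtain this statement as an immediate consequence of Lemma~\ref{L:84} and Lemma~\ref{L:distributed2}; no new induction is required, since the inductive content has already been packaged into Lemma~\ref{L:84}.

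Fix $i$ with $1 \le i \le k$ and let $(W^c_1, \ldots, W^c_k)$ be the complementary tuple of the cascade $(W_1, \ldots, W_k)$. Since $\mathcal{H}^s$ is a collection of $F$, Lemma~\ref{L:cascade1}~(ii) shows that $F(W_1, \ldots, W_{i-1})$ is a set-valued mapping defined on $pre(i)$, and by Lemma~\ref{L:pre} together with Definition~\ref{D:cascade} we have $W_i \subset pre(i) = W_i \cup W^c_i$, with $W_i$ nonempty. Moreover, Lemma~\ref{L:84}~(ii) states that $s_{\mid pre(i)}$ is a disparate selection of $F(W_1, \ldots, W_{i-1})$. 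I would then apply Lemma~\ref{L:distributed2} with $F$ replaced by $F(W_1, \ldots, W_{i-1})$ (on domain $pre(i)$), $s$ replaced by $s_{\mid pre(i)}$, and $W$ replaced by $W_i$: its two hypotheses are exactly the facts just recorded, so its conclusion yields that $\mathcal{H}^{s_{\mid pre(i)}}$ is $W_i$-distributed. Finally, Lemma~\ref{L:84}~(i) identifies $\mathcal{H}^{s_{\mid pre(i)}} = \mathcal{H}^s(W_1, \ldots, W_{i-1})$, whence $\mathcal{H}^s(W_1, \ldots, W_{i-1})$ is $W_i$-distributed. Since $i \in \{1, \ldots, k\}$ was arbitrary, this is the claim.

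The step that deserves the most attention is checking that Lemma~\ref{L:distributed2} is genuinely applicable after the substitution, i.e., that $W_i$ is a nonempty subset of the domain $pre(i)$ of $F(W_1, \ldots, W_{i-1})$ and that $s_{\mid pre(i)}$ is indeed a disparate selection of that mapping; all of this is supplied by Lemmas~\ref{L:cascade1}, \ref{L:pre}, and~\ref{L:84}. There is no serious obstacle. The lemma merely records that the $W$-distributedness enjoyed by the collection induced by a disparate selection (Lemma~\ref{L:distributed2}) propagates step by step along a cascade, which is transparent once one knows from Lemma~\ref{L:84} that restricting $s$ along the cascade produces disparate selections of the successive submappings and complement mappings, with induced collections equal to the derived collections $\mathcal{H}^s(W_1, \ldots, W_{i-1})$.
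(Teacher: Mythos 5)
Your proposal is correct and follows exactly the paper's own argument: apply Lemma~\ref{L:84}~(ii) to see that $s_{\mid pre(i)}$ is a disparate selection of $F(W_1, \ldots, W_{i-1})$, invoke Lemma~\ref{L:distributed2} with the substitutions $pre(i) \rightarrow X$, $W_i \rightarrow W$, $s_{\mid pre(i)} \rightarrow s$, $F(W_1, \ldots, W_{i-1}) \rightarrow F$, and then identify the resulting collection via Lemma~\ref{L:84}~(i). The extra care you take in verifying the domain and nonemptiness hypotheses via Lemmas~\ref{L:cascade1} and~\ref{L:pre} is a harmless elaboration of the same proof.
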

\begin{proof}
Let $1 \le i \le k$. By Lemma \ref{L:84} ``(ii)", $s_{\mid pre(i)}$ is a disparate selection of 
$F(W_1, \ldots, W_{i-1})$. We apply Lemma \ref{L:distributed2} where $pre(i) \rightarrow X$,
$W_i \rightarrow W$, $s_{\mid pre(i)} \rightarrow s$ and $F(W_1, \ldots, W_{i-1}) \rightarrow F$
and obtain $\mathcal{H}^{s_{\mid pre(i)}}$ is $W_i$-distributed. The statement follows from 
Lemma \ref{L:84} ``(i)".
\end{proof}

\begin{lemma} \label{L:87}
Let $s$ be a disparate selection of $F$.  $\mathcal{H}^s$ is a distributed Hall collection 
of $F$.
\end{lemma}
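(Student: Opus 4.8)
The plan is to verify the two defining properties of a distributed Hall collection for $\mathcal{H}^s$: that it is a Hall collection of $F$, and that the distributedness condition of Definition \ref{D:distributed2} holds along every primitive $(F,\mathcal{H}^s)$-critical cascade. The first property is immediate from Lemma \ref{L:81}, which already establishes that $\mathcal{H}^s$ is a Hall collection of $F$ whenever $s$ is a disparate selection. So the bulk of the argument is the distributedness condition.

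For the distributedness condition, let $(W_1, \ldots, W_k)$, $k \in \mathbb{N}$, be an arbitrary primitive $(F,\mathcal{H}^s)$-critical cascade in $W_0 = X$. We must show that $\mathcal{H}^s(W_1, \ldots, W_{i-1})$ is $W_i$-distributed (as a collection of $F(W_1, \ldots, W_{i-1})$) for $i = 1, \ldots, k$. But this is precisely the statement of Lemma \ref{L:86}, which holds for \emph{any} cascade in $X$ — and in particular for the primitive critical cascade at hand. So the proof reduces to invoking Lemma \ref{L:86} for this cascade and then appealing to Definition \ref{D:distributed2}.

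Concretely, I would write: By Lemma \ref{L:81}, $\mathcal{H}^s$ is a Hall collection of $F$. Let $(W_1, \ldots, W_k)$, $k \in \mathbb{N}$, be a primitive $(F, \mathcal{H}^s)$-critical cascade in $W_0 = X$. By Lemma \ref{L:86}, the collection $\mathcal{H}^s(W_1, \ldots, W_{i-1})$ of $F(W_1, \ldots, W_{i-1})$ is $W_i$-distributed for $i = 1, \ldots, k$. By Definition \ref{D:distributed2}, $\mathcal{H}^s$ is a distributed Hall collection of $F$. Note that the chain of reductions behind Lemma \ref{L:86} — via Lemma \ref{L:84} (which tracks how $s$ restricts down a cascade through the complement-mapping construction), Lemma \ref{L:83}, and ultimately Lemma \ref{L:distributed2} applied to $s_{\mid pre(i)}$ — does all the real work, so nothing further is needed here.

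The main obstacle has effectively been front-loaded into the earlier lemmas, so this proof is essentially a bookkeeping step. The only subtlety worth a moment's care is making sure the hypotheses of Lemma \ref{L:86} are met: it requires $s$ to be a disparate selection of $F$ (given) and $(W_1, \ldots, W_k)$ to be a cascade in $W_0 = X$ (a primitive $(F,\mathcal{H}^s)$-critical cascade is in particular such a cascade, by Definitions \ref{D:critical2} and \ref{D:primitive1}). There is no circularity — Lemma \ref{L:86} does not presuppose that $W_i$ is critical or minimal, as explicitly remarked after Lemma \ref{L:distributed2} — so the argument goes through cleanly for the critical cascades that Definition \ref{D:distributed2} quantifies over.
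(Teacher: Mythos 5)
Your proof is correct and follows essentially the same route as the paper: the paper's own argument is exactly to take an arbitrary primitive $(F,\mathcal{H}^s)$-critical cascade and apply Lemma \ref{L:86} to it, with Lemma \ref{L:81} supplying the Hall-collection property. Your version is if anything slightly more explicit in citing Lemma \ref{L:81} and checking the hypotheses of Lemma \ref{L:86}.
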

\begin{proof}
Let $(W_1, \ldots, W_k)$, $k \in \mathbb{N}$, be a primitive $(F,\mathcal{H}^s)$-critical cascade 
in $W_0=X$. Using Lemma \ref{L:86}, the collection  $\mathcal{H}^s(W_1, \ldots, W_{i-1})$ 
of $F(W_1, \ldots, W_{i-1})$ is $W_i$-distributed for $i=1, \ldots, k$, i.e., $\mathcal{H}^s$ is a 
distributed Hall collection 
of $F$.
\end{proof}

                                        %
                                        %
\section{The Disparate Kernel} \label{S:kernel}
We introduce a submapping of $F$ called the disparate kernel. This submapping consists of all 
disparate selections of $F$. We define the submapping $F^*$ of $F$ by
\begin{align*}                                                   
F^*(x) = \{ y \in F(x) \mid 
& \mbox{ there exists a disparate selection }  \\
& \mbox{ } s \mbox{ of } F \mbox{ such that } y = s(x) \} 
\end{align*}
for each $x \in X$. We call $F^*$ the disparate kernel of $F$. 

A set-valued mapping $F$ admits a disparate selection if and only if $F^*(x) \ne \emptyset$ for all 
(or one) $x \in X$, i.e., the graph $G(F^\star)$ of $F^\star$ is nonempty if and only if $F$ admits a 
disparate selection. In the case when the graph $(X,E)$ is complete, $F^*$ had been defined in 
\cite[Section 4]{Fis1} and had been called the alldifferent kernel. 

\begin{example} \label{E:kernel.1}
$F$ admits a disparate selection in Example b) of Fig. \ref{Fig2}, described in Example \ref{E:42} 
and this is the only disparate selection of $F$. The disparate kernel is given by 
$G(F^*)=\{(1,3), (2,2), (3,1), (4,1)\}=G(F) \backslash \{(1,2), (2,1), (3,2), (4,3)\}$ and $F^*$ is 
single-valued. 
\end{example}

We state elementary properties of the disparate kernel.

\begin{lemma} \label{L:kernel.1}
\begin{enumerate}[(i)] 
\item $(F^*)^* = F^*$.
\item Let $Q$ be a submapping of $F$. $Q^*$ is a submapping of $F^*$.
\item Let $Q$ be a submapping of $F$ and let $F^*$ be a submapping of $Q$. $Q^*=F^*$.
\end{enumerate}
\end{lemma}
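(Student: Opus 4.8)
The three statements are all routine consequences of the defining property of the disparate kernel, namely that $y \in F^*(x)$ iff $y$ lies on the graph of some disparate selection of $F$. The only mildly substantive point in each part is the observation that a disparate selection of a submapping is automatically a disparate selection of the ambient mapping (since the graph condition "being a disparate set" is inherited by subsets, and a selection of a submapping is a selection of the original), and conversely a disparate selection $s$ of $F$ whose graph happens to lie inside $G(Q)$ is a disparate selection of $Q$. I would isolate this as the working observation and then dispatch all three items.

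For (i): I would first show $G((F^*)^*) \subset G(F^*)$, which holds because $F^*$ is a submapping of $F$, so any disparate selection of $F^*$ is a disparate selection of $F$, giving $(F^*)^*(x) \subset F^*(x)$ via the definitions. For the reverse inclusion $F^*(x) \subset (F^*)^*(x)$: take $y \in F^*(x)$; by definition there is a disparate selection $s$ of $F$ with $s(x) = y$. The point is that $s$ is actually a selection of $F^*$, because for every $z \in X$ the value $s(z)$ lies on the graph of the disparate selection $s$ of $F$, hence $s(z) \in F^*(z)$. Thus $s$ is a disparate selection of $F^*$ with $s(x) = y$, so $y \in (F^*)^*(x)$.

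For (ii): let $y \in Q^*(x)$. Then there is a disparate selection $s$ of $Q$ with $s(x) = y$. Since $G(Q) \subset G(F)$, $s$ is also a selection of $F$, and it remains disparate (a subset-invariant property), so $s$ witnesses $y \in F^*(x)$; hence $Q^*$ is a submapping of $F^*$. For (iii): by (ii), $Q^*$ is a submapping of $F^*$, so it suffices to prove $F^*$ is a submapping of $Q^*$. Take $y \in F^*(x)$ with witnessing disparate selection $s$ of $F$, $s(x) = y$. As in (i), $s$ is in fact a selection of $F^*$; since $F^*$ is assumed to be a submapping of $Q$, $s$ is a selection of $Q$, still disparate, so $y \in Q^*(x)$. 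Combining the two inclusions gives $Q^* = F^*$.

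The main obstacle — really the only thing to get right — is the repeated use of the fact that a disparate selection $s$ of $F$ is automatically a selection of $F^*$ (not just of $F$); once that is stated cleanly it is used verbatim in all three parts. Everything else is unwinding Definition of $F^*$ and the elementary facts about submappings and disparate sets recorded earlier.
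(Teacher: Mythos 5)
Your proposal is correct and follows essentially the same route as the paper: the paper's own proof rests on exactly the two observations you isolate, namely that a disparate selection of a submapping is a disparate selection of the ambient mapping (giving (ii) and the easy inclusion in (i)), and that a disparate selection of $F$ is automatically a selection of $F^*$ (giving the other inclusion in (i)). For (iii) the paper chains $G(F^*)=G((F^*)^*)\subset G(Q^*)\subset G(F^*)$ using (i) and (ii), whereas you argue the inclusion $F^*\subset Q^*$ directly; these are the same argument in different packaging.
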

\begin{proof} 
Statement ``(i)" is true, since each disparate selection of $F$ is also a disparate selection of $F^*$.
Statement ``(ii)" is true, since each disparate selection of $Q$ is also a disparate selection of $F$.
``(iii)" Using the statements ``(i)" and ``(ii)" we obtain 
$G(F^*)=G((F^*)^*) \subset G(Q^*) \subset G(F^*)$,
i.e., equality holds.
\end{proof} 

\begin{definition} \label{D:DisparateMappings}
$F$ is called disparate if $G(F)$ is nonempty and $F^* = F$.
\end{definition}

If $F$ admits a disparate selection, $F^*$ is a disparate mapping and $F^*(x) \ne \emptyset$ for each 
$x \in X$. Each disparate mapping admits a disparate selection.

\begin{lemma} \label{L:kernel.2}
Let $F$ admit a disparate selection. The mapping $F^*$ is the largest disparate submapping of $F$.
\end{lemma}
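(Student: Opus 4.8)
The claim has two halves: first, $F^*$ is a disparate submapping of $F$; second, it is the largest such, i.e. any disparate submapping $Q$ of $F$ satisfies $G(Q) \subset G(F^*)$. The plan is to obtain the first half directly from the remarks already made in the text and from Lemma \ref{L:kernel.1}(i), and the second half from Lemma \ref{L:kernel.1}(ii).

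For the first half, $F$ admits a disparate selection by hypothesis, so some $s$ is a disparate selection of $F$; then $s(x) \in F^*(x)$ for every $x$, whence $G(F^*)$ is nonempty. By Lemma \ref{L:kernel.1}(i) we have $(F^*)^* = F^*$, so $F^*$ is disparate in the sense of Definition \ref{D:DisparateMappings}, and by construction $G(F^*) \subset G(F)$, so $F^*$ is a disparate submapping of $F$.

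For the second half, let $Q$ be any disparate submapping of $F$. By Definition \ref{D:DisparateMappings}, $Q^* = Q$. Since $Q$ is a submapping of $F$, Lemma \ref{L:kernel.1}(ii) gives that $Q^*$ is a submapping of $F^*$, i.e. $G(Q^*) \subset G(F^*)$; combining, $G(Q) = G(Q^*) \subset G(F^*)$. Thus $F^*$ contains every disparate submapping of $F$, and being itself a disparate submapping, it is the largest one.

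I do not expect any genuine obstacle here: the lemma is essentially a repackaging of parts (i) and (ii) of Lemma \ref{L:kernel.1} together with the definition of a disparate mapping and the standing hypothesis that $F$ admits a disparate selection (which is what is needed to guarantee $G(F^*) \neq \emptyset$, so that $F^*$ qualifies as a \emph{disparate} mapping rather than merely satisfying $F^* = F$ vacuously). The only point worth stating carefully is that "largest" is meant with respect to inclusion of graphs, so that the conclusion is the conjunction of "$F^*$ is a disparate submapping of $F$" and "$G(Q) \subset G(F^*)$ for every disparate submapping $Q$ of $F$."
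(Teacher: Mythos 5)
Your proof is correct and follows essentially the same route as the paper: Lemma \ref{L:kernel.1}(i) gives that $F^*$ is disparate, and the maximality of $F^*$ comes from the fact that a disparate submapping $Q$ satisfies $Q=Q^*\subset F^*$ (the paper unwinds this inclusion by hand via a disparate selection of $Q$ through each $(x,y)\in G(Q)$, whereas you cite Lemma \ref{L:kernel.1}(ii), but the content is identical).
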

\begin{proof} 
Using Lemma \ref{L:kernel.1} ``(i)", $F^*$ is a disparate mapping and by definition $F^*$ is a 
submapping of $F$. Let $Q$ be a disparate submapping of $F$, let $x \in X$ and $y \in Q(x)$. There 
exists a disparate selection $s$ of $Q$ such that $s(x) = y$. Then $s$ is also a selection of $F$, i.e., 
$y \in F^* (x)$ and $Q$ is a submapping of $F^*$.
\end{proof} 

\begin{definition} \label{D:eliminationPoint}
Let $W \subset X$ be a nonempty set. The elimination points of $W$ are defined by
\begin{align*}                                                   
elim_F(W) = \{ (x,y) \in G(F_{\mid (X \backslash W)}) \mid 
& \mbox{ $(F_{\{x\},\{(x,y)\}})_{\mid W}$ does not satisfy}  \\
& \mbox{ the generalized Hall condition } \}.
\end{align*}
\end{definition}

The basic idea of elimination points is that there is never a disparate selection which goes through
an elimination point $(x,y)$ (compare Lemma \ref{L:complement1}). We can eliminate this point  
from $G(F)$ and the remaining mapping admits the same disparate selections. The relation of the 
disparate kernel of $F$ and elimination points of $F$ is described in subsequent statements.

\begin{lemma} \label{L:kernel.3}
Let $Q$ be a submapping of $F$, let $F^*$ be a submapping of $Q$ and let $W \subset X$ be a 
nonempty set. $G(F^*) \cap elim_Q(W) = \emptyset$.
\end{lemma}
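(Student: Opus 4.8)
The plan is to argue by contradiction: suppose there is a point $(x,y) \in G(F^*) \cap elim_Q(W)$ for some nonempty $W \subset X$. Since $(x,y) \in G(F^*)$, by the definition of the disparate kernel there exists a disparate selection $s$ of $F$ with $s(x) = y$. The aim is to derive from this selection a disparate selection of the mapping $(Q_{\{x\},\{(x,y)\}})_{\mid W}$, which would contradict the fact that $(x,y) \in elim_Q(W)$, i.e. that $(Q_{\{x\},\{(x,y)\}})_{\mid W}$ does \emph{not} satisfy the generalized Hall condition — here I invoke Theorem \ref{T:main} to replace "satisfies the generalized Hall condition" with "admits a disparate selection".

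First I would observe that, since $F^*$ is a submapping of $Q$ which is a submapship of $F$, the selection $s$ is in fact a disparate selection of $F^*$ (Lemma \ref{L:kernel.1}(i): every disparate selection of $F$ is one of $F^*$), hence of $Q$, hence of $F$ — so all three mappings share $s$, and in particular $s$ is a disparate selection of $Q$ with $s(x) = y$. Next, applying Lemma \ref{L:complement1} to the mapping $Q$ (with the same $x$, $y$), the restriction $s_{\mid (X \backslash \{x\})}$ is a disparate selection of $Q_{\{x\},\{(x,y)\}}$. Then I would further restrict to $W$: note $W \subset X \backslash \{x\}$ because $(x,y) \in G(Q_{\mid (X \backslash W)})$ forces $x \notin W$. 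The restriction of a disparate selection to a subset of its domain is again a disparate selection (its graph is a subset of a disparate set, hence disparate), and it is a selection of the correspondingly restricted mapping. Thus $s_{\mid W}$ is a disparate selection of $(Q_{\{x\},\{(x,y)\}})_{\mid W}$.

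This gives the contradiction: $(Q_{\{x\},\{(x,y)\}})_{\mid W}$ admits a disparate selection, so by Theorem \ref{T:main} it satisfies the generalized Hall condition, contradicting $(x,y) \in elim_Q(W)$. Hence $G(F^*) \cap elim_Q(W) = \emptyset$.

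The main obstacle I anticipate is purely bookkeeping rather than conceptual: one must be careful that Lemma \ref{L:complement1} is being applied to $Q$ (not $F$), which is legitimate since $Q$ is itself a set-valued mapping on $(X,E)$ into $Y$ and the lemma is stated for an arbitrary such mapping; and one must verify that restriction commutes appropriately, i.e. that $\bigl((Q_{\{x\},\{(x,y)\}})\bigr)_{\mid W}$ is the relevant object and that $s_{\mid W}$ indeed selects into it — this follows from $G(s_{\mid W}) \subset G(s_{\mid (X\backslash\{x\})}) \cap (W \times Y) \subset G(Q_{\{x\},\{(x,y)\}}) \cap (W\times Y) = G((Q_{\{x\},\{(x,y)\}})_{\mid W})$. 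The only other subtlety is ensuring $x \notin W$, which is immediate from the definition of $elim_Q(W)$ since its points lie in $G(Q_{\mid (X\backslash W)})$.
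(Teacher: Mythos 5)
Your proposal is correct and follows essentially the same route as the paper: obtain a disparate selection $s$ of $Q$ with $s(x)=y$ from $(x,y)\in G(F^*)$ (the paper does this via Lemma \ref{L:kernel.1}~(iii), you do it directly from the sandwiching $G(F^*)\subset G(Q)\subset G(F)$), pass to $Q_{\{x\},\{(x,y)\}}$ by Lemma \ref{L:complement1}, restrict to $W$, and invoke Theorem \ref{T:main} to contradict $(x,y)\in elim_Q(W)$. You in fact make explicit a restriction step that the paper leaves implicit.
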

\begin{proof} 
Let $(x,y) \in G(F^*) \cap ((X \backslash W) \times Y)$. Using Lemma \ref{L:kernel.1} ``(iii)", 
$(x,y) \in G(Q^*)$, i.e., there exists a disparate selection $s$ of $Q$ such that $s(x)=y$. By 
Lemma \ref{L:complement1}, $Q_{\{x\},\{(x,y)\}}$ admits a disparate selection and using 
Theorem \ref{T:main}, $(Q_{\{x\},\{(x,y)\}})_{\mid W}$ satisfies the generalized Hall condition. 
By definition of elimination points $(x,y) \notin elim_Q(W)$.
\end{proof} 

In particular, Lemma \ref{L:kernel.3} shows $elim_{F^*}(W) = \emptyset$ for each nonempty set 
$W \subset X$, i.e., $G(F^*)$ does not contain elimination points.

$F$ can be decomposed into the disparate kernel $F^\star$ of $F$ and elimination points.

\begin{theorem} \label{T:kernel.1}
$G(F^*) \cap elim_F(W) = \emptyset$ for each nonempty set $W \subset X$ and
\[
G(F)=G(F^*) \cup \bigcup_{W \subset X, W \ne \emptyset} elim_F(W).
\]
\end{theorem}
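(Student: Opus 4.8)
The plan is to treat the two assertions separately. The disjointness $G(F^*) \cap elim_F(W) = \emptyset$ is nothing but the special case $Q = F$ of Lemma \ref{L:kernel.3}: the mapping $F$ is a submapping of itself and $F^*$ is a submapping of $F$ by construction, so Lemma \ref{L:kernel.3} applies verbatim and gives the claim for every nonempty $W \subset X$.

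For the set equality I would establish the two inclusions. The inclusion ``$\supseteq$'' is immediate from the definitions: $G(F^*) \subset G(F)$ since $F^*$ is a submapping of $F$, and $elim_F(W) \subset G(F_{\mid (X \backslash W)}) \subset G(F)$ for each nonempty $W \subset X$ by Definition \ref{D:eliminationPoint}.

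The substance lies in ``$\subseteq$''. Fix $(x,y) \in G(F)$ and suppose $(x,y) \notin G(F^*)$; I will show $(x,y) \in elim_F(W)$ for $W = X \backslash \{x\}$. First note that $\sharp X > 1$ in this situation: if $\sharp X = 1$ then $X = \{x\}$, and the selection of $F$ sending $x$ to $y$ is trivially disparate, so $(x,y) \in G(F^*)$, a contradiction. Hence $W = X \backslash \{x\}$ is nonempty, $X \backslash W = \{x\}$, and $(x,y) \in G(F_{\mid \{x\}}) = G(F_{\mid (X \backslash W)})$ because $y \in F(x)$. Next, $(x,y) \notin G(F^*)$ means, by the definition of the disparate kernel, that $F$ has no disparate selection $s$ with $s(x) = y$; by Lemma \ref{L:complement1} the mapping $F_{\{x\},\{(x,y)\}}$ then has no disparate selection, and by Theorem \ref{T:main} it fails the generalized Hall condition. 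Since $F_{\{x\},\{(x,y)\}}$ is already defined on $X \backslash \{x\} = W$, we have $(F_{\{x\},\{(x,y)\}})_{\mid W} = F_{\{x\},\{(x,y)\}}$, so this restriction fails the generalized Hall condition as well; by Definition \ref{D:eliminationPoint} this is exactly the statement $(x,y) \in elim_F(W)$, and the proof is complete.

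I do not anticipate a real obstacle. The only points needing attention are the bookkeeping about the domain of $F_{\{x\},\{(x,y)\}}$ (so that restricting it to $X \backslash \{x\}$ changes nothing, and the degenerate case $\sharp X = 1$ is peeled off first to keep $W$ nonempty), and the recognition that the passage from ``admits no disparate selection'' to ``fails the generalized Hall condition'' is precisely Theorem \ref{T:main}, which is legitimately available since this section follows Sections \ref{S:sufficiency} and \ref{S:necessity}.
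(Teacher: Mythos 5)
Your proposal is correct and follows essentially the same route as the paper: disjointness via Lemma \ref{L:kernel.3} with $Q=F$, and for the remaining inclusion the choice $W = X \backslash \{x\}$ combined with Lemma \ref{L:complement1} and Theorem \ref{T:main}. The extra bookkeeping you supply (the $\sharp X = 1$ case and the identification $(F_{\{x\},\{(x,y)\}})_{\mid W} = F_{\{x\},\{(x,y)\}}$) is exactly what the paper's ``w.l.o.g.'' compresses.
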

\begin{proof}
W.l.o.g. we may assume $\sharp X > 1$. Using Lemma \ref{L:kernel.3}, it is enough to show 
\[
G(F) \backslash G(F^*) \subset \bigcup_{W \subset X, W \ne \emptyset} elim_F(W).
\]
Let $(x,y) \in G(F) \backslash G(F^*)$. There does not exist a disparate selection $s$ of $F$ such that
$s(x)=y$. Using Lemma \ref{L:complement1}, $F_{\{x\},\{(x,y)\}}$ does not admit a disparate 
selection. Define $W=X \backslash \{x\} \ne \emptyset$ and using Theorem \ref{T:main},
$(x,y) \in elim_F(W)$.
\end{proof}

\begin{theorem} \label{T:kernel.2}
The following statements are equivalent:
\begin{enumerate}[(i)] 
\item $F$ is disparate. 
\item $G(F) \ne \emptyset$ and $elim_F(W) = \emptyset$ for each nonempty set $W \subset X$. 
\end{enumerate}
\end{theorem}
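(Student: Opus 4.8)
The plan is to prove Theorem \ref{T:kernel.2} by combining Theorem \ref{T:kernel.1} with the definition of a disparate mapping (Definition \ref{D:DisparateMappings}) and the kernel-fixed-point fact from Lemma \ref{L:kernel.1}. The statement asserts the equivalence of ``$F$ is disparate'' with ``$G(F) \neq \emptyset$ and $elim_F(W) = \emptyset$ for every nonempty $W \subset X$,'' so the argument naturally splits into two implications.

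\textbf{Proof of ``(i) $\Rightarrow$ (ii)''.} Assume $F$ is disparate, i.e., $G(F) \neq \emptyset$ and $F^* = F$. The first conjunct of (ii) is immediate. For the second, since $G(F) \neq \emptyset$ and $F = F^*$, the mapping $F$ admits a disparate selection (a disparate mapping admits a disparate selection, as noted right after Definition \ref{D:DisparateMappings}); equivalently, $F^*$ has nonempty graph. Now apply Lemma \ref{L:kernel.3} with $Q = F$: since trivially $F$ is a submapping of $F$ and $F^* = F$ is a submapping of $Q = F$, we get $G(F^*) \cap elim_F(W) = \emptyset$ for each nonempty $W \subset X$. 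But $G(F^*) = G(F) \supset elim_F(W)$ (using $elim_F(W) \subset G(F_{\mid(X\backslash W)}) \subset G(F)$ from Definition \ref{D:eliminationPoint}), so $elim_F(W) = G(F^*) \cap elim_F(W) = \emptyset$. This gives (ii). Alternatively and more cleanly, one invokes Theorem \ref{T:kernel.1} directly: the decomposition $G(F) = G(F^*) \cup \bigcup_{W} elim_F(W)$ together with $F^* = F$ and the disjointness clause forces each $elim_F(W)$ to be empty.

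\textbf{Proof of ``(ii) $\Rightarrow$ (i)''.} Assume $G(F) \neq \emptyset$ and $elim_F(W) = \emptyset$ for each nonempty $W \subset X$. By Theorem \ref{T:kernel.1}, $G(F) = G(F^*) \cup \bigcup_{W \subset X,\, W \neq \emptyset} elim_F(W) = G(F^*)$, hence $F^* = F$; combined with $G(F) \neq \emptyset$ this is precisely the definition of $F$ being disparate. (One should also observe that $G(F^*) \neq \emptyset$ follows, so the hypothesis that $F$ admits a disparate selection — implicitly needed to quote Theorem \ref{T:kernel.1}'s second clause — is satisfied; but in fact Theorem \ref{T:kernel.1} as stated carries no such hypothesis, so nothing extra is required.)

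\textbf{Main obstacle.} I expect no genuine difficulty: the theorem is essentially a repackaging of Theorem \ref{T:kernel.1}. The only point requiring a moment's care is the edge case $\sharp X = 1$, where ``the generalized Hall condition'' is governed by Definition \ref{D:Hall1} rather than Definition \ref{D:Hall2}, and where the only nonempty subset $W$ is $X$ itself; here $elim_F(X)$ is vacuously empty (there is no $x \in X \backslash X$), and $F$ disparate simply means $G(F) \neq \emptyset$, so both sides of the equivalence reduce to $G(F) \neq \emptyset$ and the claim holds trivially. For $\sharp X > 1$ the argument above applies verbatim.
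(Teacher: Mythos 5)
Your proof is correct and follows the same route as the paper, whose entire argument is ``This statement follows from Theorem \ref{T:kernel.1}''; you simply spell out the two implications (using the disjointness clause plus $elim_F(W) \subset G(F)$ for one direction and the decomposition for the other), which is exactly what that one-line citation leaves implicit.
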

\begin{proof}
This statement follows from Theorem \ref{T:kernel.1}.
\end{proof}

The mapping $F^*$ can also be used for a unicity statement of disparate selections.

\begin{theorem} \label{T:kernel.3}
The following statements are equivalent:
\begin{enumerate}[(i)] 
\item $F$ admits a unique disparate selection.  
\item $F^*(x)$ is a singleton for each $x \in X$.  
\end{enumerate}
\end{theorem}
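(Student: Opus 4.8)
The statement follows almost directly from the defining property of the disparate kernel, namely that for each $x\in X$ the set $F^*(x)$ consists exactly of those $y\in F(x)$ through which some disparate selection of $F$ passes. The plan is to prove the two implications separately, each by a short direct argument, using this characterization together with the remark (stated just after the definition of $F^*$) that $G(F^*)\neq\emptyset$ if and only if $F$ admits a disparate selection.

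First I would treat ``(i) $\Rightarrow$ (ii)''. Assume $F$ admits a unique disparate selection $s$. For each $x\in X$ we have $s(x)\in F^*(x)$, so $F^*(x)\neq\emptyset$. Conversely, if $y\in F^*(x)$, then by definition of $F^*$ there is a disparate selection $s'$ of $F$ with $s'(x)=y$; uniqueness forces $s'=s$, hence $y=s(x)$. Therefore $F^*(x)=\{s(x)\}$ is a singleton for every $x\in X$.

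Next I would treat ``(ii) $\Rightarrow$ (i)''. Assume $F^*(x)$ is a singleton for each $x\in X$. In particular $G(F^*)\neq\emptyset$, so by the cited remark $F$ admits a disparate selection. Now let $s'$ be any disparate selection of $F$; by the definition of the disparate kernel, $s'(x)\in F^*(x)$ for every $x\in X$, and since $F^*(x)$ has exactly one element, $s'(x)$ must equal that element. Thus $s'$ is completely determined by $F^*$, so any two disparate selections of $F$ coincide, and the disparate selection is unique.

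\textbf{Main obstacle.} There is no substantial difficulty here; the argument is routine once the characterization of $F^*$ is in hand. The only point requiring a little care is the existence half of ``(ii) $\Rightarrow$ (i)'': one must not merely deduce that there is \emph{at most} one disparate selection but also that there is \emph{at least} one, which is exactly where the remark $G(F^*)\neq\emptyset\iff F$ admits a disparate selection is invoked.
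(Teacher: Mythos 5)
Your argument is correct and is exactly the elaboration the paper intends when it says the equivalence "follows from the definition of $F^*$": both directions come straight from the characterization of $F^*(x)$ as the set of values through which some disparate selection passes, together with the remark that $G(F^*)\neq\emptyset$ iff $F$ admits a disparate selection. No difference in approach, only in the level of detail.
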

\begin{proof}
The equivalence of ``(i)" and ``(ii)" follows from the definition of $F^*$. 
\end{proof}
         
Theorem \ref{T:kernel.3} can be applied to Sudoku problems which are always uniquely solvable.

                                        %
                                        %
\section{Calculation of the disparate Kernel} \label{S:Calculation1}
We describe a method for the calculation of the disparate kernel $F^*$ of $F$. This method 
terminates after finitely many steps with a description of $F^*$ or it indicates that $F$ does not 
admit a disparate selection. In each cycle of the method the mapping $F$ is reduced by elimination 
points until there do not exist any more points to eliminate.

\begin{definition} \label{D:eliminationSet}
A nonempty set $W \subset X$ is called an elimination set of $F$ if $elim_F(W) \ne \emptyset$.
\end{definition}

Obviously, each elimination set is also a generalized critical set. Simple examples show the converse 
is not true. In contrast to the definition of generalized critical sets (Definition \ref{D:critical}) we require 
$(x,y)$ to be in $G(F_{\mid (X \backslash W)})$ and not only in $(X \backslash W) \times Y$. 
The search for elimination sets is the basic idea in the construction of Calculation Method 1.

Description of Calculation Method 1.
         
\begin{enumerate}[\qquad 1.]
\item 
Set $F^1=F$ and $i=1$.  
\medskip
\item
Choose an elimination set $W \subset X$ of $F^i$.  
\medskip
\item
If $F^i$ does not admit an elimination set. STOP.  \\
Otherwise continue.  
\medskip
\item 
Define $F^{i+1}$ by $G(F^{i+1})=G(F^i) \backslash elim_{F^i}(W)$.  
\medskip
\item 
If $F^{i+1}(x)=\emptyset$ for some $x \in X$. STOP. Otherwise continue.  
\medskip
\item 
Increase $i \rightarrow i+1$ and continue with Step 2.  
\medskip
\end{enumerate}          

Calculation Method 1 can be considered as conceptional, since it does not specify how to find an 
elimination set $W$ in Step 2. In \cite[Section 8]{Fis1} a method to determine an elimination set
had been proposed when the graph $(X,E)$ is complete.

The idea of using elimination sets in Calculation Method 1 is also used in strategies solving Sudoku 
puzzles (compare e.g. Crook \cite{Cro} or Provan \cite{Pro}).

\begin{lemma} \label{L:CM1.1}
Calculation Method 1 terminates after finitely many \\ steps.  
\end{lemma}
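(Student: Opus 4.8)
The plan is to track the cardinality $\sharp G(F^i)$ of the graph of the mapping produced in the $i$-th cycle of Calculation Method 1 and to show that it strictly decreases with each completed cycle; finiteness of $G(F) \subset X \times Y$ then forces termination after boundedly many cycles.

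First I would observe that the only way the method could fail to terminate is by returning from Step~6 to Step~2 infinitely often, thereby producing an infinite sequence $F^1, F^2, F^3, \ldots$ In order for $F^{i+1}$ to be defined at all, Step~3 must have been passed, i.e., $F^i$ admits an elimination set, and in Step~2 such an elimination set $W \subset X$ of $F^i$ is chosen. By Definition~\ref{D:eliminationSet} this means $elim_{F^i}(W) \ne \emptyset$.

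Next I would note, using Definition~\ref{D:eliminationPoint}, that $elim_{F^i}(W) \subset G(F^i_{\mid (X \backslash W)}) \subset G(F^i)$. Hence in Step~4 the mapping $F^{i+1}$ defined by $G(F^{i+1}) = G(F^i) \backslash elim_{F^i}(W)$ satisfies $\sharp G(F^{i+1}) < \sharp G(F^i)$, because the removed set is nonempty and is contained in $G(F^i)$.

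Finally, since $G(F^1) = G(F) \subset X \times Y$ is finite, the sequence $\sharp G(F^1) > \sharp G(F^2) > \sharp G(F^3) > \cdots$ is a strictly decreasing sequence of nonnegative integers and therefore has length at most $\sharp G(F) + 1$; consequently the method performs at most $\sharp G(F)$ cycles before stopping at Step~3 or Step~5. There is no genuine obstacle here: the only point requiring care is to verify that the set eliminated in each cycle is simultaneously nonempty (from the definition of elimination set) and a subset of the current graph (from the definition of elimination points), since it is precisely the combination of these two facts that yields the strict decrease.
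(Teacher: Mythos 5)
Your proof is correct and follows the same route as the paper: both arguments observe that $elim_{F^i}(W)$ is nonempty (by the definition of an elimination set) and contained in $G(F^i)$ (by the definition of elimination points), so that $\sharp G(F^{i+1}) < \sharp G(F^i)$, and finiteness of $X \times Y$ forces termination. Your version merely spells out the bound on the number of cycles more explicitly.
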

\begin{proof} 
Assume the calculation method does not stop at Step 3. Then $elim_{F^i}(W)$ is nonempty and is
contained in $G(F^i)$ for each $i$. Consequently, $\sharp G(F^{i+1})<\sharp G(F^i)$ and the 
method will stop at Step 5, since $X$ and $Y$ are finite sets.
\end{proof}

Based on Lemma \ref{L:CM1.1} we define $i^{max}_1$ to be the largest index which occurs 
in Calculation Method 1. If Calculation Method 1 stops at Step 3, $i \rightarrow i^{max}_1$.
If Calculation Method 1 stops at Step 5, $i+1 \rightarrow i^{max}_1$.

\begin{lemma} \label{L:CM1.2}
$G(F^*) \subset G(F^i)$ for each $i=1, \ldots, i^{max}_1$.
\end{lemma}
\begin{proof} 
We prove the statement by induction on $i=1, \ldots, i^{max}_1$. The statement is true for $i=1$. 
Let $2 \le i \le i^{max}_1$. By definition of Calculation Method 1 there exists an elimination set
$W \subset X$ of $F^{i-1}$. By induction hypothesis $G(F^*) \subset G(F^{i-1})$. Using Lemma 
\ref{L:kernel.3} with $Q=F^{i-1}$ yields $G(F^*) \cap elim_{F^{i-1}}(W) = \emptyset$ and we obtain
$G(F^*)=G(F^*) \backslash elim_{F^{i-1}}(W) \subset G(F^{i-1}) \backslash elim_{F^{i-1}}(W) = G(F^i)$.
\end{proof}

It is not necessary to know in advance if $F$ admits a disparate selection before starting with
Calculation Method 1. If $F$ admits a disparate selection Calculation Method 1 stops after finitely 
many steps with the disparate kernel $F^*$ of $F$. If $F$ does not admit a disparate selection,
Calculation Method 1 stops at Step 5 and indicates that $F$ does not admit a disparate selection.

\begin{theorem} \label{T:CM1.1}
Let $G(F) \ne \emptyset$. The following statements are equivalent: 
\begin{enumerate}[(i)]
\item $F$ admits a disparate selection.
\item Calculation Method 1 does not stop at Step 5.  
\item Calculation Method 1 stops at Step 3.
\item $F^{i^{max}_1}$ is disparate.
\item $G(F^{i^{max}_1}) \ne \emptyset$ and $F^*=F^{i^{max}_1}$.
\end{enumerate}
\end{theorem}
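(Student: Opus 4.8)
The plan is to establish the cycle of implications (i) $\Rightarrow$ (iii) $\Rightarrow$ (iv) $\Rightarrow$ (v) $\Rightarrow$ (ii) $\Rightarrow$ (i), since most of these are short and only one direction carries real content. First I would observe that (ii) and (iii) are logically tied together by the structure of Calculation Method 1: by Lemma \ref{L:CM1.1} the method terminates, and by construction it can only halt at Step 3 or at Step 5. Hence ``does not stop at Step 5'' and ``stops at Step 3'' are literally the same statement, giving (ii) $\Leftrightarrow$ (iii) for free. This reduces the work to connecting (i), (iii)/(ii), (iv) and (v).

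The implication (iii) $\Rightarrow$ (iv): if the method stops at Step 3, then $F^{i^{max}_1}$ admits no elimination set, i.e. $elim_{F^{i^{max}_1}}(W) = \emptyset$ for every nonempty $W \subset X$; moreover $G(F^{i^{max}_1})$ is nonempty because the method did not stop at Step 5. By Theorem \ref{T:kernel.2} this is exactly the statement that $F^{i^{max}_1}$ is disparate. For (iv) $\Rightarrow$ (v): if $F^{i^{max}_1}$ is disparate then by Definition \ref{D:DisparateMappings} its graph is nonempty and $(F^{i^{max}_1})^* = F^{i^{max}_1}$, so it only remains to identify this with $F^*$. Here I would invoke Lemma \ref{L:kernel.1}(iii) with $Q = F^{i^{max}_1}$: by Lemma \ref{L:CM1.2} we have $G(F^*) \subset G(F^{i^{max}_1}) \subset G(F)$, so $F^{i^{max}_1}$ is a submapping of $F$ containing $F^*$ as a submapping, whence $(F^{i^{max}_1})^* = F^*$; combined with (iv) this gives $F^* = F^{i^{max}_1}$, which is (v). The implication (v) $\Rightarrow$ (ii): from $G(F^{i^{max}_1}) \ne \emptyset$ the method did not stop at Step 5 (stopping at Step 5 forces an empty image, hence empty graph). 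Finally (ii) $\Rightarrow$ (i), or more directly (v) $\Rightarrow$ (i): if $F^* = F^{i^{max}_1}$ has nonempty graph, then by the remarks following the definition of the disparate kernel (or by Theorem \ref{T:kernel.1} together with Lemma \ref{L:kernel.2}) a mapping with nonempty disparate kernel admits a disparate selection.

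For the remaining link (i) $\Rightarrow$ (iii) I would argue by contraposition together with Lemma \ref{L:CM1.2}. Suppose the method does not stop at Step 3; then by the termination lemma it stops at Step 5, so $G(F^{i^{max}_1})$ has an empty image and hence $G(F^{i^{max}_1}) = \emptyset$. But Lemma \ref{L:CM1.2} gives $G(F^*) \subset G(F^{i^{max}_1}) = \emptyset$, so $F$ has empty disparate kernel and therefore admits no disparate selection, contradicting (i). This closes the cycle.

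The main obstacle is the step (iv) $\Rightarrow$ (v): everything else is bookkeeping about where the method can halt, but pinning down that the terminal mapping $F^{i^{max}_1}$ is not merely \emph{some} disparate submapping of $F$ but precisely the disparate kernel requires the sandwich $G(F^*) \subset G(F^{i^{max}_1}) \subset G(F)$ from Lemma \ref{L:CM1.2} and the rigidity statement Lemma \ref{L:kernel.1}(iii). One should take care that Lemma \ref{L:CM1.2} is available for the index $i^{max}_1$ in both halting cases (Step 3 and Step 5), which it is by its statement. The assumption $G(F) \ne \emptyset$ is used only to make (i) meaningful and to start the method; the case $\sharp X = 1$ is covered automatically since then elimination sets cannot exist.
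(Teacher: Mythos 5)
Your overall architecture is the same as the paper's: a cycle through the five statements driven by Lemma \ref{L:CM1.1}, Lemma \ref{L:CM1.2}, Theorem \ref{T:kernel.2} and the kernel lemmas, with only a cosmetic difference at (iv)~$\Rightarrow$~(v), where you use Lemma \ref{L:kernel.1}(iii) with the sandwich $G(F^*)\subset G(F^{i^{max}_1})\subset G(F)$ while the paper invokes Lemma \ref{L:kernel.2} for the inclusion $G(F^{i^{max}_1})\subset G(F^*)$; both are fine, and your variant even avoids having to first note that $F$ admits a disparate selection before citing Lemma \ref{L:kernel.2}.

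There is, however, one faulty inference, and you make it twice: ``stopping at Step 5 forces an empty image, hence empty graph.'' Step 5 fires when $F^{i+1}(x)=\emptyset$ for \emph{some} $x\in X$, not for all $x$, so $G(F^{i^{max}_1})$ can perfectly well be nonempty when the method halts there. As written, your contrapositive argument for (i)~$\Rightarrow$~(iii) and your argument for (v)~$\Rightarrow$~(ii) both rest on this false step. The repair is immediate and is exactly what the paper does for (i)~$\Rightarrow$~(ii): argue pointwise. If $F$ admits a disparate selection then $F^*(x)\ne\emptyset$ for \emph{every} $x$, and Lemma \ref{L:CM1.2} gives $F^*(x)\subset F^i(x)$ for all $x$ and $i$, so no image of any $F^i$ is ever empty and Step 5 never triggers. (Equivalently, in your contrapositive: from $F^{i^{max}_1}(x)=\emptyset$ for the offending $x$ you get $F^*(x)=\emptyset$, hence no disparate selection.) Note also that your (v)~$\Rightarrow$~(ii) link is redundant anyway, since you already have (ii)~$\Leftrightarrow$~(iii) for free and close the cycle via (v)~$\Rightarrow$~(i); so after patching (i)~$\Rightarrow$~(iii) as above, the proof is complete.
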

\begin{proof}
``(i) $\Rightarrow$ (ii)" $F^*$ has nonempty image sets and we apply Lemma \ref{L:CM1.2}.  \\
``(ii) $\Rightarrow$ (iii)" Using Lemma \ref{L:CM1.1}, Calculation Method 1 stops at Step 3.  \\
``(iii) $\Rightarrow$ (iv)" Using Theorem \ref{T:kernel.2}, $F^{i^{max}_1}$ is disparate since
$G(F^{i^{max}_1}) \ne \emptyset$ and $F^{i^{max}_1}$ does not admit an elimination set.  \\
``(iv) $\Rightarrow$ (v)" Using Lemma \ref{L:kernel.2}, $G(F^{i^{max}_1}) \subset G(F^*)$  
and using Lemma \ref{L:CM1.2} equality holds.  \\
``(v) $\Rightarrow$ (i)" $G(F^*) \ne \emptyset$, i.e., $F$ admits a disparate selection.
\end{proof}                           
             
If $F$ admits a unique disparate selection, $F^*(x)$ is a singleton for each $x \in X$ and the
result of Calculation Method 1 is a description of this unique disparate selection. If $F$ admits more 
than one disparate selection, we have to apply Calculation Method 2, which determines one of 
several disparate selections.

                                        %
                                        %
\section{Calculation of a Disparate Selection} \label{S:Calculation2}
We describe a method for the calculation of a disparate selection $s$ of $F$. This method makes use of 
Calculation Method 1 and terminates after finitely many steps with a disparate selection. In each cycle of 
the method the mapping $F$ is reduced to its disparate kernel and any point in the disparate kernel is 
selected. 

Description of Calculation Method 2.
         
\begin{enumerate}[\qquad 1.]
\item 
Set $F^1=F$ and $i=1$.
\medskip
\item
Determine the disparate kernel $(F^i)^*$ of $F^i$ with Calculation Method 1.
\medskip
\item
If Calculation Method 1 terminates at Step 5. STOP.  \\
Otherwise continue.
\medskip
\item
Choose $(x_i,y_i) \in G((F^i)^*)$.
\medskip
\item 
If $i=\sharp X$. STOP. Otherwise continue.
\medskip
\item 
Set $F^{i+1}=(F^i)^*_{\{x_i\},\{(x_i,y_i)\}}$, increase $i \rightarrow i+1$ and continue with Step 2.
\medskip
\end{enumerate}                                            

Please note, $F^{i+1}$ is defined on $X \backslash \{x_1, \ldots, x_i\}$ at each step of Calculation 
Method 2. Also note, by using the complement mapping in Step 6, $F^{i+1}$ may contain new 
elimination sets.

\begin{lemma} \label{L:CM2.1}
Calculation Method 2 terminates after finitely many steps.
\end{lemma}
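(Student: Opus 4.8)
The plan is to show termination by exhibiting a quantity that strictly decreases with each completed cycle of Calculation Method 2. The natural candidate is the size of the domain of $F^i$. First I would observe that by the note following the description of the method, $F^{i+1}$ is defined on $X \backslash \{x_1, \ldots, x_i\}$, so the domain of $F^i$ has cardinality $\sharp X - (i-1)$. Hence the index $i$ is bounded: the method cannot reach a value of $i$ exceeding $\sharp X$, because at $i = \sharp X$ it stops at Step 5.

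Next I would check that each individual cycle of Calculation Method 2 itself terminates, so that the outer loop is well-defined. Step 2 invokes Calculation Method 1, which terminates after finitely many steps by Lemma \ref{L:CM1.1}. The remaining steps (3--6) are a fixed finite amount of work: a single check, a single choice of a point, a comparison of integers, and the formation of one complement mapping. Therefore each pass through Steps 2--6 completes in finitely many steps.

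Finally I would assemble these two facts. Either the method stops at Step 3 (when Calculation Method 1 reaches its Step 5) or at Step 5 (when $i = \sharp X$), or it proceeds to Step 6, which increments $i$ and returns to Step 2. Since $i$ is a strictly increasing positive integer bounded above by $\sharp X$, Step 6 can be executed at most $\sharp X - 1$ times, so after at most $\sharp X$ cycles the method halts at Step 3 or Step 5. Combining this with the finiteness of each cycle gives termination after finitely many steps in total.

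I do not expect a serious obstacle here; the argument is essentially bookkeeping. The one point that needs care is confirming that $F^{i+1}$ really is defined on a domain one element smaller than that of $F^i$ — this rests on the fact (stated in the remark preceding the lemma) that $(x_i, y_i) \in G((F^i)^*)$ so that $x_i$ lies in the domain of $F^i$, and on the definition of the complement mapping $(F^i)^*_{\{x_i\},\{(x_i,y_i)\}}$, which by construction is defined on the domain of $(F^i)^*$ minus $\{x_i\}$, and $(F^i)^*$ shares the domain of $F^i$. Once this is granted, the strict decrease of the domain size (equivalently, the strict increase of $i$) is immediate.
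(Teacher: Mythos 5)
Your proposal is correct and follows essentially the same route as the paper: the method either halts at Step 3, or $i$ strictly increases and is bounded by $\sharp X$, forcing a halt at Step 5. Your additional observation that each invocation of Calculation Method 1 in Step 2 itself terminates (by Lemma \ref{L:CM1.1}) is a point the paper's one-line proof leaves implicit, and it is a reasonable bit of extra care rather than a different argument.
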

\begin{proof} 
Assume the calculation method does not stop at Step 3. Then $i$ increases and the method will stop 
at Step 5, since $X$ is a finite set.
\end{proof}

Based on Lemma \ref{L:CM2.1} we define $i^{max}_2$ to be the largest index which occurs 
in Calculation Method 2.

\begin{lemma} \label{L:CM2.2}
Let $F$ admit a disparate selection. $F^i$ admits a disparate selection for $i=1, \ldots, i^{max}_2$.
\end{lemma}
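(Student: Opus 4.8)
The plan is to prove the statement by induction on $i$, exactly paralleling the structure of Calculation Method 2 and leaning on the corresponding facts already established for Calculation Method 1 (Theorem \ref{T:CM1.1} in particular) together with Lemma \ref{L:complement1}. The base case $i=1$ is immediate because $F^1=F$ and we have assumed $F$ admits a disparate selection.

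For the inductive step, suppose $2\le i\le i^{max}_2$ and that $F^{i-1}$ admits a disparate selection. First I would observe that, since $F^{i-1}$ admits a disparate selection, Calculation Method 1 applied to $F^{i-1}$ in Step 2 does not stop at its Step 5 (by Theorem \ref{T:CM1.1}, ``(i) $\Rightarrow$ (ii)''), so Calculation Method 2 does not stop at Step 3 in cycle $i-1$; moreover by Theorem \ref{T:CM1.1} ``(v)'' the output is the disparate kernel, $(F^{i-1})^*$, which has nonempty image sets, so the choice of $(x_{i-1},y_{i-1})\in G((F^{i-1})^*)$ in Step 4 is possible. By definition of the disparate kernel, there is a disparate selection $s$ of $F^{i-1}$ with $s(x_{i-1})=y_{i-1}$. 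Now $s$ is in particular a disparate selection of $(F^{i-1})^*$ (every disparate selection of $F^{i-1}$ is one of its disparate kernel, by Lemma \ref{L:kernel.1}), and $s(x_{i-1})=y_{i-1}$, so by Lemma \ref{L:complement1} applied to $(F^{i-1})^*$ the complement mapping $(F^{i-1})^*_{\{x_{i-1}\},\{(x_{i-1},y_{i-1})\}}$ admits a disparate selection. But this mapping is precisely $F^i$ by Step 6, which completes the induction.

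The only real point requiring care — and the step I expect to be the main (minor) obstacle — is the bookkeeping needed to invoke Lemma \ref{L:complement1} and Theorem \ref{T:CM1.1} with the right mapping in the right role: one must be sure that in cycle $i-1$ the method actually reaches Step 4 (i.e. does not abort at Step 3 or Step 5 prematurely), which is exactly where the induction hypothesis ``$F^{i-1}$ admits a disparate selection'' is consumed via Theorem \ref{T:CM1.1}. Everything else is a direct transcription of the definitions in Section \ref{S:Calculation2}. I would also remark, for use in the sequel, that since $F^i$ admits a disparate selection it has nonempty image sets, and that it is defined on $X\backslash\{x_1,\dots,x_{i-1}\}$, but these are already noted in the surrounding text.
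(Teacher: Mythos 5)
Your proof is correct and follows essentially the same route as the paper: induction on $i$, using that $F^{i-1}$ admitting a disparate selection makes $(F^{i-1})^*$ a nonempty disparate mapping, so that some disparate selection of $(F^{i-1})^*$ passes through the chosen point $(x_{i-1},y_{i-1})$ and Lemma \ref{L:complement1} then yields a disparate selection of $F^i$. The extra bookkeeping you supply (via Theorem \ref{T:CM1.1}) about the method actually reaching Step 4 is a harmless elaboration of what the paper compresses into the phrase ``$(F^{i-1})^*$ is a disparate mapping.''
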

\begin{proof} 
We prove the statement by induction on $i=1, \ldots, i^{max}_2$. The statement is true for $i=1$.
Let $2 \le i \le i^{max}_2$. By induction hypothesis $F^{i-1}$ admits a disparate selection. This
implies $(F^{i-1})^*$ is a disparate mapping and using Lemma \ref{L:complement1},
$F^i=(F^{i-1})^*_{\{x_{i-1}\},\{(x_{i-1},y_{i-1})\}}$ admits a disparate selection.
\end{proof}

\begin{theorem} \label{T:CM2.1}
The following statements are equivalent: 
\begin{enumerate}[(i)]
\item $F$ admits a disparate selection.
\item Calculation Method 2 does not stop at Step 3.  
\item Calculation Method 2 stops at Step 5.
\item ${i^{max}_2} = \sharp X$ and a disparate selection $s$ of $F$ is given by $s(x_i)=y_i$ for 
$i=1, \ldots, i^{max}_2$.
\end{enumerate}
\end{theorem}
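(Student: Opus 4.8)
The plan is to establish the chain of equivalences by leveraging the structural results already proved about Calculation Method 2, together with Theorem~\ref{T:CM1.1} applied to each intermediate mapping $F^i$. The key observations are that Step~3 of Calculation Method 2 aborts precisely when Calculation Method 1 (invoked on $F^i$) stops at its Step~5, which by Theorem~\ref{T:CM1.1} happens exactly when $F^i$ fails to admit a disparate selection; and that by Lemma~\ref{L:CM1.1} and Lemma~\ref{L:CM2.1} both methods terminate, so the only two exits of Calculation Method 2 are Step~3 and Step~5.

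First I would prove ``(i) $\Rightarrow$ (iv)''. Assuming $F$ admits a disparate selection, Lemma~\ref{L:CM2.2} gives that every $F^i$ occurring in the method admits a disparate selection, hence by Theorem~\ref{T:CM1.1} the inner call to Calculation Method 1 never stops at Step~5; therefore Calculation Method 2 never stops at Step~3 and must stop at Step~5, which forces $i^{max}_2 = \sharp X$. It then remains to verify that the selection $s$ defined by $s(x_i) = y_i$ is actually a disparate selection of $F$. For this I would argue by downward induction (or by iterating Lemma~\ref{L:complement1}): since $(x_i, y_i) \in G((F^i)^*) \subset G(F^i)$ and $F^{i+1} = (F^i)^*_{\{x_i\},\{(x_i,y_i)\}}$, Lemma~\ref{L:complement1} shows that a disparate selection of $F^{i+1}$ together with the value $y_i$ at $x_i$ extends to a disparate selection of $(F^i)^*$, and a disparate selection of $(F^i)^*$ is a disparate selection of $F^i$ by definition of the disparate kernel. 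Running this from $i = \sharp X$ down to $i=1$, and noting that at the last step $F^{i^{max}_2}$ is defined on the singleton $\{x_{i^{max}_2}\}$ with $(x_{i^{max}_2}, y_{i^{max}_2})$ in its disparate kernel, yields that $s$ is a disparate selection of $F^1 = F$ with $s(x_i) = y_i$ for all $i$.

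Next I would close the loop with the easy implications. ``(iv) $\Rightarrow$ (iii)'' is immediate since (iv) asserts $i^{max}_2 = \sharp X$, and the only way to reach index $\sharp X$ and terminate is via Step~5. ``(iii) $\Rightarrow$ (ii)'': if the method stops at Step~5 then, having terminated (Lemma~\ref{L:CM2.1}), it did not stop at Step~3. ``(ii) $\Rightarrow$ (i)'': if the method never stops at Step~3, then in particular in the first cycle Calculation Method 1 does not stop at its Step~5 when applied to $F^1 = F$, so by Theorem~\ref{T:CM1.1} (with $F^{i^{max}_1}$ nonempty) $F$ admits a disparate selection. The main obstacle I anticipate is the verification in the ``(i) $\Rightarrow$ (iv)'' step that the assembled $s$ is genuinely disparate on all of $F$: one must carefully track that the complement-mapping construction in Step~6 is compatible with the repeated application of Lemma~\ref{L:complement1}, in particular that $F^{i+1}$ being defined on $X \setminus \{x_1, \ldots, x_i\}$ (as the paper notes) means the inductive extension at stage $i$ correctly reinserts the coordinate $x_i$; this is the point where the bookkeeping is densest, though conceptually it is just an $\sharp X$-fold iteration of Lemma~\ref{L:complement1}.
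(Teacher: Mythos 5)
Your proof is correct and follows essentially the same route as the paper: both rely on Lemma \ref{L:CM2.1}, Lemma \ref{L:CM2.2} and Theorem \ref{T:CM1.1} to rule out a stop at Step~3, and on the complement-mapping construction (Lemma \ref{L:complement1}) to see that the assembled $s$ is disparate. The only difference is cosmetic: the paper closes the cycle as (i)$\Rightarrow$(ii)$\Rightarrow$(iii)$\Rightarrow$(iv)$\Rightarrow$(i), placing the disparateness verification in the step (iii)$\Rightarrow$(iv), whereas you traverse the cycle in the opposite order and do that verification inside (i)$\Rightarrow$(iv).
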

\begin{proof}
``(i) $\Rightarrow$ (ii)" This implication follows from Lemma \ref{L:CM2.2} and Theorem \ref{T:CM1.1}
applied to $F^i$.  \\
``(ii) $\Rightarrow$ (iii)" Using Lemma \ref{L:CM2.1}, Calculation Method 2 stops at Step 5.  \\
``(iii) $\Rightarrow$ (iv)" Using the construction of Calculation Method 2, ${i^{max}_2} = \sharp X$, 
i.e. $X = \{x_1, \ldots, x_{i^{max}_2}\}$, $s$ is a selection of $F$ and by definition of the complement 
mapping, $s$ is disparate.  \\
``(iv) $\Rightarrow$ (i)" $X = \{x_1, \ldots, x_{i^{max}_2}\}$ and $F$ admits the disparate selection $s$.
\end{proof}

                                        %
                                        %
\section{Transitive Problems} \label{S:transitive}
We introduce an additional condition on our main model which allows us to simplify the generalized 
Hall condition and the definition of generalized critical sets.

It is our purpose to find a condition such that the existence of Hall collection characterizes the 
existence of a disparate selection for transitive problems. We start with a modified definition of 
critical sets and show the relation to generalized critical sets.

\begin{definition} \label{D:size}
The size of a set $G \subset X \times Y$ is defined by
\[
size(G) = max\{\sharp A \mid A \subset G \mbox{ is a disparate set}\}.
\]
\end{definition}

Using the definition of $size$, $F$ admits a Hall collection if and only if $size(G(F_{\mid W})) \ge \sharp W$ 
for each $W \subset X$.

\begin{definition} \label{D:t-critical}
A nonempty set $W \subset X$ is called a t-critical set of $F$ if there exists
$(x,y) \in (X \backslash W) \times Y$ such that 
\[
size(G((F_{\{x\},\{(x,y)\}})_{\mid W})) \le \sharp W - 1.
\]
\end{definition}

The next lemma shows that each t-critical set is a generalized critical set.

\begin{lemma} \label{L:trans.1}
Let $W \subset X$ be a t-critical set of $F$. $W$ is a generalized critical set of $F$.
\end{lemma}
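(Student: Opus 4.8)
The plan is to produce, from the witness of t-criticality, a witness of generalized criticality — in fact the very same point $(x,y)$ will work. Since $W$ is a t-critical set of $F$, there is $(x,y) \in (X \backslash W) \times Y$ with $size(G(H)) \le \sharp W - 1$, where I abbreviate $H := (F_{\{x\},\{(x,y)\}})_{\mid W}$. Note that this choice is legitimate: because $x \in X \backslash W$, the restriction $(F_{\{x\},\{(x,y)\}})_{\mid W}$ makes sense, and $W$ is a proper nonempty subset of $X$, matching the implicit requirement $\sharp W < \sharp X$ of Definition \ref{D:critical}. The goal is then to show that $H$ does not satisfy the generalized Hall condition; by Definition \ref{D:critical} this is exactly what it means for $W$ to be a generalized critical set of $F$.

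The heart of the argument is the contrapositive claim: if $H$ satisfies the generalized Hall condition, then $size(G(H)) \ge \sharp W$. I would split on $\sharp W$. If $\sharp W = 1$, the generalized Hall condition for $H$ means $G(H) \ne \emptyset$ (Definition \ref{D:Hall1}); any singleton is a disparate set, so $size(G(H)) \ge 1 = \sharp W$. If $\sharp W > 1$, the generalized Hall condition for $H$ provides a distributed Hall collection of $H$ (Definition \ref{D:Hall2}), which is in particular a Hall collection of $H$; its member $A_W$ is then a disparate subset of $G(H_{\mid W}) = G(H)$ with $\sharp A_W \ge \sharp W$, so again $size(G(H)) \ge \sharp W$. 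In both cases this contradicts the t-criticality inequality $size(G(H)) \le \sharp W - 1$, so $H$ fails the generalized Hall condition, and the lemma follows. Alternatively, one may invoke Theorem \ref{T:main}: the generalized Hall condition would yield a disparate selection $s$ of $H$, and $G(s)$ is a disparate subset of $G(H)$ of cardinality exactly $\sharp W$, giving the same contradiction.

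I do not foresee a genuine obstacle; the lemma is really just a translation between two phrasings of the same failure. The only things that require a moment of care are the bookkeeping points noted above: that a t-critical $W$ is automatically a proper nonempty subset of $X$ (so that the inductive definition of the generalized Hall condition for $H$ applies), and the harmless identity $G(H_{\mid W}) = G(H)$ since $H$ is already defined only on $W$. No circularity arises, since Section \ref{S:transitive} comes after the development of the generalized Hall condition and of Theorem \ref{T:main}.
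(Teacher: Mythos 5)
Your proposal is correct and takes essentially the same route as the paper: the paper likewise reuses the t-criticality witness $(x,y)$ and observes that $size(G((F_{\{x\},\{(x,y)\}})_{\mid W})) \le \sharp W - 1$ rules out a Hall collection of that restriction, hence the generalized Hall condition fails. You merely make explicit (via the contrapositive and the case split on $\sharp W$) the step the paper leaves implicit, namely that satisfying the generalized Hall condition forces a disparate subset of the graph of cardinality at least $\sharp W$.
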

\begin{proof} 
Let $(x,y) \in (X \backslash W) \times Y$ such that $size(G((F_{\{x\},\{(x,y)\}})_{\mid W})) \le \sharp W - 1$.
Then $(F_{\{x\},\{(x,y)\}})_{\mid W}$ does not admit a Hall collection.
\end{proof}

The converse of Lemma \ref{L:trans.1} is not true.

\begin{example} \label{E:trans.1}
We extend Example a) of Fig. \ref{Fig2} as depicted in Fig. \ref{Fig3}. We choose $(x,y)=(5,4)$ and 
observe that $W=\{1,2,3,4\}$ is a generalized critical set of $F$ since $(F_{\{5\},\{(5,4)\}})_{\mid W}$ 
does not admit a disparate selection. But $\{(1,2), (1,3), (3,1), (4,1)\}$ is a disparate set in 
$G((F_{\{5\},\{(5,4)\}})_{\mid W})$, i.e., $W$ is not a t-critical set of $F$.
\end{example}

\begin{figure}
\includegraphics{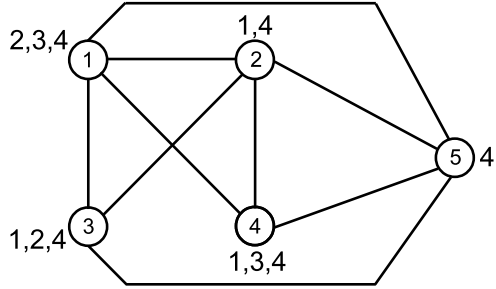}
\caption{Generalized critical but not t-critical set.} 
\label{Fig3}
\end{figure}

We extend the concept of minimal generalized critical sets to t-critical sets. A t-critical set $W \subset X$ 
of $F$ is called minimal if there does not exist a subset $W^\prime \subset W$, $W^\prime \ne W$, 
such that $W^\prime$ is a t-critical set of $F$.

\begin{lemma} \label{L:trans.2}
Let $W \subset X$ be a minimal generalized critical set and a t-critical set of $F$. $W$ is a 
minimal t-critical set of $F$.
\end{lemma}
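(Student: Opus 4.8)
The plan is to argue by contradiction, exploiting the one-directional containment between the two notions of criticality recorded in Lemma \ref{L:trans.1}. First I would note that $W$ is t-critical by hypothesis, so the only thing left to establish is that $W$ has no proper t-critical subset. Suppose then, for contradiction, that there is a set $W^\prime \subset X$ with $W^\prime \subset W$, $W^\prime \ne W$, which is a t-critical set of $F$. Applying Lemma \ref{L:trans.1} to $W^\prime$ shows that $W^\prime$ is a generalized critical set of $F$.

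This immediately contradicts the hypothesis that $W$ is a \emph{minimal} generalized critical set, since by definition minimality of $W$ forbids the existence of any proper subset of $W$ that is a generalized critical set. Hence no such $W^\prime$ exists, and $W$ is a minimal t-critical set of $F$.

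I do not expect a genuine obstacle here: the statement is a formal consequence of the inclusion ``t-critical $\Rightarrow$ generalized critical'' together with the fact that minimality with respect to a larger family of ``bad'' sets is inherited when one restricts to a subfamily. The only point requiring a moment's care is the direction of Lemma \ref{L:trans.1} — we need that every t-critical set is generalized critical, not the converse, which fails in general (cf. Example \ref{E:trans.1}) — but that is exactly the direction available to us.
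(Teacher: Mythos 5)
Your argument is correct and is precisely the one the paper intends: its proof of this lemma is the single line ``This is a consequence of Lemma \ref{L:trans.1}'', and your contradiction argument is exactly the spelled-out version of that reduction. Nothing further is needed.
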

\begin{proof} 
This is a consequence of Lemma \ref{L:trans.1}.
\end{proof}

We introduce a formal description of transitive problems.

\begin{definition} \label{D:transitive}
A graph $(X,E)$ is called transitive if $\{x,x^\prime \} \in E$, $\{x^\prime,x^{\prime\prime}\} \in E$ 
and $x \ne x^{\prime\prime}$ implies $\{x,x^{\prime\prime}\} \in E$.
\end{definition}

\begin{lemma} \label{L:trans.4}
Let $(X,E)$ be transitive, let $a, a^\prime, a^{\prime\prime} \in X \times Y$ such that $a, a^\prime$
are not disparate and $a^\prime, a^{\prime\prime}$ are not disparate.  $a, a^{\prime\prime}$ are not 
disparate. 
\end{lemma}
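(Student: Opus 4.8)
The statement to prove is Lemma~\ref{L:trans.4}: in a transitive graph, "not disparate" is a transitive relation on $X \times Y$. The plan is to unwind the definition of disparate points (Definition~\ref{D:disparate}) in its negated form. Recall from the text immediately after Definition~\ref{D:disparate} that two points $(x,y),(x',y') \in X \times Y$ are \emph{not} disparate if and only if either they are identical, or $y = y'$ and $\{x,x'\} \in E$.

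So write $a = (x,y)$, $a' = (x',y')$, $a'' = (x'',y'')$. The hypotheses are that $a,a'$ are not disparate and $a',a''$ are not disparate. First I would dispose of the trivial cases: if $a = a'$, then the second hypothesis gives directly that $a' = a, a''$ are not disparate, hence $a, a''$ are not disparate; symmetrically if $a' = a''$. Thus we may assume $a \ne a'$ and $a' \ne a''$, so both pairs fall into the second alternative: $y = y'$ and $\{x,x'\} \in E$, and $y' = y''$ and $\{x',x''\} \in E$. In particular $y = y' = y''$, so certainly $y = y''$. It remains to show $a, a''$ are not disparate, i.e.\ to handle the two subcases $x = x''$ and $x \ne x''$. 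If $x = x''$ then $a = a''$ (since also $y = y''$), so $a, a''$ are not disparate. If $x \ne x''$, then from $\{x,x'\} \in E$, $\{x',x''\} \in E$ and transitivity of $(X,E)$ (Definition~\ref{D:transitive}) we conclude $\{x,x''\} \in E$; combined with $y = y''$ this says exactly that $a, a''$ are not disparate.

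The only point requiring any care is that the definition of transitivity has the side condition $x \ne x''$, which is precisely why the case split on whether $x = x''$ is needed; but that case is immediate. There is no real obstacle here — the lemma is essentially a bookkeeping exercise translating graph transitivity into transitivity of the "not disparate" relation, and the proof is a short case analysis on which of the two points in each non-disparate pair coincide.
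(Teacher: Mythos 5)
Your proof is correct and follows essentially the same route as the paper's: negate Definition~\ref{D:disparate}, extract $y=y'=y''$ and the two edges, and apply transitivity of $(X,E)$. In fact you are slightly more careful than the paper, which disposes only of the case $a=a''$ and tacitly assumes $a\ne a'$ and $a'\ne a''$ when reading off the edges; your explicit treatment of those degenerate cases and of the subcase $x=x''$ closes that small gap.
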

\begin{proof}
If $a=a^{\prime\prime}$, $a,a^{\prime\prime}$ are not disparate. Otherwise let $a=(x,y)$,
$a^\prime=(x^\prime,y^\prime)$ and $a^{\prime\prime}=(x^{\prime\prime},y^{\prime\prime})$. 
By assumption $y=y^\prime$, $\{x,x^\prime\} \in E$, $y^\prime = y^{\prime\prime}$ and 
$\{x^\prime,x^{\prime\prime}\} \in E$. This implies $y=y^{\prime\prime}$ and 
$\{x,x^{\prime\prime}\} \in E$, since $(X,E)$ is transitive and $a \ne a^{\prime\prime}$. Then 
$a=(x,y)$ and $a^{\prime\prime}=(x^{\prime\prime},y^{\prime\prime})$ are not disparate.
\end{proof}  

\begin{lemma} \label{L:trans.5}
Let $(X,E)$ be transitive, let $A \subset X \times Y$ be a disparate set and let $b \in X \times Y$.
$\sharp (A \cap hull(\{b\})) \le 1$. 
\end{lemma}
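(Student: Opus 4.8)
The plan is to argue by contradiction: suppose $a, a' \in A \cap hull(\{b\})$ with $a \ne a'$. By definition of the hull, $a$ and $b$ are not disparate, and $a'$ and $b$ are not disparate. Since $(X,E)$ is transitive, I would invoke Lemma \ref{L:trans.4} (with $a \to a$, $b \to a'$ in the role of the middle element $a'$, and $a' \to a''$) to conclude that $a$ and $a'$ are not disparate. But $a, a' \in A$ and $A$ is a disparate set, so any two distinct points of $A$ are disparate — contradiction. Hence at most one point of $A$ lies in $hull(\{b\})$.

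To make the application of Lemma \ref{L:trans.4} clean, I need to check the hypotheses carefully: the lemma requires a middle point $a^\prime$ such that $a, a^\prime$ are not disparate and $a^\prime, a^{\prime\prime}$ are not disparate, and concludes $a, a^{\prime\prime}$ are not disparate. Here the middle point is $b$, so I should verify $a$ and $b$ are not disparate (immediate from $a \in hull(\{b\})$ and Definition \ref{D:hull}, since $b$ is the only element of $\{b\}$) and similarly $a'$ and $b$ are not disparate. Note that Lemma \ref{L:trans.4} does not require the three points to be distinct, and in fact handles the case $a = a''$ separately, so there is no degenerate-case worry; but since I have assumed $a \ne a'$ here, I genuinely obtain that two distinct elements of $A$ fail to be disparate, which is the needed contradiction with Definition \ref{D:disparateSet}.

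There is no real obstacle here — the statement is essentially the observation that ``not disparate'' behaves like an equivalence-type relation on a transitive graph (reflexive, symmetric, and, by Lemma \ref{L:trans.4}, transitive), so a disparate set can meet each equivalence class (each hull of a singleton) in at most one point. The only thing to be slightly careful about is the direction of the symmetry: ``not disparate'' is manifestly symmetric from Definition \ref{D:disparate} (the condition $(x,y)\ne(x',y')$, $y=y'$, $\{x,x'\}\in E$ is symmetric in the two points), so applying Lemma \ref{L:trans.4} with $b$ in the middle is legitimate regardless of the order in which the non-disparateness was recorded.

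In summary, the proof is: assume for contradiction distinct $a, a' \in A \cap hull(\{b\})$; extract from Definition \ref{D:hull} that $a$ and $b$ are not disparate and $a'$ and $b$ are not disparate; apply Lemma \ref{L:trans.4} to get that $a$ and $a'$ are not disparate; contradict the hypothesis that $A$ is disparate (Definition \ref{D:disparateSet}). Therefore $\sharp(A \cap hull(\{b\})) \le 1$.
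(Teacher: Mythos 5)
Your proof is correct and is essentially the paper's argument: both take two points of $A \cap hull(\{b\})$, note from the definition of the hull that each is not disparate from $b$, apply Lemma \ref{L:trans.4} with $b$ as the middle point to conclude the two points are not disparate, and then use disparateness of $A$ (you phrase it as a contradiction, the paper directly concludes $a = a'$). The extra checks on symmetry and the hypotheses of Lemma \ref{L:trans.4} are fine but not needed beyond what the paper does.
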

\begin{proof}
Let $a, a^\prime \in A \cap hull(\{b\})$. Using Lemma \ref{L:trans.4}, $a, a^\prime$ are not disparate.
Since $A$ is disparate, $a = a^\prime$.
\end{proof}

\begin{lemma} \label{L:trans.6}
Let $(X,E)$ be transitive, let $W \subset X$ be a nonempty set and let 
$(x,y) \in (X \backslash W) \times Y$.
\[
size(G(F_{\mid W})) \le size(G((F_{\{x\},\{(x,y)\}})_{\mid W})) + 1.
\]
\end{lemma}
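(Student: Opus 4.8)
The plan is to reduce everything to a single application of Lemma \ref{L:trans.5}. The key preliminary observation is a description of the graph of the restricted complement mapping: since $x \notin W$, we have $W \subset X \backslash \{x\}$, so by the definition of the complement mapping
\[
G((F_{\{x\},\{(x,y)\}})_{\mid W}) = \bigl(G(F_{\mid (X \backslash \{x\})}) \cap compl(\{(x,y)\})\bigr) \cap (W \times Y) = G(F_{\mid W}) \cap compl(\{(x,y)\}).
\]
So it suffices to show $size(G(F_{\mid W})) \le size(G(F_{\mid W}) \cap compl(\{(x,y)\})) + 1$.

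Next I would pick a disparate set $A \subset G(F_{\mid W})$ witnessing the size, i.e., $\sharp A = size(G(F_{\mid W}))$. By Lemma \ref{L:32} applied to the single-point set $\{(x,y)\}$, the sets $hull(\{(x,y)\})$ and $compl(\{(x,y)\})$ partition $X \times Y$, so $A$ is the disjoint union of $A \cap hull(\{(x,y)\})$ and $A \cap compl(\{(x,y)\})$. Now invoke Lemma \ref{L:trans.5} with $b = (x,y)$: since $(X,E)$ is transitive and $A$ is disparate, $\sharp(A \cap hull(\{(x,y)\})) \le 1$. Therefore the set $A' := A \cap compl(\{(x,y)\})$ satisfies $\sharp A' \ge \sharp A - 1$.

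Finally, $A'$ is a subset of a disparate set, hence itself disparate, and $A' \subset G(F_{\mid W}) \cap compl(\{(x,y)\}) = G((F_{\{x\},\{(x,y)\}})_{\mid W})$. By the definition of $size$ this gives
\[
size(G((F_{\{x\},\{(x,y)\}})_{\mid W})) \ge \sharp A' \ge \sharp A - 1 = size(G(F_{\mid W})) - 1,
\]
which rearranges to the claimed inequality. There is no real obstacle here: the only point requiring care is the identification of $G((F_{\{x\},\{(x,y)\}})_{\mid W})$ in the first paragraph, and the only substantive input is Lemma \ref{L:trans.5}, which is where transitivity of $(X,E)$ enters.
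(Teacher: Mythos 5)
Your proof is correct and follows essentially the same route as the paper's: choose a maximum disparate set $A \subset G(F_{\mid W})$, split it via Lemma \ref{L:32} into its intersections with $hull(\{(x,y)\})$ and $compl(\{(x,y)\})$, and bound the former by $1$ using Lemma \ref{L:trans.5}. The only difference is cosmetic — you make the identification $G((F_{\{x\},\{(x,y)\}})_{\mid W}) = G(F_{\mid W}) \cap compl(\{(x,y)\})$ explicit up front, which the paper leaves implicit.
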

\begin{proof}
Choose a set $A \subset G(F_{\mid W})$ such that $A$ is disparate and 
$\sharp A = size(G(F_{\mid W})) $. By definition of the complement mapping
\[
A \cap G((F_{\{x\},\{(x,y)\}})_{\mid W}) = A \cap compl(\{(x,y)\}).
\]
Using Lemma \ref{L:32} and \ref{L:trans.5} we obtain
\begin{align*}                                                   
size(G(F_{\mid W}))
& = \sharp A  \\
& = \sharp (A \cap compl(\{(x,y)\})) + \sharp (A \cap hull(\{(x,y)\}))  \\
& \le \sharp (A \cap G((F_{\{x\},\{(x,y)\}})_{\mid W}) + 1  \\
& \le size(G((F_{\{x\},\{(x,y)\}})_{\mid W})) + 1.
\end{align*}
\end{proof}        

\begin{lemma} \label{L:trans.7}
Let $(X,E)$ be transitive, let $G \subset X \times Y$ and let $A \subset hull(G)$ be a disparate set
such that $A \backslash G \ne \emptyset$. There exists $a \in A \backslash G$ and 
$g \in G \backslash A$ such that $A \backslash \{a\} \cup \{g\}$ is a disparate set.
\end{lemma}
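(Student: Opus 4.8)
The idea is to do a single exchange step: pick any bad element of $A$, use membership in $hull(G)$ to find a witness in $G$, and show that swapping them in keeps the set disparate thanks to transitivity (Lemma \ref{L:trans.4}).

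First I would choose an element $a \in A \backslash G$, which is possible by hypothesis. Since $a \in A \subset hull(G)$, by the definition of the hull there exists $g \in G$ such that $a$ and $g$ are not disparate. Because $a \notin G$ and $g \in G$, we have $a \ne g$; and since $A$ is a disparate set containing $a$, the element $g$ cannot lie in $A$ (otherwise $a,g$ would be two distinct non-disparate points of $A$), so $g \in G \backslash A$. This already produces the required $a$ and $g$, and it remains only to verify that $(A \backslash \{a\}) \cup \{g\}$ is disparate.

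To check disparateness, take two distinct points of $(A \backslash \{a\}) \cup \{g\}$. If both lie in $A \backslash \{a\}$ they are disparate because $A$ is. The remaining case is a pair $a', g$ with $a' \in A \backslash \{a\}$ (note $a' \ne g$ since $g \notin A$). Suppose for contradiction that $a'$ and $g$ are not disparate. Then $a'$ and $g$ are not disparate and $g$ and $a$ are not disparate, so by Lemma \ref{L:trans.4} (using the transitivity of $(X,E)$) $a'$ and $a$ are not disparate; but $a', a \in A$ and $A$ is disparate, hence $a' = a$, contradicting $a' \in A \backslash \{a\}$. Therefore $a'$ and $g$ are disparate, and $(A \backslash \{a\}) \cup \{g\}$ is a disparate set.

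There is essentially no hard step here; the only point requiring care is the bookkeeping that $g \notin A$ (so that the swap genuinely removes one element and adds a new one), and the single application of Lemma \ref{L:trans.4} with the middle element $g$. If one wanted to, the argument could be phrased via the complement/$compl$ language, but the direct verification above is the shortest route.
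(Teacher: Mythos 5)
Your proof is correct and follows essentially the same route as the paper's: pick $a \in A \setminus G$, use the hull to find a non-disparate witness $g \in G$ (which must lie outside $A$ and differ from $a$), and apply Lemma \ref{L:trans.4} with $g$ as the middle element to see that $g$ is disparate from every $a' \in A \setminus \{a\}$. Your write-up is just slightly more explicit than the paper's about the bookkeeping ($a \ne g$, $g \notin A$) and about the contrapositive use of Lemma \ref{L:trans.4}.
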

\begin{proof}
By assumption there exists $a \in A \backslash G$ and this implies $a \in hull(G)$. By definition of 
the hull there exists $g \in G$ such that $a$ and $g$ are not disparate, i.e., $g \in G \backslash A$. 
The set $A \backslash \{a\}$ is disparate. Let $a^\prime \in A \backslash \{a\}$. Using Lemma 
\ref{L:trans.4}, $a^\prime$ and $g$ are disparate.
\end{proof} 

\begin{lemma} \label{L:trans.8}
Let $(X,E)$ be transitive and let $G \subset X \times Y$. $size(G) = size(hull(G))$.
\end{lemma}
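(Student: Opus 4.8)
The plan is to prove the two inequalities $size(G) \le size(hull(G))$ and $size(hull(G)) \le size(G)$ separately. The first is immediate: since $A \subset G$ and $G \subset hull(G)$ imply $A \subset hull(G)$, every disparate subset of $G$ is a disparate subset of $hull(G)$, so by Definition \ref{D:size} we get $size(G) \le size(hull(G))$. Note this direction does not use transitivity.

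For the reverse inequality, the idea is a swapping argument driven by Lemma \ref{L:trans.7}. First I would choose a disparate set $A \subset hull(G)$ realizing the maximum, i.e. $\sharp A = size(hull(G))$. Then I would argue by induction on $\sharp (A \backslash G)$ that there exists a disparate set $A' \subset G$ with $\sharp A' = \sharp A$. In the base case $A \backslash G = \emptyset$, so $A \subset G$ and we are done. In the inductive step $A \backslash G \ne \emptyset$, so Lemma \ref{L:trans.7} (which is where transitivity enters) yields $a \in A \backslash G$ and $g \in G \backslash A$ such that $(A \backslash \{a\}) \cup \{g\}$ is a disparate set; call it $A'$. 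Since $a \in A$ and $g \notin A$ we have $\sharp A' = \sharp A$, since $A \backslash \{a\} \subset hull(G)$ and $g \in G \subset hull(G)$ we have $A' \subset hull(G)$, and $A' \backslash G = (A \backslash G) \backslash \{a\}$ has strictly smaller cardinality. Applying the induction hypothesis to $A'$ produces a disparate subset of $G$ of cardinality $\sharp A' = \sharp A$, hence $size(G) \ge \sharp A = size(hull(G))$. Combining the two inequalities gives $size(G) = size(hull(G))$.

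I do not expect any serious obstacle here: the content is entirely packaged in Lemma \ref{L:trans.7}, and the only thing to be careful about is bookkeeping — verifying that the swap preserves cardinality, keeps the set inside $hull(G)$, and strictly decreases $\sharp(A \backslash G)$ so that the induction terminates. The choice of induction variable ($\sharp(A \backslash G)$ rather than, say, $\sharp A$) is the one mildly non-obvious point, but it is forced by the shape of Lemma \ref{L:trans.7}.
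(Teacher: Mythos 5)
Your proposal is correct and follows essentially the same route as the paper: both reduce the nontrivial inequality $size(hull(G)) \le size(G)$ to repeated application of the swap in Lemma \ref{L:trans.7}. The only difference is presentational — the paper picks a maximum-size disparate $A \subset hull(G)$ that is extremal for $\sharp(A \cap G)$ and derives a contradiction from a single swap, whereas you run the swap as a descent on $\sharp(A \setminus G)$; the two are interchangeable and your bookkeeping (cardinality preserved, containment in $hull(G)$ preserved, strict decrease of the induction variable) is all in order.
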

\begin{proof}
Let $A \subset hull(G)$ be a disparate set such that $\sharp A = size(hull(G))$ and
$\sharp (A \cap G) \ge \sharp (A^\prime \cap G)$ for each disparate set $A^\prime \subset hull(G)$
such that $\sharp A^\prime = size(hull(G))$. Suppose $A \not\subset G$. Then 
$A \backslash G \ne \emptyset$ and we apply Lemma \ref{L:trans.7}. There exists $a \in A \backslash G$ 
and $g \in G \backslash A$ such that $A^\prime = A \backslash \{a\} \cup \{g\} \subset hull(G)$ is a 
disparate set. We obtain $\sharp A^\prime = \sharp A = size(hull(G))$ and 
$\sharp (A^\prime \cap G) = \sharp (A \backslash \{a\} \cap G) + 1 
= \sharp (A \cap G) +1  > \sharp (A \cap G)$ and this contradicts the assumption, i.e., $A \subset G$.
We conclude $size(hull(G)) = \sharp A \le size(G) \le size(hull(G))$.
\end{proof} 

\begin{lemma} \label{L:trans.9}
Let $(X,E)$ be transitive, let $W \subset X$ be a t-critical set of $F$ and let $\mathcal{H}$
be a collection of $F$. $\mathcal{H}$ is $W$-distributed.
\end{lemma}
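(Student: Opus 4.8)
Fix a nonempty $V \subset X \backslash W$ and write $Z = \bigcup_{A \in \mathcal{H}_{\mid W}} A$. By Definition \ref{D:distributed1} it suffices to prove
\[
\sharp D \le \sharp W, \qquad D := A_{W \cup V} \cap \big( (W \times Y) \cup (hull(Z) \cap (V \times Y)) \big).
\]
The plan is to recognize $D$ as a disparate subset of $hull(Z)$ and then to bound $size(hull(Z))$ from above by $\sharp W$, using transitivity together with the t-criticality of $W$.

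For the size bound I would first invoke Lemma \ref{L:trans.8} to get $size(hull(Z)) = size(Z)$. Since every member of $\mathcal{H}_{\mid W}$ is a subset of $G(F_{\mid W})$, we have $Z \subset G(F_{\mid W})$, hence $size(Z) \le size(G(F_{\mid W}))$ by Definition \ref{D:size}. Now use that $W$ is t-critical: Definition \ref{D:t-critical} supplies $(x,y) \in (X \backslash W) \times Y$ with $size(G((F_{\{x\},\{(x,y)\}})_{\mid W})) \le \sharp W - 1$, and Lemma \ref{L:trans.6} then yields $size(G(F_{\mid W})) \le size(G((F_{\{x\},\{(x,y)\}})_{\mid W})) + 1 \le \sharp W$. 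Chaining these inequalities, $size(hull(Z)) \le \sharp W$.

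Next I would verify that $D$ is a disparate set: $A_{W \cup V}$ is disparate because $\mathcal{H}$ is a Hall collection, and $D \subset A_{W \cup V}$. The remaining, and main, point is the containment $D \subset hull(Z)$. Points of $D$ lying in $V \times Y$ are in $hull(Z)$ by the very construction of $D$, so consider $d \in A_{W \cup V} \cap (W \times Y)$; then $d \in G(F_{\mid W})$. Here I would use that $A_W \in \mathcal{H}_{\mid W}$, so $A_W \subset Z$, and that $A_W$ is disparate with $\sharp A_W \ge \sharp W$ (Hall collection); combined with $size(G(F_{\mid W})) \le \sharp W$ from above, $A_W$ is a maximum disparate subset of $G(F_{\mid W})$. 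If $d \notin hull(Z)$, then $d \in compl(Z) \subset compl(A_W)$ by Lemma \ref{L:32} and the monotonicity of $compl$, so Lemma \ref{L:35} applied with $A = A_W$ and $B = \{d\}$ makes $A_W \cup \{d\}$ a disparate subset of $G(F_{\mid W})$ of cardinality $\sharp A_W + 1 > \sharp W$, contradicting $size(G(F_{\mid W})) \le \sharp W$. Hence $d \in hull(Z)$, so $D \subset hull(Z)$, and therefore $\sharp D \le size(hull(Z)) \le \sharp W$, which is the required inequality.

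I expect the containment $D \subset hull(Z)$ to be the only real obstacle: the fragment of $D$ in $V \times Y$ is forced into $hull(Z)$ by the very shape of the $W$-distributed condition, but the fragment in $W \times Y$ demands knowing that $A_W$ already saturates $G(F_{\mid W})$ as a disparate set — which is exactly where the t-criticality of $W$ (via Lemmas \ref{L:trans.6} and \ref{L:trans.8}, hence transitivity) meets the Hall property of $\mathcal{H}$. Everything else reduces to routine manipulation of the elementary monotonicity properties of $hull$, $compl$ and $size$ recorded earlier.
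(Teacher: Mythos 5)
Your proof is correct in substance, but it takes a noticeably heavier route than the paper at exactly the point you identify as ``the main obstacle,'' and that obstacle is in fact avoidable. The paper does not try to show that the set $D$ lies in $hull(Z)$; it only observes that $D \subset A_{W\cup V}\cap hull(G(F_{\mid W}))$, which is immediate: points of $A_{W\cup V}$ in $W\times Y$ lie in $G(F_{\mid W\cup V})\cap (W\times Y)=G(F_{\mid W})\subset hull(G(F_{\mid W}))$, and the $V\times Y$ fragment lies in $hull(Z)\subset hull(G(F_{\mid W}))$ by monotonicity of the hull, since $Z\subset G(F_{\mid W})$. From there the paper concludes with the same size chain you use, namely $\sharp D\le size(hull(G(F_{\mid W})))=size(G(F_{\mid W}))\le \sharp W$ via Lemmas \ref{L:trans.8} and \ref{L:trans.6} and t-criticality. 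Your exchange argument (if $d\in compl(Z)\subset compl(A_W)$ then $A_W\cup\{d\}$ is a disparate subset of $G(F_{\mid W})$ of cardinality exceeding $size(G(F_{\mid W}))$) is valid, but it is extra work that buys nothing here, because the distributed condition only requires an upper bound through $hull(G(F_{\mid W}))$, not through $hull(Z)$.

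The more substantive difference is what the two arguments consume. Your detour needs the full Hall property of $\mathcal{H}$ twice: disparateness of $A_{W\cup V}$ (to bound $\sharp D$ by a $size$) and the cardinality bound $\sharp A_W\ge\sharp W$ (to make $A_W$ a maximum disparate subset of $G(F_{\mid W})$). The statement, however, only hypothesizes that $\mathcal{H}$ is a collection. The paper's proof needs disparateness of $A_{W\cup V}$ as well --- the step $\sharp(A_{W\cup V}\cap hull(G(F_{\mid W})))\le size(hull(G(F_{\mid W})))$ is false for a non-disparate $A_{W\cup V}$, so this hypothesis is implicitly required either way --- but it never uses $\sharp A_W\ge\sharp W$. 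This matters downstream: in Lemma \ref{L:trans.11} the lemma is applied to the derived collections $\mathcal{H}(W_1,\ldots,W_{i-1})$, which at that stage are only known to be collections of disparate sets; establishing that they are genuine Hall collections is exactly what requires the distributedness being proved (Lemma \ref{L:cascadeHall}). Your argument could still be salvaged there by an explicit bootstrapping induction along the cascade, but as written it assumes more than the application can supply without that extra step.
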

\begin{proof}
By assumption there exists $(x,y) \in (X \backslash W) \times Y$ such that 
$size(G((F_{\{x\},\{(x,y)\}})_{\mid W})) \le \sharp W - 1$. Let 
$\mathcal{H} = \{A_V \subset G(F_{\mid V}) \mid V \subset X, V \ne \emptyset\}$ and let
$V \subset X \backslash W$ be a nonempty set. 
From set theory we obtain
\[
A_{W \cup V} \cap ((W \times Y) \cup (hull(\bigcup_{A \in \mathcal{H}_{\mid W}}A) \cap (V \times Y )))
\subset  A_{W \cup V} \cap hull(G(F_{\mid W})).
\]
Using Lemma \ref{L:trans.6} and \ref{L:trans.8} we obtain
\begin{align*}
& \sharp(A_{W \cup V} \cap ((W \times Y) \cup 
(hull(\bigcup_{A \in \mathcal{H}_{\mid W}}A) \cap (V \times Y ))))  \\
& \le \sharp(A_{W \cup V} \cap hull(G(F_{\mid W})))  \\
& \le size(hull(G(F_{\mid W})))  \\
& =  size(G(F_{\mid W}))  \\
& \le size(G((F_{\{x\},\{(x,y)\}})_{\mid W})) +1  \\
& \le \sharp W - 1 + 1  \\
& = \sharp W.
\end{align*}
\end{proof}  

\begin{lemma} \label{L:trans.11}
Let $(X,E)$ be transitive and let $\mathcal{H}$ be a Hall collection of $F$. $F$ satisfies the 
generalized Hall condition.
\end{lemma}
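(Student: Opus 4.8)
The plan is to prove this by induction on $\sharp X$, mirroring the structure of the sufficiency argument (Lemmas \ref{L:74}--\ref{L:79}) but exploiting transitivity to upgrade an arbitrary Hall collection into a \emph{distributed} Hall collection. For $\sharp X = 1$ a Hall collection forces $\sharp A_{\{x\}} \ge 1$, hence $G(F) \ne \emptyset$ and the generalized Hall condition holds (Definition \ref{D:Hall1}). So assume $\sharp X > 1$ and that the statement holds for all vertex sets of smaller cardinality. The goal is to show $\mathcal{H}$ itself — or a Hall collection derived from it — is distributed, i.e. that for every primitive $(F,\mathcal{H})$-critical cascade $(W_1,\ldots,W_k)$ in $X$ the collection $\mathcal{H}(W_1,\ldots,W_{i-1})$ is $W_i$-distributed for each $i$.

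The key observation is Lemma \ref{L:trans.9}: in a transitive graph, if $W$ is a t-critical set of a mapping then \emph{every} collection of that mapping is automatically $W$-distributed. So the plan is: first argue that along a primitive $(F,\mathcal{H})$-critical cascade, each $W_i$ is not merely a minimal generalized critical set of $F(W_1,\ldots,W_{i-1})$ but in fact a t-critical set of it. For this I would want to know that in the transitive setting, on the mappings that arise in the cascade, ``generalized critical'' and ``t-critical'' coincide (at least for minimal ones) — this should follow by invoking the induction hypothesis on the relevant restricted mapping together with Lemma \ref{L:trans.6}/\ref{L:trans.8}: if $W$ is a generalized critical set of some mapping $G$ (so $(G_{\{x\},\{(x,y)\}})_{\mid W}$ fails the generalized Hall condition for some $(x,y)$), then since $\sharp W < \sharp X$ the induction hypothesis (contrapositive of the lemma being proved, applied to the graph on $W$) says $(G_{\{x\},\{(x,y)\}})_{\mid W}$ admits no Hall collection, i.e. $size(G((G_{\{x\},\{(x,y)\}})_{\mid W})) \le \sharp W - 1$, which is exactly t-criticality. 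Once each $W_i$ is t-critical of $F(W_1,\ldots,W_{i-1})$, Lemma \ref{L:trans.9} gives that $\mathcal{H}(W_1,\ldots,W_{i-1})$ — being a collection of that mapping by Lemma \ref{L:cascade1} — is $W_i$-distributed. Since this holds for every $i$ and every primitive critical cascade, $\mathcal{H}$ is a distributed Hall collection of $F$ (Definition \ref{D:distributed2}), hence $F$ satisfies the generalized Hall condition (Definition \ref{D:Hall2}).

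A subtlety I would need to pin down carefully is that transitivity is inherited by all the mappings appearing in the cascade: $F(W_1,\ldots,W_{i-1})$ is defined on $pre(i) \subseteq X$, and the induced graph on any subset of a transitive graph is again transitive (immediate from Definition \ref{D:transitive}), so the induction hypothesis and Lemma \ref{L:trans.9} genuinely apply at each stage. I also need that $\mathcal{H}(W_1,\ldots,W_{i-1})$ is actually a \emph{collection} of $F(W_1,\ldots,W_{i-1})$, which is precisely Lemma \ref{L:cascade1}(i). The main obstacle is the first half of the middle step — rigorously establishing that membership in a primitive $(F,\mathcal{H})$-critical cascade forces t-criticality of each $W_i$ with respect to the correct mapping; this requires threading the induction hypothesis through the definition of generalized critical set (Definition \ref{D:critical}) for the restricted mapping $(F(W_1,\ldots,W_{i-1})_{\{x\},\{(x,y)\}})_{\mid W_i}$, and checking that ``fails the generalized Hall condition'' on a set of size $< \sharp X$ is equivalent, via the inductive hypothesis, to ``admits no Hall collection'' and hence to the size inequality defining t-critical. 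Everything after that is a direct appeal to Lemma \ref{L:trans.9} and the definitions.
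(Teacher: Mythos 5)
Your proposal follows the paper's proof essentially step for step: induction on $\sharp X$, reduction of the distributedness requirement to showing that each $W_i$ in a primitive $(F,\mathcal{H})$-critical cascade is a t-critical set of $F(W_1,\ldots,W_{i-1})$, and then an appeal to Lemma \ref{L:trans.9} together with Lemma \ref{L:cascade1}. The one place where your write-up is not yet a proof is exactly the step you single out as the main obstacle, and the missing ingredient is the minimality of $W_i$. ``Admits no Hall collection'' does \emph{not} mean $size(G((F(W_1,\ldots,W_{i-1})_{\{x\},\{(x,y)\}})_{\mid W_i})) \le \sharp W_i - 1$; it means only that there is some nonempty subset $W^\prime \subset W_i$ with $size(G((F(W_1,\ldots,W_{i-1})_{\{x\},\{(x,y)\}})_{\mid W^\prime})) \le \sharp W^\prime - 1$, i.e., that some subset $W^\prime$ of $W_i$ is t-critical. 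The paper closes this gap by noting (Lemma \ref{L:trans.1}) that such a $W^\prime$ is then a generalized critical set of $F(W_1,\ldots,W_{i-1})$, and since the cascade is primitive, $W_i$ is a \emph{minimal} generalized critical set, which forces $W^\prime = W_i$; only then is $W_i$ itself t-critical and Lemma \ref{L:trans.9} applicable. You gesture at this (``at least for minimal ones''), but the chain ``no Hall collection, hence the size inequality for $W_i$'' as written is a non sequitur without that minimality argument. With that single step filled in, your argument coincides with the paper's; your side remarks (transitivity is inherited by induced subgraphs, and Lemma \ref{L:cascade1} supplies both that $\mathcal{H}(W_1,\ldots,W_{i-1})$ is a collection of $F(W_1,\ldots,W_{i-1})$ and that the latter is defined on $pre(i)$, so that the point $(x,y)$ can be taken in $(pre(i)\backslash W_i)\times Y$) are correct and are used implicitly in the paper as well.
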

\begin{proof}
We prove the statement by induction on the number $\sharp X$ of elements in $X$. The statement is 
true for $\sharp X = 1$. 

Let $\sharp X > 1$, let $(W_1, \ldots, W_k)$, $k \in \mathbb{N}$, be a 
primitive $(F,\mathcal{H})$-critical cascade in $W_0=X$ and let $1 \le i \le k$. $W_i$ is a minimal 
generalized critical set of $F(W_1, \ldots, W_{i-1})$. Using Lemma \ref{L:cascade1} ``(ii)" there 
exists $(x,y) \in (pre(i) \backslash W_i) \times Y$ such that 
$(F(W_1, \ldots, W_{i-1})_{\{x\},\{(x,y)\}})_{\mid W_i}$
does not satisfy the generalized Hall condition. In particular, $\sharp W_i < \sharp X$ and by induction 
hypothesis $(F(W_1, \ldots, W_{i-1})_{\{x\},\{(x,y)\}})_{\mid W_i}$ does not admit a Hall collection. 
There exists a subset $W^\prime \subset W_i$ such that
\[
size(G((F(W_1, \ldots, W_{i-1})_{\{x\},\{(x,y)\}})_{\mid W^\prime})) \le \sharp W^\prime - 1,
\]
i.e., $W^\prime$ is a t-critical set of $F(W_1, \ldots, W_{i-1})$. Using Lemma \ref{L:trans.1},
$W^\prime$ is a generalized critical set of $F(W_1, \ldots, W_{i-1})$, i.e., $W_i = W^\prime$.

We apply Lemma \ref{L:cascade1} ``(i)", i.e., $\mathcal{H}(W_1, \ldots, W_{i-1})$ is a collection of  
$F(W_1, \ldots, W_{i-1})$ and $W_i$ is a t-critical set of $F(W_1, \ldots, W_{i-1})$. 
By Lemma \ref{L:trans.9}, $\mathcal{H}(W_1, \ldots, W_{i-1})$ is $W_i$-distributed and by 
Definition \ref{D:distributed2} and \ref{D:Hall2}, $F$ satisfies the generalized Hall condition.
\end{proof}  

When $(X,E)$ is transitive our main theorem from Section \ref{S:main} can be simplified.

\begin{theorem} \label{T:trans.1}
Let $(X,E)$ be transitive. The following statements are equivalent: 
\begin{enumerate}[(i)]
\item $F$ admits a Hall collection.
\item $F$ admits a disparate selection.
\end{enumerate}
\end{theorem}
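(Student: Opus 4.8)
The plan is to obtain both implications by stringing together results that are already in place, so that the theorem becomes essentially a one-line deduction; the real work has been carried out in Lemma~\ref{L:trans.11} and in Section~\ref{S:necessity}. In particular, transitivity of $(X,E)$ enters only through Lemma~\ref{L:trans.11}.

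For ``(i)~$\Rightarrow$~(ii)'' I would start from a Hall collection $\mathcal{H}$ of $F$. Because $(X,E)$ is transitive, Lemma~\ref{L:trans.11} applies and shows that $F$ satisfies the generalized Hall condition. Theorem~\ref{T:main} then produces a disparate selection of $F$. The point worth noting is that Lemma~\ref{L:trans.11} asks only for the existence of \emph{some} Hall collection (not a distributed one), which is exactly what ``(i)'' provides, so there is no hidden gap between the hypothesis and what the lemma needs.

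For ``(ii)~$\Rightarrow$~(i)'' I would take a disparate selection $s$ of $F$ and invoke Lemma~\ref{L:81}, which says that the induced collection $\mathcal{H}^s$ is a Hall collection of $F$; hence $F$ admits a Hall collection. This direction uses no transitivity at all — it is the ``easy half'' of a marriage-type statement, and Lemma~\ref{L:87} in fact gives the stronger conclusion that $\mathcal{H}^s$ is distributed.

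Since both halves are already available, the only obstacle is the one that has already been overcome inside Lemma~\ref{L:trans.11} (via the chain of transitivity lemmas \ref{L:trans.4}--\ref{L:trans.9} controlling $size$ and $hull$); at the level of the theorem itself there is nothing left to do but cite Lemma~\ref{L:trans.11}, Theorem~\ref{T:main}, and Lemma~\ref{L:81}.
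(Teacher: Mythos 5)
Your proposal is correct and follows the paper's own route exactly: the paper's proof is the one-line deduction "follows from Theorem~\ref{T:main} and Lemma~\ref{L:trans.11}", with the reverse direction obtained from the necessity half of Theorem~\ref{T:main} (equivalently Lemma~\ref{L:81}/\ref{L:87}), just as you describe. Your observation that Lemma~\ref{L:trans.11} requires only an arbitrary Hall collection, so there is no gap between hypothesis (i) and the lemma, is exactly the point that makes the deduction work.
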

\begin{proof}
The statement follows from Theorem \ref{T:main} and Lemma \ref{L:trans.11}
\end{proof} 

The characterization of Theorem \ref{T:trans.1} also had been obtained by Hilton and Johnson \cite{HJ2}
when every block of $(X,E)$ is a clique.

Lemma \ref{L:trans.11} also allows a new characterization of critical sets. We show that a minimal
generalized critical set is a minimal t-critical set for transitive problems, i.e., for minimal critical sets 
the converse of Lemma \ref{L:trans.1} becomes true.

\begin{lemma} \label{L:trans.12}
Let $(X,E)$ be transitive and let $W \subset X$ be a generalized critical set of $F$. There exists a 
t-critical set  $W^\prime \subset W$ of $F$.
\end{lemma}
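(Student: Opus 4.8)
The plan is to apply the already-established Lemma \ref{L:trans.11} in contrapositive form, not to $F$ on $(X,E)$ but to the submapping obtained by fixing the offending point, restricted to the subgraph induced on $W$; the $size$-characterization of Hall collections recorded after Definition \ref{D:size} then produces the desired t-critical subset. Concretely, since $W$ is a generalized critical set of $F$, Definition \ref{D:critical} yields a point $(x,y) \in (X \backslash W) \times Y$ such that $(F_{\{x\},\{(x,y)\}})_{\mid W}$ does not satisfy the generalized Hall condition. Two routine observations are needed first. The subgraph of $(X,E)$ induced on $W$ is again transitive: if $\{a,b\}$ and $\{b,c\}$ are edges with $a,b,c \in W$ and $a \ne c$, then transitivity of $(X,E)$ gives $\{a,c\} \in E$ with $a,c \in W$. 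Also, the disparateness relation on $W \times Y$ does not depend on whether edges are read in $E$ or in the induced subgraph, so $size$ over subsets of $W$ is unambiguous and restriction nests, i.e.\ $((F_{\{x\},\{(x,y)\}})_{\mid W})_{\mid W'} = (F_{\{x\},\{(x,y)\}})_{\mid W'}$ for $W' \subset W$.

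Next I would invoke Lemma \ref{L:trans.11}: for a transitive problem, possessing a Hall collection implies the generalized Hall condition, so contrapositively the transitive problem $(F_{\{x\},\{(x,y)\}})_{\mid W}$, which fails that condition, admits no Hall collection. By the $size$-characterization after Definition \ref{D:size}, applied with ambient vertex set $W$, there is then a set $W' \subset W$ with $size(G((F_{\{x\},\{(x,y)\}})_{\mid W'})) \le \sharp W' - 1$. Such a $W'$ is automatically nonempty, since $size$ takes nonnegative values whereas $\sharp \emptyset - 1 = -1$.

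Finally I would check that $W'$ witnesses t-criticality of $F$ in the sense of Definition \ref{D:t-critical}: as $x \notin W$ and $W' \subset W$ we have $(x,y) \in (X \backslash W') \times Y$, and the inequality just obtained is exactly the bound required of a t-critical set. I do not expect a genuine combinatorial obstacle; the only point that needs care is the bookkeeping of which ambient graph each notion (generalized Hall condition, Hall collection, $size$) is evaluated over, which is precisely what the opening observations settle.
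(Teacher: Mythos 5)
Your proposal is correct and follows essentially the same route as the paper: extract the witness $(x,y)$ from Definition \ref{D:critical}, apply Lemma \ref{L:trans.11} in contrapositive to conclude that $(F_{\{x\},\{(x,y)\}})_{\mid W}$ admits no Hall collection, and then read off a subset $W^\prime \subset W$ violating the $size$ inequality, which is exactly a t-critical set of $F$. Your additional bookkeeping (transitivity of the induced subgraph, nesting of restrictions, nonemptiness of $W^\prime$) is left implicit in the paper but is correct and harmless.
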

\begin{proof}
By assumption there exists $(x,y) \in (X \backslash W) \times Y$ such that 
$(F_{\{x\},\{(x,y)\}})_{\mid W}$ does not satisfy the generalized Hall condition. Using 
Lemma \ref{L:trans.11}, $(F_{\{x\},\{(x,y)\}})_{\mid W}$ does not admit a Hall collection, i.e., there exists a subset 
$W^\prime \subset W$ such that 
$size(G((F_{\{x\},\{(x,y)\}})_{\mid W^\prime})) \le \sharp W^\prime - 1$.
$W^\prime$ is a t-critical set of $F$.
\end{proof} 

\begin{lemma} \label{L:trans.13}
Let $(X,E)$ be transitive and let $W \subset X$. The following statements are equivalent:
\begin{enumerate}[(i)]
\item $W$ is a minimal generalized critical set of $F$.
\item $W$ is a minimal t-critical set of $F$.
\end{enumerate}
\end{lemma}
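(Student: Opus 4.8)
The plan is to derive both implications of Lemma \ref{L:trans.13} purely by combining the three preceding lemmas on t-critical sets: Lemma \ref{L:trans.1} (every t-critical set is a generalized critical set), Lemma \ref{L:trans.2} (a set which is simultaneously a minimal generalized critical set and a t-critical set is a minimal t-critical set), and Lemma \ref{L:trans.12} (in a transitive graph every generalized critical set contains a t-critical subset). No new computation with $size$ or $hull$ should be needed; the work has already been done in Lemmas \ref{L:trans.6}--\ref{L:trans.12}.

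For ``(i) $\Rightarrow$ (ii)'', I would start from a minimal generalized critical set $W$ of $F$. By Lemma \ref{L:trans.12} there is a t-critical set $W^\prime \subset W$ of $F$, and by Lemma \ref{L:trans.1} this $W^\prime$ is also a generalized critical set of $F$. Minimality of $W$ as a generalized critical set forces $W^\prime = W$, so $W$ is itself a t-critical set of $F$; being in addition a minimal generalized critical set, Lemma \ref{L:trans.2} yields that $W$ is a minimal t-critical set of $F$.

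For ``(ii) $\Rightarrow$ (i)'', I would take a minimal t-critical set $W$ of $F$. By Lemma \ref{L:trans.1} it is a generalized critical set of $F$, so it only remains to check minimality. Suppose $W^\prime \subset W$, $W^\prime \ne W$, were a generalized critical set of $F$; by Lemma \ref{L:trans.12} it would contain a t-critical set $W^{\prime\prime} \subset W^\prime \subsetneq W$, contradicting the minimality of $W$ among t-critical sets. Hence no such $W^\prime$ exists and $W$ is a minimal generalized critical set of $F$.

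I do not expect a genuine obstacle here: the only subtlety is bookkeeping with strict versus non-strict inclusions (ensuring $W^{\prime\prime} \ne W$ in the second implication), and the whole argument rests on Lemma \ref{L:trans.12}, which in turn depends on the transitive marriage characterization Lemma \ref{L:trans.11}. If anything needs care, it is simply making explicit that ``minimal'' is understood with respect to the same ambient mapping $F$ in all four notions being compared.
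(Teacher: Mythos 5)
Your proposal is correct and follows the paper's proof essentially verbatim: the forward direction uses Lemma \ref{L:trans.12} to extract a t-critical subset, Lemma \ref{L:trans.1} plus minimality to force it to equal $W$, and Lemma \ref{L:trans.2} to conclude; the converse uses Lemma \ref{L:trans.1} and then Lemma \ref{L:trans.12} applied to any generalized critical subset to rule out proper ones. No differences worth noting.
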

\begin{proof}
``(i) $\Rightarrow$ (ii)" Using Lemma \ref{L:trans.12}, there exists a t-critical set 
$W^\prime \subset W$ of $F$. By Lemma \ref{L:trans.1}, $W^\prime$ is a generalized critical set of $F$,
i.e., $W^\prime = W$ and $W$ is a t-critical set of $F$. The statement follows from Lemma 
\ref{L:trans.2}.  \\
``(ii) $\Rightarrow$ (i)"  Using Lemma \ref{L:trans.1}, $W$ is a generalized critical set of $F$. Let
$W^\prime \subset W$ be a generalized critical set of $F$. Using Lemma \ref{L:trans.12} there exists a 
subset $W^{\prime\prime} \subset W^\prime$ such that $W^{\prime\prime}$ is a t-critical set of $F$.
This implies $W^{\prime\prime} = W^\prime = W$, i.e., $W$ is a minimal generalized critical set of $F$.
\end{proof} 

Using Lemma \ref{L:trans.6} we derive a necessary condition for t-critical sets.

\begin{lemma} \label{L:trans.14}
Let $(X,E)$ be transitive and let $W \subset X$ be a t-critical set of $F$.
$size(G(F_{\mid W})) \le \sharp W$.
\end{lemma}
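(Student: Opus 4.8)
The plan is to simply chain the defining inequality of a t-critical set with the transitive bound already established in Lemma \ref{L:trans.6}. Since $W$ is a t-critical set of $F$, by Definition \ref{D:t-critical} there exists a point $(x,y) \in (X \backslash W) \times Y$ such that
\[
size(G((F_{\{x\},\{(x,y)\}})_{\mid W})) \le \sharp W - 1.
\]
First I would fix such an $(x,y)$.

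Next I would apply Lemma \ref{L:trans.6}, which is valid precisely because $(X,E)$ is transitive, $W$ is nonempty (being a t-critical set), and $(x,y) \in (X \backslash W) \times Y$. This gives
\[
size(G(F_{\mid W})) \le size(G((F_{\{x\},\{(x,y)\}})_{\mid W})) + 1 \le (\sharp W - 1) + 1 = \sharp W,
\]
which is the claim. There is essentially no obstacle here: the only nontrivial ingredient is Lemma \ref{L:trans.6}, whose proof already absorbed the combinatorial work (via Lemmas \ref{L:trans.4} and \ref{L:trans.5}), so the present statement is a one-line consequence once that lemma is in hand.
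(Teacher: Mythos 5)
Your proposal is correct and follows exactly the route the paper takes: the paper's proof is the single line ``Apply Lemma \ref{L:trans.6}'', and your argument simply spells out the chaining of Definition \ref{D:t-critical} with that lemma. Nothing is missing.
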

\begin{proof}
Apply Lemma \ref{L:trans.6}.
\end{proof} 

The condition of Lemma \ref{L:trans.14} is not sufficient for $W$ to be a t-critical set of $F$, 
in general.

If $W \subset X$ induces a complete subgraph of $(X,E)$ the size function can be replaced by the 
cardinality.

\begin{lemma} \label{L:trans.subgraph.1}
Let $(X,E)$ be transitive, let $W \subset X$ be a nonempty set such that the subgraph of $(X,E)$ induced 
by $W$ is complete and let $A \subset W \times Y$ be a disparate set.
\[
\sharp A = \sharp \{y \in Y \mid \mbox{there exists } x \in W \mbox{ such that } (x,y) \in A\}.
\]
\end{lemma}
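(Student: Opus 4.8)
The plan is to show that the projection $\pi\colon A \to Y$ given by $\pi(x,y)=y$ is injective. Once that is established, $\sharp A = \sharp \pi(A)$, and $\pi(A)$ is by definition exactly the set $\{y \in Y \mid \text{there exists } x \in W \text{ such that } (x,y)\in A\}$ appearing on the right-hand side, so the claimed equality follows immediately.

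To prove injectivity of $\pi$ restricted to $A$, I would take two points $(x,y),(x',y) \in A$ with the same second coordinate and show $x=x'$. Suppose not. Then $(x,y)\ne(x',y)$, and since $A$ is disparate, these two points are disparate, so by Definition \ref{D:disparate} the equality of second coordinates forces $\{x,x'\}\notin E$. On the other hand $x,x'\in W$ with $x\ne x'$, and the subgraph of $(X,E)$ induced by $W$ is complete, hence $\{x,x'\}\in E$. This contradiction shows $x=x'$, so $\pi|_A$ is injective.

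I do not expect any real obstacle here; the argument is essentially a one-line pigeonhole observation. The only point to be careful about is that distinctness of the two points, which is what licenses invoking disparateness, amounts precisely to $x\ne x'$ once the second coordinates are known to agree. I would also note in passing that the transitivity hypothesis is not actually used in this lemma: only the completeness of the induced subgraph on $W$ enters the proof; transitivity is presumably retained here merely because the lemma is stated as part of the transitive-problems section.
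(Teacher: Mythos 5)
Your proof is correct and is essentially the paper's own argument: the paper enumerates $A=\{(x_1,y_1),\ldots,(x_k,y_k)\}$ and shows $y_i\ne y_j$ for $i\ne j$ using disparateness plus completeness of the induced subgraph, which is exactly your injectivity of the projection onto $Y$. Your side remark that transitivity is not actually used is also accurate; the paper's proof does not invoke it either.
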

\begin{proof}
The statement is true if $A = \emptyset$ and it is clear that
\[
\sharp A \ge \sharp \{y \in Y \mid \mbox{there exists } x \in W \mbox{ such that } (x,y) \in A\}.
\]
Let $A \ne \emptyset$ and set $k = \sharp A \ge 1$. There exist $(x_i,y_i) \in W \times Y$, $i=1, \dots , k$ 
such that $A = \{(x_1,y_1), \ldots (x_k,y_k)\}$. Since $A$ is disparate and the subgraph of $(X,E)$ 
induced by $W$ is complete, $y_i \ne y_j$ for $1 \le i,j \le k$, $i \ne j$, i.e., 
$\sharp \{y_1, \ldots , y_k\} = k$ and the statement is proved.
\end{proof} 

\begin{lemma} \label{L:trans.subgraph.2}
Let $(X,E)$ be transitive and let $W \subset X$ be a nonempty set such that the subgraph of $(X,E)$ 
induced by $W$ is complete.
\begin{enumerate}[(i)]
\item $\sharp F(W) = size(G(F_{\mid W}))$.
\item $\sharp (F(W) \backslash \{ y \}) = size(G((F_{\{x\},\{(x,y)\}})_{\mid W}))$  \\
for each $(x,y) \in (X \backslash W) \times Y$.
\end{enumerate}
\end{lemma}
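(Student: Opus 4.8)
The whole argument goes through Lemma~\ref{L:trans.subgraph.1}, which says that a disparate subset of $W \times Y$ has exactly as many elements as it has distinct second coordinates. For part~(i) I would prove the two inequalities separately. For $size(G(F_{\mid W})) \ge \sharp F(W)$, choose for each $y \in F(W)$ a vertex $x_y \in W$ with $y \in F(x_y)$ and set $A = \{(x_y,y) \mid y \in F(W)\}$; then $A \subset G(F_{\mid W})$, distinct elements of $A$ have distinct second coordinates and are therefore disparate, so $A$ is a disparate set with $\sharp A = \sharp F(W)$. For the reverse inequality, let $A \subset G(F_{\mid W})$ be disparate with $\sharp A = size(G(F_{\mid W}))$; by Lemma~\ref{L:trans.subgraph.1} its cardinality equals the number of second coordinates occurring in $A$, each of which lies in $F(W)$, hence $\sharp A \le \sharp F(W)$.

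For part~(ii) I would apply part~(i) to the set-valued mapping $F' = (F_{\{x\},\{(x,y)\}})_{\mid W}$, which is defined on $W$ and for which $W$ still induces a complete subgraph, so that $size(G(F')) = \sharp F'(W)$ and it remains only to identify the image $F'(W)$. Since $(x,y) \in (X\backslash W) \times Y$ we have $x \notin W$, hence $W \subset X \backslash \{x\}$, and the definition of the complement mapping gives $G(F') = G(F_{\mid W}) \cap compl(\{(x,y)\})$; concretely, $y' \in F'(w)$ iff $y' \in F(w)$ and $(w,y')$ is disparate from $(x,y)$, and the latter fails precisely when $y' = y$ and $\{w,x\} \in E$. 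The structural point I would use here is that, by transitivity of $(X,E)$ together with completeness of the subgraph on $W$, the vertex $x$ is adjacent either to every vertex of $W$ or to no vertex of $W$ (from $\{w,x\} \in E$ and $\{w,w'\} \in E$ with $w' \ne x$ one obtains $\{w',x\} \in E$). In the first case $F'(w) = F(w) \backslash \{y\}$ for every $w \in W$, so $F'(W) = F(W) \backslash \{y\}$ and the claim follows from part~(i); the remaining case, in which $(x,y)$ is disparate from every point of $G(F_{\mid W})$ and $F' = F_{\mid W}$, is degenerate ($x$ having no edge to $W$) and I would dispatch it separately.

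The routine content is the disparateness bookkeeping in part~(i) and in checking when $(w,y')$ is disparate from $(x,y)$. The step I expect to need the most care is the image computation $F'(W) = F(W) \backslash \{y\}$ in part~(ii): the crux is recognising that the correct way to package transitivity with completeness of $W$ is the all-or-nothing adjacency of $x$ to $W$, after which the identity drops out of part~(i).
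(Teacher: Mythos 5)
Your argument for part~(i) is correct and complete: the two inequalities via Lemma~\ref{L:trans.subgraph.1} (counting second coordinates of a maximum disparate set, and building a disparate set by choosing one preimage $x_y$ for each $y \in F(W)$) are precisely the content that the paper compresses into the one-line remark that the lemma is a consequence of Lemma~\ref{L:trans.subgraph.1}. Likewise, your main line for part~(ii) is right: transitivity plus completeness of the subgraph on $W$ forces all-or-nothing adjacency of $x$ to $W$, and in the ``adjacent to all of $W$'' case one gets $F'(w) = F(w)\backslash\{y\}$ for every $w \in W$, hence $F'(W) = F(W)\backslash\{y\}$, and part~(i) applied to $F'$ finishes the job.

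The gap is exactly the case you deferred, and it cannot in fact be dispatched. If $x$ has no edge to any vertex of $W$, then for every $w \in W$ and $y' \in F(w)$ the points $(w,y')$ and $(x,y)$ are disparate (they are distinct, and even when $y'=y$ we have $\{w,x\}\notin E$), so $F' = F_{\mid W}$ and the right-hand side equals $\sharp F(W)$ by part~(i); but the left-hand side is $\sharp(F(W)\backslash\{y\}) = \sharp F(W) - 1$ whenever $y \in F(W)$. So the identity in part~(ii) is false in that case, i.e.\ the lemma as literally stated needs the additional hypothesis that $x$ is adjacent to (some, hence every) vertex of $W$. This is a defect of the statement rather than of your strategy --- the paper's own proof glosses over it as well --- and it is harmless downstream, since part~(ii) is only invoked in Lemma~\ref{L:hall.2}, where $(X,E)$ is complete and $x$ is automatically adjacent to all of $W$. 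But as a proof of the lemma as stated, your proposal stalls precisely at the case you flagged, and you should either add the adjacency hypothesis or restrict the claim accordingly.
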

\begin{proof}
This is  consequence of Lemma \ref{L:trans.subgraph.1}.
\end{proof}

                                        %
                                        %
\section{A Theorem of Ryser} \label{S:ryser}
We consider a Latin square of size $n$ with values in $\mathbf{N} = \{1, \ldots, n\}$ where in the upper 
left corner a rectangle of $r$ rows and $s$ columns is prepopulated in such a way that the values in
each row and each column are distinct. Ryser \cite{Rys} posed and answered the question if it is possible 
to extend this partial Latin square to a partial Latin square of size $r \times n$.

We derive a new proof of this result based on Theorem \ref{T:trans.1}. M. Hall \cite{HalM1} already 
proved that a partial Latin square of size $r \times n$ can always be completed to a Latin square 
of size $n \times n$.

We denote by $N(v)$ the number of occurences of a value $v \in \mathbf{N}$ in the rectangle and 
$R(v)$ denotes the set of all rows in the rectangle where the value $v$ is not contained in the rectangle. 
Then $N(v) + \sharp R(v) = r$ for each $v \in \mathbf{N}$. We define
\[  
X = \bigcup_{\substack{v \in \mathbf{N}, \\ R(v) \ne \emptyset}} (\{v\} \times R(v)),
Y = \{s+1, \ldots , n\},
F(x) = Y \mbox{ for } x \in X
\]
and $E = \{\{(v,r_1), (v,r_2)\} \subset X \mid v \in \mathbf{N}, r_1, r_2 \in R(v), r_1 \ne r_2 \}$.

This model is of the type introduced in Section \ref{S:disparate} and we show it satisfies the property
of Section \ref{S:transitive}.

\begin{lemma} \label{L:ryser.1}
$(X,E)$ is transitive.
\end{lemma}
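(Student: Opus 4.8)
The plan is to verify Definition~\ref{D:transitive} directly for the graph $(X,E)$ defined in this section. Recall that a vertex of $X$ is a pair $(v,r)$ with $v \in \mathbf{N}$ and $r \in R(v)$, and that $\{(v,r_1),(v,r_2)\} \in E$ precisely when the two vertices share the \emph{same} first coordinate $v$ and have distinct row indices $r_1 \ne r_2$. So I would take three vertices $x = (v,r_1)$, $x' = (v',r_2)$, $x'' = (v'',r_3)$ with $\{x,x'\} \in E$, $\{x',x''\} \in E$ and $x \ne x''$, and show $\{x,x''\} \in E$.

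First I would observe that $\{x,x'\} \in E$ forces $v = v'$ and $r_1 \ne r_2$, and similarly $\{x',x''\} \in E$ forces $v' = v''$ and $r_2 \ne r_3$. Chaining these equalities gives $v = v' = v''$, so $x$ and $x''$ already agree in their first coordinate. It then remains to check $r_1 \ne r_3$: but this is immediate, since $x \ne x''$ together with $v = v''$ forces the row indices to differ. Hence $\{x,x''\}$ is an edge of the prescribed form, and $(X,E)$ is transitive.

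There is essentially no obstacle here; the statement is an immediate bookkeeping consequence of the fact that $E$ consists exactly of pairs of distinct vertices sharing a first coordinate — an equivalence-like structure on each fibre $\{v\} \times R(v)$, which is automatically transitive in the sense of Definition~\ref{D:transitive}. The only point requiring a line of care is extracting $r_1 \ne r_3$ from the hypothesis $x \ne x''$, which uses that the first coordinates have already been identified. I would therefore write the proof as a short case-free argument deriving the three equalities $v = v' = v''$ and then the inequality $r_1 \ne r_3$ from $x \ne x''$.
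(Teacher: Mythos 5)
Your proof is correct and follows essentially the same route as the paper: unpack the definition of $E$ to get $v = v' = v''$, then use $x \ne x''$ together with the agreement of first coordinates to conclude the row indices differ, so $\{x,x''\} \in E$. No issues.
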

\begin{proof}
Let $x, x^\prime, x^{\prime\prime} \in X$, $\{x,x^\prime \} \in E$, 
$\{x^\prime,x^{\prime\prime}\} \in E$ and $x \ne x^{\prime\prime}$. By definition of X
there exist $v, v^\prime, v^{\prime\prime} \in \mathbf{N}$, $r \in R(v)$, $r^\prime \in R(v^\prime)$
and $r^{\prime\prime} \in R(v^{\prime\prime})$ such that $x=(v,r)$, 
$x^\prime=(v^\prime,r^\prime)$, $x^{\prime\prime}=(v^{\prime\prime},r^{\prime\prime})$.
By definition of $E$, $v=v^\prime$ and $v^\prime = v^{\prime\prime}$. This shows
$x^{\prime\prime}=(v,r^{\prime\prime})$, $r^{\prime\prime} \in R(v)$ and
$r \ne r^{\prime\prime}$, i.e., $\{x,x^{\prime\prime} \} = \{(v,r), (v,r^{\prime\prime})\} \in E$.
\end{proof} 

We identify the extension of the partial Latin square with a disparate selection of the set-valued mapping 
$F$. By definition of our model the $r \times s$-rectangle can be extended if and only if $F$ admits a
selection $s$ such that $s(v,r_1) \ne s(v,r_2)$ for each $v \in \mathbf{N}$ and $r_1, r_2 \in R(v)$,
$r_1 \ne r_2$. By definition of $E$ this is true if and only if $s$ is disparate. The value $v$ is placed 
in row $r$ and column $s(v,r)$ for each $(v,r) \in X$.

\begin{lemma} \label{L:ryser.2}
The following statements are equivalent: 
\begin{enumerate}[(i)]
\item $F$ admits a Hall collection.
\item $N(v) \ge r+s-n$ for each $v \in \mathbf{N}$. 
\end{enumerate}
\end{lemma}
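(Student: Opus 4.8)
The plan is to exploit the characterization, recorded just after Definition~\ref{D:size}, that $F$ admits a Hall collection if and only if $size(G(F_{\mid W})) \ge \sharp W$ for every nonempty $W \subset X$. Since $F(x) = Y$ for all $x \in X$, we have $G(F_{\mid W}) = W \times Y$, so the task reduces to computing $size(W \times Y)$. First I would describe the structure of $(X,E)$: by definition of $E$, two vertices $(v_1,r_1),(v_2,r_2) \in X$ are joined precisely when $v_1 = v_2$, so $(X,E)$ is the disjoint union, over the values $v \in \mathbf{N}$ with $R(v) \ne \emptyset$, of cliques on the vertex sets $\{v\} \times R(v)$, each of cardinality $\sharp R(v) = r - N(v)$. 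For a nonempty $W \subset X$ write $W_v = W \cap (\{v\} \times R(v))$ and $S(W) = \{ v \in \mathbf{N} \mid W_v \ne \emptyset \}$; then $\sharp W = \sum_{v \in S(W)} \sharp W_v$, each $W_v$ induces a complete subgraph of $(X,E)$, and there are no edges joining distinct $W_v$'s.

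Next I would prove the key identity $size(W \times Y) = \sharp Y \cdot \sharp S(W) = (n-s)\,\sharp S(W)$. By Definition~\ref{D:disparate}, a set $A \subset W \times Y$ is disparate exactly when, for every $y \in Y$, the slice $\{ x \in W \mid (x,y) \in A \}$ is an independent set of the subgraph of $(X,E)$ induced by $W$ (no two of its points lie in a common $W_v$); hence $\sharp A \le \sum_{y \in Y} \sharp S(W) = (n-s)\,\sharp S(W)$, and equality is realised by fixing a representative $x_v \in W_v$ for each $v \in S(W)$ and taking $A = \{ (x_v,y) \mid v \in S(W),\ y \in Y \}$, which is disparate. (Equivalently, the disjoint decomposition $W \times Y = \bigcup_{v \in S(W)} (W_v \times Y)$ has the property that points from distinct blocks are automatically disparate, so $size(W \times Y) = \sum_{v \in S(W)} size(W_v \times Y)$, and $size(W_v \times Y) = \sharp F(W_v) = \sharp Y$ by Lemma~\ref{L:trans.subgraph.2}(i) since $W_v$ induces a complete subgraph.)

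Finally I would assemble the equivalence. By the characterization above, statement~(i) holds iff $(n-s)\,\sharp S(W) \ge \sharp W$ for every nonempty $W \subset X$. For ``(ii)~$\Rightarrow$~(i)'': if $N(v) \ge r+s-n$ for all $v$, then $\sharp W_v \le \sharp R(v) = r - N(v) \le n-s$ for each $v$, whence $\sharp W = \sum_{v \in S(W)} \sharp W_v \le (n-s)\,\sharp S(W)$. For ``(i)~$\Rightarrow$~(ii)'': fix $v_0 \in \mathbf{N}$; if $R(v_0) = \emptyset$ then $N(v_0) = r \ge r+s-n$ since $s \le n$, while if $R(v_0) \ne \emptyset$ then $W = \{v_0\} \times R(v_0)$ is a nonempty subset of $X$ with $\sharp S(W) = 1$ and $\sharp W = \sharp R(v_0)$, so~(i) gives $n-s \ge \sharp R(v_0) = r - N(v_0)$, i.e.\ $N(v_0) \ge r+s-n$. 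I expect no genuine obstacle; the only point needing a little care is the computation of $size(W \times Y)$, i.e.\ noticing that disparateness constrains points only within a common colour class, so that $size$ factors as the number of colours times the independence number of the subgraph on $W$, which here is simply the number of cliques that $W$ meets.
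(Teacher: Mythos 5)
Your proof is correct and follows essentially the same route as the paper: both reduce (i) to the condition $size(G(F_{\mid W}))\ge \sharp W$ for all nonempty $W$, both use the single clique $W=\{v\}\times R(v)$ together with $\sharp Y=n-s$ for ``(i) $\Rightarrow$ (ii)'', and your witness set $\{(x_v,y)\mid v\in S(W),\,y\in Y\}$ is exactly the paper's $A=(\bigcup_{v\in J}\{(v,r_v)\})\times Y$ for ``(ii) $\Rightarrow$ (i)''. The only cosmetic difference is that you package the computation as an exact formula $size(W\times Y)=(n-s)\,\sharp S(W)$ (your slice argument replaying Lemma \ref{L:trans.subgraph.1}), whereas the paper only records the two inequalities it needs.
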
 
\begin{proof}
``(i) $\Rightarrow$ (ii)" Let $v \in \mathbf{N}$. We distinguish two cases. \\
Case 1: $R(v) = \emptyset$. \\
Then  $r - N(v) = \sharp R(v) = 0 \le n-s$. \\
Case 2: $R(v) \neq \emptyset$. \\
Define $W = \{v\} \times R(v)$. By definition of $E$, $W$ induces a complete subgraph of $(X,E)$.
Using ``(i)" and Lemma \ref{L:trans.subgraph.2} ``(i)" we obtain 
$r - N(v) 
 = \sharp R (v) 
 = \sharp W 
 \le size(G(F_{\mid W})) 
 = \sharp F(W) 
 \le \sharp Y 
 = n-s$.  \\ 
``(ii) $\Rightarrow$ (i)"  Let $W \subset X$ be a nonempty set. There exists a minimal subset 
$J \subset \mathbf{N}$ such that $W \subset \bigcup_{v \in J} (\{v\} \times R(v))$. Choose a row 
$r_v \in R(v)$ such that $(v,r_v) \in W$ for each $v \in J$ and define 
$A = ( \bigcup_{v \in J} \{(v,r_v)\}) \times Y$.
Then $A \subset G(F_{\mid W})$ is a disparate set and we obtain
\[
\sharp A
= \sharp Y \cdot \sharp J 
=  (n - s) \cdot \sharp J 
\ge \sum_{v \in J} (r - N(v)) 
\]
\[
= \sum_{v \in J} \sharp R (v) 
= \sum_{v \in J} \sharp (\{v\} \times R(v)) 
\ge \sharp W.
\]
\end{proof}

The implication ``(i) $\Rightarrow$ (ii)" of Lemma \ref{L:ryser.2} also had been proved by Bobga 
and Johnson \cite{Bob3} (crediting the proof to Hilton and Johnson \cite{HJ1}).  
The implication ``(ii) $\Rightarrow$ (i)" can be derived from Ryser's result.

With these preliminaries we are able to derive the result of Ryser \cite{Rys} from our general result.

\begin{theorem}[Ryser \cite{Rys}] \label{T:ryser.1}
We consider a Latin square of size $n$ with values in $\mathbf{N}$ where in the upper left corner 
a rectangle of $r$ rows and $s$ columns is prepopulated in such a way that the values in each 
row and each column are distinct. The following statements are equivalent: 
\begin{enumerate}[(i)]
\item The Latin rectangle can be extended to a Latin rectangle of size $r \times n$.
\item $N(v) \ge r+s-n$ for each $v \in \mathbf{N}$. 
\end{enumerate}
\end{theorem}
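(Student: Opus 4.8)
The plan is to read the theorem off by chaining together the results already assembled for transitive problems; no fresh combinatorics is needed beyond what the preceding lemmas supply. First I would invoke the translation of the Latin-rectangle question into the language of set-valued mappings that is set up in the paragraph preceding Lemma~\ref{L:ryser.2}: extending the prepopulated $r\times s$ rectangle to a Latin rectangle of size $r\times n$ amounts exactly to choosing, for every value $v$ and every row $r'\in R(v)$, a free column $s(v,r')\in Y=\{s+1,\ldots,n\}$ so that $s(v,r_1)\ne s(v,r_2)$ whenever $r_1,r_2\in R(v)$ with $r_1\ne r_2$; by the definition of $E$ this says precisely that $s$ is a disparate selection of $F$. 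Hence statement~(i) is equivalent to the assertion that $F$ admits a disparate selection.

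Second, I would bring in transitivity. By Lemma~\ref{L:ryser.1} the graph $(X,E)$ is transitive, so Theorem~\ref{T:trans.1} applies and yields that $F$ admits a disparate selection if and only if $F$ admits a Hall collection. This is the one place where the whole development of Section~\ref{S:transitive} (and, underneath it, the generalized marriage theorem) is actually cashed in.

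Third, I would quote Lemma~\ref{L:ryser.2}, which identifies the existence of a Hall collection of $F$ with the arithmetic condition $N(v)\ge r+s-n$ for every $v\in\mathbf{N}$, i.e.\ statement~(ii). Concatenating the three equivalences, (i) $\Leftrightarrow$ ``$F$ has a disparate selection'' $\Leftrightarrow$ ``$F$ has a Hall collection'' $\Leftrightarrow$ (ii), completes the proof.

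Since every step is a direct citation, there is no genuine ``hard part'' internal to this theorem: the substance was spent in proving Lemma~\ref{L:ryser.1}, Lemma~\ref{L:ryser.2}, and Theorem~\ref{T:trans.1}. The only point to watch is the first equivalence, namely that a disparate selection encodes, for each value $v$, an injective assignment of the rows missing $v$ to the free columns, and that the values $v$ with $R(v)=\emptyset$ contribute nothing to $X$ and therefore impose no constraint — these are exactly the $v$ covered by the trivial ``Case~1'' in the proof of Lemma~\ref{L:ryser.2}, so they need no separate treatment here.
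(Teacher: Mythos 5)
Your proof is correct and follows exactly the paper's own route: the paper's proof is the one-line chain ``Lemma~\ref{L:ryser.1} and Lemma~\ref{L:ryser.2}, then apply Theorem~\ref{T:trans.1}'', which is precisely what you spell out. The extra care you take with the identification of (i) with the existence of a disparate selection matches the discussion the paper places just before Lemma~\ref{L:ryser.2}.
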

\begin{proof}
Using Lemma \ref{L:ryser.1} and \ref{L:ryser.2} we can apply Theorem \ref{T:trans.1}.
\end{proof}

                                        %
                                        %
\section{Theorem of Hall} \label{S:Hall3}
In this section we describe the classical marriage theorem of P. Hall \cite{HalP} in our terminology.
This theorem is part of our model when $(X,E)$ is complete. If $(X,E)$ is complete, $(X,E)$ is transitive
and each disparate selection describes a complete system of distinct representatives.

\begin{theorem}[P. Hall \cite{HalP}]  \label{T:hall.1}
Let $(X,E)$ be complete. The following statements are equivalent: 
\begin{enumerate}[(i)]
\item $\sharp F(W) \ge \sharp W$ for each $W \subset X$.
\item $F$ admits a disparate selection.
\end{enumerate}
\end{theorem}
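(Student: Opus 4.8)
The plan is to derive this as an immediate consequence of Theorem \ref{T:trans.1} together with the characterization of Hall collections by the $size$ function. First I would observe that a complete graph $(X,E)$ is transitive in the sense of Definition \ref{D:transitive}: if $\{x,x'\}\in E$, $\{x',x''\}\in E$ and $x\ne x''$, then $\{x,x''\}\in E$ automatically, since every pair of distinct vertices of a complete graph forms an edge. Hence Theorem \ref{T:trans.1} is applicable and tells us that statement (ii), the existence of a disparate selection, is equivalent to the existence of a Hall collection of $F$.

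Next I would translate ``$F$ admits a Hall collection'' into the numerical condition (i). Recall from the remark following Definition \ref{D:size} that $F$ admits a Hall collection if and only if $size(G(F_{\mid W}))\ge\sharp W$ for every $W\subset X$. Since $(X,E)$ is complete, every nonempty $W\subset X$ induces a complete subgraph of $(X,E)$, so Lemma \ref{L:trans.subgraph.2}(i) yields $\sharp F(W)=size(G(F_{\mid W}))$; for $W=\emptyset$ both quantities are $0$. Therefore the Hall-collection condition is exactly the inequality $\sharp F(W)\ge\sharp W$ for each $W\subset X$, which is statement (i). Chaining the two equivalences gives (i) $\iff$ ``$F$ admits a Hall collection'' $\iff$ (ii).

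I do not expect a genuine obstacle in this last deduction: all the substance has been absorbed into Theorem \ref{T:trans.1} (whose proof rests on Lemma \ref{L:trans.11}) and into Lemma \ref{L:trans.subgraph.2}. The only point deserving a moment's care is the bookkeeping for the degenerate cases --- the empty set $W$, and, if one wishes to be scrupulous, $\sharp X\le 1$ --- but these are disposed of by noting that both conditions then hold vacuously. So the difficulty, such as it is, lies upstream in the transitive machinery rather than in assembling this final corollary.
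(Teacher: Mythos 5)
Your proposal is correct and follows exactly the paper's own route: completeness gives transitivity, Theorem \ref{T:trans.1} reduces (ii) to the existence of a Hall collection, and Lemma \ref{L:trans.subgraph.2}~(i) together with the remark after Definition \ref{D:size} converts that into condition (i). The paper's proof is just a terser statement of the same argument.
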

\begin{proof}
The statement follows from Theorem \ref{T:trans.1} when $(X,E)$ is complete and Lemma 
\ref{L:trans.subgraph.2} ``(i)".
\end{proof} 

If $(X,E)$ is complete, t-critical sets of $F$ can be defined in a manner they had been used 
by Halmos and Vaughan \cite{HV} in their proof of the marriage theorem.

\begin{lemma} \label{L:hall.2}
Let $(X,E)$ be complete and $W \subset X$. The following statements are equivalent: 
\begin{enumerate}[(i)]
\item $W$ is a t-critical set of $F$.
\item $W \ne \emptyset$, $W \ne X$ and $\sharp F(W) \le \sharp W$.
\end{enumerate}
\end{lemma}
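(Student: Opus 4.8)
The plan is to reduce both conditions to elementary statements about the cardinality of $F(W)$ by invoking Lemma \ref{L:trans.subgraph.2}. This is available because a complete graph $(X,E)$ is transitive and every nonempty $W \subset X$ induces a complete subgraph of $(X,E)$; hence Lemma \ref{L:trans.subgraph.2} gives the two identities $\sharp F(W) = size(G(F_{\mid W}))$ and $\sharp (F(W) \backslash \{y\}) = size(G((F_{\{x\},\{(x,y)\}})_{\mid W}))$ for every $(x,y) \in (X \backslash W) \times Y$. Once these substitutions are made, the equivalence in Definition \ref{D:t-critical} becomes a one-line counting argument in each direction.

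For ``(i) $\Rightarrow$ (ii)'': if $W$ is a t-critical set then $W \ne \emptyset$ by definition, and there is a witness $(x,y) \in (X \backslash W) \times Y$; the existence of such a point forces $X \backslash W \ne \emptyset$, i.e. $W \ne X$. Rewriting the defining inequality $size(G((F_{\{x\},\{(x,y)\}})_{\mid W})) \le \sharp W - 1$ via Lemma \ref{L:trans.subgraph.2} as $\sharp (F(W) \backslash \{y\}) \le \sharp W - 1$ and using $\sharp F(W) \le \sharp (F(W) \backslash \{y\}) + 1$, we obtain $\sharp F(W) \le \sharp W$.

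For ``(ii) $\Rightarrow$ (i)'': since $W \ne X$ pick $x \in X \backslash W$; it then suffices to produce $y \in Y$ with $\sharp (F(W) \backslash \{y\}) \le \sharp W - 1$, because Lemma \ref{L:trans.subgraph.2} converts this into the defining condition for $W$ to be t-critical with witness $(x,y)$, and $W \ne \emptyset$ is part of the hypothesis. If $F(W) \ne \emptyset$, any $y \in F(W) \subset Y$ works, since then $\sharp (F(W) \backslash \{y\}) = \sharp F(W) - 1 \le \sharp W - 1$. If $F(W) = \emptyset$, any $y \in Y$ works, since $\sharp (F(W) \backslash \{y\}) = 0 \le \sharp W - 1$ as $\sharp W \ge 1$ (here one only needs $Y \ne \emptyset$, which may be assumed, since otherwise $F$ has empty images and the statement is degenerate).

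The proof is essentially bookkeeping, so there is no substantial obstacle; the only points requiring care are getting the direction of the $\pm 1$ right in each inequality and verifying that the side conditions $W \ne \emptyset$ and $W \ne X$ are precisely what guarantees the existence of a legitimate witness point $(x,y) \in (X \backslash W) \times Y$.
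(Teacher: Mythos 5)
Your proof is correct and follows essentially the same route as the paper: both reduce the defining condition for a t-critical set to a statement about $\sharp(F(W)\backslash\{y\})$ via Lemma \ref{L:trans.subgraph.2} ``(ii)'' and then do the elementary counting. Your version is slightly more careful in spelling out the choice of witness $y$ (including the degenerate case $F(W)=\emptyset$), which the paper leaves implicit.
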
 
\begin{proof}
Using Lemma \ref{L:trans.subgraph.2} ``(ii)", $W$ is a t-critical set of $F$ if and only if $W \ne \emptyset$ and 
there exists $(x,y) \in (X \backslash W) \times Y$ such that 
$\sharp (F(W) \backslash \{ y \}) \le \sharp W - 1$. This is true if and only if $W \ne \emptyset$,
$W \ne X$ and $\sharp F(W) \le \sharp W$.
\end{proof}

                                        %
                                        %

                                        %
                                        %

\end{document}